\documentclass[11pt]{article}

\usepackage[a4paper, hmargin={2.8cm, 2.8cm}, vmargin={2.5cm, 2.5cm}]{geometry}  % Geometri-pakke: Styrer bl.a. marginer 
%%%%%%%%%%%%%%%%%%%%%%%%%%%%%%%%%%%%%%%%%%%%%%%%%%%%%%%%%%%%%%%%%%%%%%%%%

%%%%%%%%%%%%%%%%%%%%%%%%%%%%%%%%%%%%%%%%%%%%%%%%%%%%%%%%%%%%%%%%%%
%%%                          Packages                          %%%
%%%%%%%%%%%%%%%%%%%%%%%%%%%%%%%%%%%%%%%%%%%%%%%%%%%%%%%%%%%%%%%%%%
\usepackage[usenames]{color}
\usepackage[english]{babel}
\usepackage[utf8]{inputenc}
\usepackage{amsmath,amssymb,amsfonts,amsthm,amscd} % Better mathematics
\usepackage[numbers]{natbib}
\usepackage[mathscr]{euscript}
\usepackage{nicefrac}         % Include external graphics files
\usepackage{enumerate,graphicx,nicefrac}
\usepackage{paralist}
\usepackage{fancyhdr}
\usepackage{wrapfig}
\usepackage[all]{xy}
\usepackage{braket}
\usepackage{titlesec}
\usepackage{mathrsfs}
\usepackage{tikz,tikz-cd}
\usetikzlibrary{arrows}
\usepackage[page]{appendix}
\usepackage{comment}
\usepackage[nottoc,numbib]{tocbibind}
\usepackage{slashed}
\titlespacing*{\chapter}{0pt}{0pt}{40pt}
%\numberwithin{equation}{section}
%\setcounter{secnumdepth}{-2}
\usepackage{hyperref}
\usepackage{multirow} 

\usepackage{ifpdf}
\ifpdf
	\usepackage{epstopdf}
\fi
%%%%%%%%%%%%%%%%%%%%%%%%%%%%%%%%%%%%%%%%%%%%%%%%%%%%%%%%%%%%%%%%%%%%%%%%%%%%%

%%%%%%%%%%%%%%%%%%%%%%%%%%%%%%%%%%%%%%%%%%%%%%%%%%%%%%%%%%%%%%%%%%
%%%                  New commands and the like                 %%%
%%%%%%%%%%%%%%%%%%%%%%%%%%%%%%%%%%%%%%%%%%%%%%%%%%%%%%%%%%%%%%%%%%
\theoremstyle{plain}			%Italic text
	\newtheorem{theorem}{Theorem}
	\newtheorem{corollary}[theorem]{Corollary}
	\newtheorem{lemma}[theorem]{Lemma}
	\newtheorem{proposition}[theorem]{Proposition}

\theoremstyle{definition}		%Nonitalic text
	\newtheorem{definition}[theorem]{Definition}
	
	\newtheorem{example}[theorem]{Example}

\numberwithin{theorem}{section} % important bit

\theoremstyle{plain}			%Italic text
	\newtheorem*{theorem*}{Theorem}
	\newtheorem*{corollary*}{Corollary}
	\newtheorem*{lemma*}{Lemma}
	\newtheorem*{definition*}{Definition}
	\newtheorem*{proposition*}{Proposition}
	
\theoremstyle{definition}		%Nonitalic text
	\newtheorem*{problem*}{Problem}
	\newtheorem*{example*}{Example}
	\newtheorem*{remark*}{Remark}

%%%%%%%%%%%%%%%%%%%%%%%%%%%%%%%%%%%%%%%%%%%%%%%%%%%%%%%%%%%%%%%%%%
%%%											 				   %%%
%%%%%%%%%%%%%%%%%%%%%%%%%%%%%%%%%%%%%%%%%%%%%%%%%%%%%%%%%%%%%%%%%%

\newcommand{\m}[1]{\mathbb{#1}}
\newcommand{\NN}{{\m{N}}}

\newcommand{\ZZ}{{\m{Z}}}

\newcommand{\CC}{{\m{C}}}

\newcommand{\defeq}{\mathrel{\mathop:}=}

\providecommand{\norm}[1]{\left\Vert#1\right\Vert}

\begin{document}
\title{Faithful tracial states on quotients of $C^{\ast}$-algebras}

\author{Henning Olai Milhøj}

\maketitle 

\begin{abstract} 
\noindent We study the existence of faithful tracial states on $C^{\ast}$-algebras as well as the stronger proerty that all quotients admit faithful tracial states. We provide a sufficient and necessary condition for when $C^{\ast}$-algebras admit faithful tracial states in terms of the Cuntz semigroup and use this to give an equivalent formulation for all quotients to admit faithful tracial states. We relate this to the notion of strong quasidiagonality, and show that any amenable discrete group with faithful tracial states on all quotients of the corresponding group-$C^{\ast}$-algebra is strongly quasidiagonal under the condition that all quotients satisfy the Universal Coefficient Theorem.
\end{abstract}

\section{Introduction}
A \emph{tracial state} on a $C^{\ast}$-algebra $A$ is a positive linear functional $\tau$ on $A$ with $\norm{\tau}=1$ and $\tau(ab)=\tau(ba)$ for all $a,b\in A$. Tracial states show up in many different areas of $C^{\ast}$-algebras, and while in many cases it is obvious whether or not a given $C^{\ast}$-algebra admits a tracial state or not, it is still a source of current research to decide whether some specific $C^{\ast}$-algebras admit tracial states, for example within the field of quantum information theory \citep[Section 5]{Paulsen-QIT-2016}. Combining the works of Blackadar-Handelman and Haagerup, we know that any unital, stably finite and exact $C^{\ast}$-algebra necessarily admits a tracial state \citep{Blackadar-Handelman-1982, Haagerup-Quasitraces-2014}. It is fairly easy via a Hahn-Banach argument to see that a unital $C^{\ast}$-algebra fails to admit a tracial state if and only if the norm-closure of the span of additive commutators is dense. This condition has been improved by Haagerup \citep[Lemma 2.1]{Haagerup-Quasitraces-2014} and Pop \citep[Theorem 1]{Pop-2001}. A tracial state is said to be \emph{faithful} if $\tau(a^{\ast}a)=0$ if and only if $a=0$. Examples of $C^{\ast}$-algebras with faithful tracial states include reduced $C^{\ast}$-algebras of discrete groups, separable residually finite-dimensional $C^{\ast}$-algebras, and, by Blackadar-Handelman-Haagerup, simple unital stably finite exact $C^{\ast}$-algebras. While there are conditions ensuring the existence of faithful tracial states, see e.g.\ Proposition \ref{prop:cuntz-pedersen} below, these are not always easy to use. There are interesting classes of $C^{\ast}$-algebras including full group $C^{\ast}$-algebras and maximal tensor products of $C^{\ast}$-algebras, where it is difficult to decide if they admit faithful tracial states, and where the existence or non-existence of faithful tracial states has profound implications, see e.g.\ Proposition \ref{prop:max-min-faithful-tracial-state} and the subsequent discussion. \\ \\
We begin our paper in Section 2 by reviewing mostly known results regarding existence of separating families of tracial states. In Section 3, we examine when all quotients of a $C^{\ast}$-algebras admit faithful tracial states, which we coin the QFTS property. We prove that, under certain conditions, this property is equivalent to having no stable intermediate quotients. This can be seen as a converse to the well-known result that admitting stable ideals (or, in general, stable $C^{\ast}$-subalgebras) is an obstruction to admitting a faithful tracial state. Lastly, in Section 4, we extend a result due to Li \citep{Li-2019-Arxiv} that quasidiagonality and strong quasidiagonality coincide for the class of just-infinite $C^{\ast}$-algebras introduced by Grigorchuk-Musat-Rørdam \citep{Grigorchuk-Musat-Rordam-2018}.
\section{Separating families of tracial states on $C^{\ast}$-algebras}
This section is an overview of the mostly known results regarding separating families of tracial states. Let $A$ be a unital $C^{\ast}$-algebra and denote by $T(A)$ the Choquet simplex of tracial states on $A$.  For a tracial state $\tau\in T(A)$, we denote by $I_{\tau}$ the \emph{trace-kernel} ideal consisting of all $a\in A$ with $\tau(a^{\ast}a)=0$. Note that $I_{\tau}$ is a closed two-sided ideal in $A$, and that $\tau$ is faithful if and only if $I_{\tau}=\{0\}$. We say that $A$ has a \emph{separating family of tracial states} if, for any positive non-zero $a\in A$, there exists $\tau\in T(A)$ such that $\tau(a)\neq 0$ or, equivalently, if $\bigcap_{\tau \in T(A)}I_{\tau}=\{0\}$. In general, admitting a faithful tracial state clearly implies having a separating family of tracial states, and the two properties are equivalent for separable, unital $C^{\ast}$-algebras:
\begin{proposition} \label{prop:faithful-trace-iff-separating-family}
Let $A$ be a separable, unital $C^{\ast}$-algebra. The following are equivalent.
\begin{enumerate}
\item $A$ admits a faithful tracial state,
\item $A$ has a separating family of tracial states,
\item each non-zero ideal in $A$ admits a non-zero bounded positive trace.
\end{enumerate}
\end{proposition}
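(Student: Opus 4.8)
The plan is to prove the cycle of implications $(i) \Rightarrow (ii) \Rightarrow (iii) \Rightarrow (i)$. The implication $(i) \Rightarrow (ii)$ is immediate: if $\tau$ is a faithful tracial state and $a \in A$ is positive and non-zero, then $\tau(a) = \tau\bigl((a^{1/2})^{\ast} a^{1/2}\bigr) > 0$ by faithfulness, so $T(A)$ separates positive elements. For $(ii) \Rightarrow (iii)$, let $J \subseteq A$ be a non-zero ideal and pick a positive non-zero $a \in J$. By the separating property there is $\tau \in T(A)$ with $\tau(a) \neq 0$; the restriction $\tau|_J$ is then a non-zero bounded positive trace on $J$ (it is a trace because $\tau$ is, and it is bounded with norm at most $1$). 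So the only real content is $(iii) \Rightarrow (i)$, where separability will be used in an essential way.

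For $(iii) \Rightarrow (i)$, the strategy is to build a faithful tracial state as a countable convex combination of tracial states whose trace-kernel ideals shrink to $\{0\}$. First I would fix a countable dense subset $\{a_n\}_{n \in \NN}$ of the positive part of the unit ball of $A$. The key construction is recursive: having chosen tracial states $\tau_1, \dots, \tau_{k}$, look at the ideal $I_k = I_{\tau_1} \cap \dots \cap I_{\tau_k}$; if $I_k = \{0\}$ we are essentially done, otherwise $I_k$ is a non-zero ideal, so by $(iii)$ it carries a non-zero bounded positive trace, which after normalisation and extension to a tracial state $\tau_{k+1}$ on $A$ (using that a bounded trace on an ideal of a unital $C^{\ast}$-algebra extends to a tracial state on $A$, e.g.\ via the quotient $A/I$ or a Hahn--Banach/GNS argument) strictly decreases the intersection. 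To make the intersection actually reach $\{0\}$ rather than merely decrease, I would at stage $k$ additionally arrange, using the dense sequence, that if $a_k \notin I_{k-1}$ then $\tau_k(a_k) \neq 0$ as well — concretely, apply $(iii)$ to a suitable non-zero positive element constructed from $a_k$ together with the constraint from $I_{k-1}$, or interleave the two requirements. Then set $\tau = \sum_{k} 2^{-k} \tau_k$, which is a tracial state since $T(A)$ is convex and closed under norm limits of such series. Finally I would check $I_\tau = \bigcap_k I_{\tau_k} = \{0\}$: if $a \in A$ is positive, non-zero, then some $a_n$ is close to $a/\norm{a}$, hence $a \notin I_{\tau_n}$ by the construction, so $\tau_n(a) > 0$ and therefore $\tau(a) \geq 2^{-n}\tau_n(a) > 0$; thus $\tau$ is faithful.

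The main obstacle I anticipate is the bookkeeping that guarantees the intersection $\bigcap_k I_{\tau_k}$ is genuinely $\{0\}$ and not just a decreasing chain with non-zero intersection. Simply decreasing the intersection at each step is not enough, since a decreasing sequence of non-zero ideals can have non-zero (even large) intersection; this is exactly where separability enters, via the dense sequence $\{a_n\}$, and the recursion must be set up so that every $a_n$ is ``killed'' (i.e.\ made non-trace-zero) at a finite stage. A clean way to handle this is: at step $n$, if $a_n \notin \bigcap_{j<n} I_{\tau_j}$ already, do nothing new; otherwise note $a_n$ lies in the non-zero ideal generated within $\bigcap_{j<n}I_{\tau_j}$ — wait, that is not quite right since $a_n$ itself need not be in that ideal — so instead one should enumerate pairs and apply $(iii)$ to the ideal $\bigcap_{j<n} I_{\tau_j}$ to get a trace not vanishing on a chosen positive element of that ideal, and separately ensure density is used to conclude faithfulness; I would reconcile this by choosing the dense sequence inside each relevant ideal adaptively, or more simply by the argument in the previous paragraph where faithfulness is checked \emph{after} the construction using that any positive non-zero $a$ fails to lie in some $I_{\tau_n}$ because $I_{\tau_n}$, being a proper ideal missing an element close to $a$, cannot contain $a$. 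Making this last step airtight — that a positive non-zero $a$ is not in $I_{\tau_n}$ for some $n$ — requires that at stage $n$ we specifically used $a_n$ to pick $\tau_n$, so I will phrase the recursion to interleave ``shrink the intersection'' and ``handle $a_n$'' steps, which is routine once stated carefully.
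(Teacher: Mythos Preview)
Your cycle $(i)\Rightarrow(ii)\Rightarrow(iii)$ is fine, and the overall shape of $(iii)\Rightarrow(i)$ --- build $\tau=\sum_n 2^{-n}\tau_n$ from a dense sequence $(a_n)$ in $A_+$ --- is also correct. The problem is the step where you conclude $\bigcap_n I_{\tau_n}=\{0\}$. You argue: given positive non-zero $a$, pick $a_n$ close to $a/\norm{a}$; since $\tau_n(a_n)>0$ we have $a_n\notin I_{\tau_n}$, ``hence $a\notin I_{\tau_n}$''. That last inference is invalid. Ideals are closed, not open: $I_{\tau_n}$ can contain $a$ while excluding elements arbitrarily close to $a$ (think of $a$ and $a+\eps\cdot 1_A$). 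Concretely, all your construction gives is $\tau_n(a_n)>0$ and $\tau_n(a)=0$, which only forces $\tau_n(a_n)\leq\norm{a_n-a}$, perfectly consistent. No amount of interleaving ``shrink the intersection'' with ``handle $a_n$'' fixes this, because a strictly decreasing chain of non-zero ideals can still have non-zero intersection, and the density argument does not close the gap.

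What is missing is a \emph{quantitative} estimate that survives passage from $a_n$ to nearby $a$. The paper supplies exactly this: using functional calculus (apply $g$ vanishing on $[0,\norm{a}/2]$ to $a$, then pick $\tau$ with $\tau(g(a))\neq 0$), one gets for every non-zero $a\in A_+$ a tracial state $\tau$ with $\norm{a+I_\tau}>\norm{a}/2$. This norm bound \emph{is} stable under approximation, so choosing $\tau_n$ this way for each $a_n$ and setting $\tau=\sum 2^{-n}\tau_n$ yields $\norm{a+I_\tau}\geq\norm{a}/2$ for all $a\in A_+$ by density, hence $I_\tau=\{0\}$. Note also that the paper separates the two steps cleanly: $(iii)\Rightarrow(ii)$ is a one-line contradiction (apply $(iii)$ to $I_0=\bigcap_{\tau\in T(A)}I_\tau$ and extend the resulting trace to $A$), and then $(ii)\Rightarrow(i)$ uses the functional-calculus trick just described. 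Your recursive construction from $(iii)$ alone conflates these and never produces the needed uniform control.
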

\begin{proof}
The implications (i)$\Rightarrow$(ii)$\Rightarrow$(iii) are trivial. In order to prove (ii)$\Rightarrow$(i), we first show that, for any non-zero $a\in A_+$, there exists a tracial state $\tau\in T(A)$ such that $\norm{a+I_{\tau}}>\norm{a}/2$. We may, without loss of generality, assume that $\norm{a}=1$. Let $g\colon [0,1]\to [0,1]$ be the piecewise linear continuous function which is zero on $[0,1/2]$ and such that $g(1)=1$. By the continuous functional calculus, $g(a)$ is a non-zero, positive element in $A$ and, by assumption, there thus exists a tracial state $\tau\in T(A)$ such that $\tau(g(a))\neq 0$. It then follows that $g(a+I_{\tau})=g(a)+I_{\tau}\neq 0$, and as $g$ is zero on $[0,1/2]$, we hence conclude that $\norm{a+I_{\tau}}>1/2$. We shall now use this fact in showing that $A$ admits a faithful tracial state. By separability, there exists a countable norm-dense sequence $(a_n)_{n\in \NN}$ of positive elements in $A$. By the above, we can, for each $n\in \NN$, find a tracial state $\tau_n$ for which $\norm{a_n+I_{\tau_n}}>\norm{a_n}/2$. Let $\tau=\sum_{n\in \NN}\tau_n / 2^n$. It is easily verified that $\tau$ is a tracial state, and that $I_{\tau} = \bigcap_{n\in \NN}I_{\tau_n}$. By this construction, it follows that, for any $n\geq 1$, 
\begin{align*}
\norm{a_n+I_{\tau}}\geq \norm{a_n+I_{\tau_n}}\geq \frac{\norm{a_n}}{2},
\end{align*}
and hence by continuity $\norm{a+I_{\tau}}\geq \norm{a}/2$ for all positive $a\in A$. This implies that $\tau(a)\neq 0$ for all positive non-zero $a\in A$, i.e., $\tau$ is faithful. \\ \\
We now prove the remaining implication (iii)$\Rightarrow$(ii). Consider the ideal $I_0 = \bigcap_{\tau \in T(A)}I_{\tau}$ and observe that this is non-zero if and only if (ii) is false. For the sake of reaching a contradiction, suppose that $I_0$ is non-zero. By assumption, there exists a non-zero bounded positive trace on $I_0$, which can be extended to all of $A$ by \citep[Theorem 3.3.9]{Murphy-1990}. Upon normalising to ensure unitality of this trace, we can assume that it is a tracial state; call it $\tau_0$. But since $\tau_0$ is non-zero on $I_0$, we reach the contradiction that $I_{0}$ is not contained in $I_{\tau_0}$.
\end{proof}
There is a purely algebraic reformulation of admitting a separating family of tracial states due to Cuntz-Pedersen \citep[Theorem 3.4]{Cuntz-Pedersen-1979}.
\begin{proposition}[Cuntz-Pedersen, 1979] \label{prop:cuntz-pedersen}
A $C^{\ast}$-algebra $A$ has a separating family of tracial states if and only if, for any $x_1,\ldots, x_n\in A$, if $\sum_{i} x_{i}x_{i}^{\ast}\leq \sum_{i} x_{i}^{\ast} x_{i}$, then $\sum_{i} x_i x_{i}^{\ast} = \sum_{i} x_{i}^{\ast}x_{i}$.
\end{proposition}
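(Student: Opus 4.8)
The plan is to treat the two implications separately: the forward one is immediate, and I would reduce the converse, via a Hahn--Banach argument, to a single approximation lemma of Cuntz--Pedersen.

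\emph{Forward direction.} If $A$ has a separating family of tracial states and $\sum_i x_i x_i^{\ast}\le\sum_i x_i^{\ast}x_i$, then $a:=\sum_i x_i^{\ast}x_i-\sum_i x_i x_i^{\ast}$ is positive and $\tau(a)=\sum_i\big(\tau(x_i^{\ast}x_i)-\tau(x_ix_i^{\ast})\big)=0$ for every $\tau\in T(A)$; since the family separates, $a=0$.

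\emph{Converse.} Set $F:=\mathrm{span}_{\RR}\{\,x^{\ast}x-xx^{\ast}:x\in A\,\}\subseteq A_{\mathrm{sa}}$. Absorbing positive scalars into the $x$'s and replacing $x$ by $x^{\ast}$ to reverse signs, one checks that $F$ equals the set of finite sums $\sum_{i=1}^{n}(x_i^{\ast}x_i-x_ix_i^{\ast})$, so the right-hand condition of the statement says exactly that $F\cap A^+=\{0\}$; I would prove the contrapositive. Assume $A$ has no separating family, and pick a nonzero $a\in A^+$ with $a\in\bigcap_{\tau\in T(A)}I_{\tau}$. Then $\tau(a^2)=0$, so $\tau(a)=0$ for all $\tau\in T(A)$ (by Cauchy--Schwarz, with an approximate unit if $A$ is non-unital), and hence $\psi(a)=0$ for every positive bounded trace $\psi$ (normalise). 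Now the real and imaginary parts of a bounded trace are self-adjoint traces, and the positive and negative parts of a bounded self-adjoint trace are again traces --- this follows from uniqueness of the Jordan decomposition together with the fact that a bounded functional is a trace precisely when it is invariant under conjugation by the unitary group of $\widetilde A$ --- so $\phi(a)=0$ for \emph{every} bounded trace $\phi$. Since the bounded self-adjoint traces are exactly the annihilator of $F$ in $A_{\mathrm{sa}}^{\ast}$, the bipolar theorem gives $a\in\overline F$, so $\overline F\cap A^+\ne\{0\}$.

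\emph{The crux.} It remains to pass from $\overline F\cap A^+\ne\{0\}$ to $F\cap A^+\ne\{0\}$; the resulting nonzero $p\in F\cap A^+$ satisfies $\sum_i x_ix_i^{\ast}\le\sum_i x_i^{\ast}x_i$ with strict inequality, completing the contrapositive. I would deduce this from the sharper statement $\overline F\cap A^+=\overline{F\cap A^+}$, so that a nonzero element of the left side produces one in $F\cap A^+$. This is the technical heart and I expect it to be the main obstacle: given $a\in A^+$ with $a=\lim_k h_k$, $h_k\in F$, the negative parts $h_k^{-}$ converge to $a^{-}=0$, so the $h_k$ are ``almost positive''; yet one cannot simply replace $h_k$ by $(h_k-\eps)_+$ because $F$ is not stable under continuous functional calculus, so one must perturb the defining families $(x_i^{(k)})_i$ directly, using $2\times 2$ corners and row/column manipulations to reabsorb the error into the commutator structure, which is the content of Cuntz--Pedersen's lemma \citep{Cuntz-Pedersen-1979}. (The mechanism is already visible in $\mathcal K(\ell^2)$: with $x_1,x_2$ suitably weighted operators carrying a fixed orthonormal basis into the odd- and even-indexed basis vectors, $\sum_i x_i^{\ast}x_i-\sum_i x_i x_i^{\ast}$ can be made a nonzero scalar multiple of a rank-one projection.) Granting this lemma, the converse, and with it the Proposition, follows.
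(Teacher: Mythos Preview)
The paper does not supply a proof of this proposition; it is stated as a result of Cuntz--Pedersen with a bare citation to \citep[Theorem 3.4]{Cuntz-Pedersen-1979}, and the text moves on immediately. There is therefore no proof in the paper to compare your sketch against.

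That said, your outline is sound and tracks the original Cuntz--Pedersen argument. The forward direction is correct as written. For the converse, your reduction via the Jordan decomposition of bounded traces and the bipolar theorem to the inclusion $a\in\overline F\cap A^+$ is standard and correct; the passage from $\overline F\cap A^+\ne\{0\}$ to $F\cap A^+\ne\{0\}$ is, as you rightly flag, the genuine content, and it is precisely this approximation step that requires the structural work carried out in \citep{Cuntz-Pedersen-1979}. You have not proved that step but only invoked it---which is exactly what the paper itself does---so your proposal matches the paper's level of detail. One minor remark: the identity you cite, $\overline F\cap A^+=\overline{F\cap A^+}$, is a convenient packaging but slightly stronger than what is strictly needed (namely that the left side is nonzero only if $F\cap A^+$ is); the Cuntz--Pedersen machinery in fact yields the stronger statement, but it is worth being aware of the distinction.
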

As mentioned in the introduction, it can be difficult to verify if a given $C^{\ast}$-algebra has a separating family of tracial states or not. However, there are certain classes where it is well-known and easy to check. 
\begin{example} \label{ex:separating-families-examples}
The following classes of $C^{\ast}$-algebras all admit separating families of tracial states.
\begin{enumerate}
\item Finite-dimensional $C^{\ast}$-algebras,
\item Abelian $C^{\ast}$-algebras,
\item Residually finite dimensional $C^{\ast}$-algebras,
\item $C^{\ast}_{r}(G)$ for any discrete group $G$,
\item Simple, unital, stably finite, exact $C^{\ast}$-algebras.
\end{enumerate}
\end{example}
\begin{proof}
The statements (i) and (ii) are trivial. \\ \\
For (iii), note that the separating finite-dimensional *-representation composed with any separating family of tracial states on the finite-dimensional quotients form a separating family of tracial states. \\ \\
(iv): The canonical tracial state on $C^{\ast}_{r}(G)$ is always faithful whenever $G$ is a discrete group. \\ \\
(v): Combine \citep{Haagerup-Quasitraces-2014} and \citep{Blackadar-Handelman-1982} to obtain a tracial state, which is necessarily faithful by simplicity.
\end{proof}
Note that while the reduced group $C^{\ast}$-algebras of discrete groups always admit faithful tracial states, the same does not hold for full group-$C^{\ast}$-algebras. Bekka showed \citep[Corollary 5]{Bekka-2007} that the $C^{\ast}$-algebras $C^{\ast}(\mathrm{SL}_{n}(\ZZ))$ do not admit faithful tracial states for any $n\geq 3$. \\ \\
We now review some permanence properties and lack thereof. For part (v) below, recall that if $I$ is an ideal in a $C^{\ast}$-algebra $A$ and if $I$ has a bounded tracial state $\tau$, then there exists a unique tracial state $\tau'$ on $A$ extending $\tau$; moreover, if $(e_{i})_{i\in I}$ is an approximate unit for $I$, then $\tau'$ is given by
\begin{align*}
\tau'(a) = \lim_{i}\tau(ae_{i}), \qquad a\in A.
\end{align*}
We shall call this tracial extension the \emph{canonical} extension of $\tau$ to $A$.
\begin{proposition} \label{prop:permanence-properties-sep-family}
Let $A$ and $B$ be unital $C^{\ast}$-algebras.
\begin{enumerate}
\item If $A$ has a separating family of tracial states, then so does any $C^{\ast}$-subalgebra of $A$.
\item The minimal tensor product $A\otimes_{\mathrm{min}}B$ has a separating family of tracial states state if and only if both $A$ and $B$ do.
\item $A$ has a separating family of tracial states if and only if $M_n(A)$ does for some (hence all) $n\in \NN$.
\item If $A=A_1\oplus \ldots \oplus A_n$ is a finite direct sum of $C^{\ast}$-algebras each with separating families of tracial states, then $A$ has a separating family of tracial states.
\item If $I$ is an ideal in $A$ and $I$ contains a separating family of tracial states, then their canonical tracial extensions to $A$ form a separating family of tracial states if and only if $I$ is an essential ideal in $A$. In particular, if any non-unital $C^{\ast}$-algebra $I$ has a separating family of tracial states, then so does the multiplier algebra $\mathcal{M}(I)$.
\item If $I$ is an ideal in $A$ with a separating family of tracial states, and if $A/I$ has a separating family of tracial states, then so does $A$. In other words, admitting separating families of tracial states is preserved by taking extensions.
\end{enumerate} 
\end{proposition}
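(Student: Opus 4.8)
My plan: dispatch (i), (iii), (iv), (vi) by soft arguments and concentrate on (ii) and (v), which are the only parts needing a real idea.

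For (i), I would invoke the Cuntz--Pedersen criterion, Proposition~\ref{prop:cuntz-pedersen}: having a separating family of tracial states is equivalent to the implication $\sum_i x_ix_i^{\ast}\le \sum_i x_i^{\ast}x_i \Rightarrow \sum_i x_ix_i^{\ast}=\sum_i x_i^{\ast}x_i$ for all finite tuples, and this descends verbatim to any $C^{\ast}$-subalgebra $B\subseteq A$, since the order on $B$ is inherited from $A$. Part (iii) is then just (ii) applied with ``$B$''$=M_n$: the latter always has a separating family (one faithful trace), and $M_n\otimes_{\mathrm{min}}A\cong M_n(A)$ as $M_n$ is finite-dimensional. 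Part (iv) is immediate: if $0\neq a=(a_1,\dots,a_n)\in A_+$ then some $a_k\neq 0$, and composing with the coordinate $\ast$-homomorphism $A\to A_k$ a tracial state of $A_k$ detecting $a_k$ detects $a$. For (vi), given $0\neq a\in A_+$, either $a\notin I$, in which case $a+I$ is a nonzero positive element of $A/I$ and the pullback along $A\to A/I$ of a tracial state detecting it works; or $a\in I$, in which case a tracial state of $I$ detecting $a$ has a canonical extension to $A$ (bounded traces on ideals extend, as recalled before the statement) that still detects $a$.

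For (ii), the forward implication is (i) applied to the unital subalgebras $A\otimes 1$ and $1\otimes B$ of $A\otimes_{\mathrm{min}}B$. For the converse, fix separating families $\{\tau_i\}\subseteq T(A)$ and $\{\sigma_j\}\subseteq T(B)$; each $\tau_i\otimes\sigma_j$ is tracial on the algebraic tensor product, hence extends to a tracial state on $A\otimes_{\mathrm{min}}B$, and I claim these separate. Let $0\neq x\in (A\otimes_{\mathrm{min}}B)_+$. The right slice maps $\mathrm{id}_A\otimes\omega$, with $\omega$ a state of $B$, are completely positive and jointly faithful on the minimal tensor product, so $(\mathrm{id}_A\otimes\omega)(x)\neq 0$ in $A_+$ for some $\omega$; choose $\tau_i$ with $\tau_i\bigl((\mathrm{id}_A\otimes\omega)(x)\bigr)\neq 0$. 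Then $(\tau_i\otimes\mathrm{id}_B)(x)\in B_+$ is not annihilated by $\omega$, hence is nonzero, so choosing $\sigma_j$ with $\sigma_j\bigl((\tau_i\otimes\mathrm{id}_B)(x)\bigr)\neq 0$ gives $(\tau_i\otimes\sigma_j)(x)\neq 0$.

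For (v), write $\mathcal{F}\subseteq T(I)$ for the given separating family and $\mathcal{F}'$ for the canonical extensions to $A$. If $I$ is not essential, its annihilator $I^{\perp}=\{a\in A: aI=0\}$ is a nonzero closed ideal, and for $0\neq a\in (I^{\perp})_+$ and an approximate unit $(e_k)$ of $I$ we get $\tau'(a)=\lim_k\tau(ae_k)=0$ for every $\tau\in\mathcal{F}$, so $\mathcal{F}'$ does not separate. Conversely, suppose $I$ is essential and $a\in A_+$ has $\tau'(a)=0$ for all $\tau\in\mathcal{F}$. For an approximate unit $(e_k)$ of $I$, the element $a^{1/2}e_ka^{1/2}$ lies in $I_+$ and satisfies $a^{1/2}e_ka^{1/2}\le a$, hence $\tau(a^{1/2}e_ka^{1/2})=\tau'(a^{1/2}e_ka^{1/2})\le\tau'(a)=0$; as $\mathcal{F}$ separates $I$ this forces $a^{1/2}e_ka^{1/2}=0$, whence $a^{1/2}e_k^{1/2}=0$ by the $C^{\ast}$-identity, hence $a^{1/2}e_k=0$, hence $a^{1/2}x=\lim_k a^{1/2}e_kx=0$ for all $x\in I$. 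Thus the closed ideal generated by $a^{1/2}$ has zero product with $I$, so trivial intersection with $I$, so is $0$ by essentiality; therefore $a=0$. The final ``in particular'' is this essential case applied to $I\subseteq\mathcal{M}(I)$.

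The main obstacle is part (v): the remaining parts merely shuffle traces between an algebra and a subalgebra or quotient, whereas here one must control the canonical extension $\tau'$, available only as the limit $\lim_k\tau(ae_k)$; the point is to feed an approximate unit of $I$ into $a$ and use traciality to produce positive elements of $I$ dominated by $a$, on which the separating hypothesis on $I$ bites. A minor loose end is the joint faithfulness of the right slice maps used in (ii): this is classical (it follows at once from the spatial description of $\|\cdot\|_{\mathrm{min}}$ via vector functionals), and I would cite it rather than reprove it.
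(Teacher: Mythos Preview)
Your argument is correct throughout. The two places where you diverge from the paper are worth noting.

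For (ii), the paper argues the nontrivial direction by contradiction via Kirchberg's slice lemma: assuming the common trace-kernel $I=\bigcap_{\tau}I_\tau$ is nonzero, one finds $z\in I$ with $zz^{\ast}=a\otimes b$ an elementary tensor, and then $\tau_A\otimes\tau_B$ detects $z^{\ast}z$ for suitable $\tau_A,\tau_B$. Your route through slice maps is more elementary---it avoids the slice lemma entirely and works directly with $(\tau_i\otimes\mathrm{id}_B)$ and $(\mathrm{id}_A\otimes\omega)$---at the cost of invoking the joint faithfulness of slice maps on the minimal tensor product, which, as you say, is classical and citable. Both approaches ultimately exploit that the min norm is spatial; yours is arguably the more direct unpacking of that fact.

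For (v), the paper's essential-ideal direction is organised slightly differently: starting from $\tau'(a^{\ast}a)=0$ for all $\tau$, it observes $\tau\bigl((ba)^{\ast}(ba)\bigr)\le\|b\|^2\,\tau'(a^{\ast}a)=0$ for every $b\in I$, so the separating hypothesis on $I$ kills $ba$ directly, giving $Ia=0$. Your version, sandwiching an approximate unit as $a^{1/2}e_ka^{1/2}\le a$, reaches the same conclusion $a^{1/2}I=0$ with one extra step through the $C^{\ast}$-identity. The paper's bound is marginally slicker because it needs no approximate unit, but the content is identical. For (i) the paper simply says ``immediate'' (restrict and renormalise the traces); your appeal to the Cuntz--Pedersen criterion is a clean alternative that sidesteps any fuss about whether the subalgebra shares the unit.
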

\begin{proof}
(i): This is immediate. \\ \\
(ii): Since $A$ and $B$ can be realised as $C^{\ast}$-subalgebras of $A\otimes_{\mathrm{min}} B$, one direction is immediate by the use of (i). So assume that both $A$ and $B$ have a separating family of tracial states. Suppose, for the sake of reaching a contradiction, that the ideal
\begin{align*}
I=\bigcap_{\tau \in T(A\otimes_{\mathrm{min}}B)}I_{\tau}
\end{align*}
is non-zero. By Kirchberg's slice lemma, see \citep[Lemma 4.1.9]{Rordam-2002}, we find a non-zero element $z\in A\otimes_{\mathrm{min}}B$ such that $zz^{\ast}=a\otimes b$ for some positive $a\in A$ and $b\in B$ and such that $z^{\ast}z\in I$. Observe that this implies $z\in I$. As $A$ and $B$ both have separating family of tracial states, we may find tracial states $\tau_A$ and $\tau_B$ on $A$ and $B$, respectively, such that $\tau_A(a)\neq 0$ and $\tau_B(b)\neq 0$. But then we clearly reach a contradiction, as
\begin{align*}
(\tau_A\otimes \tau_B)(z^{\ast}z) &= (\tau_A\otimes \tau_B)(zz^{\ast}) = (\tau_A\otimes \tau_B)(a\otimes b) \\
&= (\tau_A\otimes \tau_B)(a\otimes b) = \tau_A(a)\tau_B(b)\neq 0,
\end{align*}
which would imply that $z\not \in I_{\tau}$, contradicting the construction of $z$. \\ \\
(iii): This follows from (ii), since $M_n(A)\cong M_n(\CC)\otimes_{\mathrm{min}} A$ and $M_n(\CC)$ admits a faithful tracial state for every $n\in \NN$. \\ \\
(iv) is obvious. \\ \\
(v): Suppose that $I$ is an essential ideal in $A$. If $\tau\in T(I)$ is a tracial state, we denote by $\tau'$ the canonically extended tracial state on $A$. Assume that $a\in A$ satisfies that $\tau'(a^{\ast}a)=0$ for all $\tau'\in T(A)$. In particular, for any $b\in I$, we find that
\begin{align*}
\tau((ba)^{\ast}(ba)) \leq \norm{b^{\ast}b}\tau'(a^{\ast}a) = 0,
\end{align*}
for all $\tau\in T(I)$. Since $I$ has a separating family of tracial states, this means that $ba=0$ for all $b\in I$ or, equivalently, that $Ia=0$. $I$ being an essential ideal in $A$ then implies that $a=0$ as desired. Conversely, suppose that the canonical extensions of tracial states of $I$ on $A$ are separating. Let $a\in A^{+}\setminus \{0\}$ be arbitrary and suppose that $Ia=aI=0$. By assumption, there exists a tracial state $\tau$ on $I$ for which the canonical extension $\tau'$ on $A$ satisfies that $\tau'(a)\neq 0$. However, this contradicts the assumption that $a$ is orthogonal to $I$, since
\begin{align*}
\tau'(a) = \lim_{i}\tau(ae_{i}) = 0.
\end{align*}
(vi): Let $a\in A^{+}\setminus \{0\}$ be arbitrary and let $\pi\colon A\to A/I$ denote the quotient map. If $a\in I$, then there exists a tracial state $\tau$ on $I$ such that $\tau(a)>0$. On the other hand, if $a\not \in I$, then $\pi(a)\in (A/I)^{+}\setminus \{0\}$ and, by assumption, there exists a tracial state $\tau'$ on $A/I$ for which $\tau'(\pi(a))>0$. This proves that $A$ has a separating family of tracial states.
\end{proof}
The following proposition is an easy extension of the case with faithful tracial states, and the proofs are identical, so we skip the proof and refer to \citep{Murphy-1990}.
\begin{proposition} \label{prop:faithful-trace-implies-stably-finite-and-no-stable}
Let $A$ be a $C^{\ast}$-algebra with a separating family of tracial states. Then (the unitisation of) $M_n(A)$ contains no proper isometries for any $n\in \NN$, and $A$ has no stable $C^{\ast}$-subalgebra.
\end{proposition}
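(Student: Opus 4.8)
The plan is to deduce both assertions from one elementary principle — under a separating family of tracial states, a positive element on which every trace vanishes must itself be zero — together with one classical input: a nonzero stable $C^{\ast}$-algebra admits no nonzero bounded trace.

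For the isometry statement I would proceed as follows. Let $C$ denote the unital $C^{\ast}$-algebra $M_n(A)^{\sim}$ (or $M_n(A)$ itself, when that is already unital). By Proposition~\ref{prop:permanence-properties-sep-family}(iii), $M_n(A)$ has a separating family of tracial states, and this passes to $C$: when $M_n(A)$ is non-unital it is an essential ideal in $C$, so Proposition~\ref{prop:permanence-properties-sep-family}(v) applies, and when $M_n(A)$ is unital $C\cong M_n(A)\oplus\CC$, so Proposition~\ref{prop:permanence-properties-sep-family}(iv) applies. Now if $v\in C$ satisfies $v^{\ast}v=1$, then for every tracial state $\tau$ on $C$ we have $\tau(1-vv^{\ast})=\tau(1)-\tau(vv^{\ast})=\tau(1)-\tau(v^{\ast}v)=\tau(1)-\tau(1)=0$. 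Since $1-vv^{\ast}\geq 0$ and the family is separating, $1-vv^{\ast}=0$; thus $v$ is unitary, and $C$ contains no proper isometry.

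For the statement about stable subalgebras, suppose toward a contradiction that $A$ has a nonzero $C^{\ast}$-subalgebra $B$ with $B\cong B\otimes\mathcal{K}$, where $\mathcal{K}$ is the algebra of compact operators on a separable infinite-dimensional Hilbert space. By Proposition~\ref{prop:permanence-properties-sep-family}(i), $B$ has a separating family of tracial states, which is nonempty as $B\neq 0$; in particular $B$ carries a nonzero bounded positive trace. It thus remains to see that a nonzero stable $C^{\ast}$-algebra has no nonzero bounded trace. Transport the trace to a nonzero bounded positive trace $\rho$ on $B\otimes\mathcal{K}$ and fix the standard matrix units $(e_{ij})$ of $\mathcal{K}$. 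For $b\in B_{+}$ and any $i$, writing $b\otimes e_{ii}=(b^{1/2}\otimes e_{i1})(b^{1/2}\otimes e_{1i})$ and $b\otimes e_{11}=(b^{1/2}\otimes e_{1i})(b^{1/2}\otimes e_{i1})$, with both factors lying in $B\otimes\mathcal{K}$, traciality of $\rho$ gives $\rho(b\otimes e_{ii})=\rho(b\otimes e_{11})$. Hence $\rho\bigl(b\otimes(e_{11}+\cdots+e_{kk})\bigr)=k\,\rho(b\otimes e_{11})$ for all $k$, while $\bigl\Vert b\otimes(e_{11}+\cdots+e_{kk})\bigr\Vert=\Vert b\Vert$, so boundedness of $\rho$ forces $\rho(b\otimes e_{11})=0$ for every $b\in B_{+}$. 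The same manipulation gives $\rho(b\otimes e_{ij})=\rho\bigl((b^{1/2}\otimes e_{1j})(b^{1/2}\otimes e_{i1})\bigr)=\rho(b\otimes e_{1j}e_{i1})$, which vanishes since $e_{1j}e_{i1}$ is $0$ or $e_{11}$; by linearity and density $\rho=0$, a contradiction.

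I expect the one point genuinely requiring care to be the classical fact that a nonzero stable $C^{\ast}$-algebra carries no nonzero bounded trace; within the argument above, the subtle step is to realise the identifications $b\otimes e_{ii}\sim b\otimes e_{11}$ by elements of $B\otimes\mathcal{K}$ itself — writing them as products $(b^{1/2}\otimes e_{i1})(b^{1/2}\otimes e_{1i})$ rather than conjugating by multipliers of $B\otimes\mathcal{K}$, to which $\rho$ need not extend — so that traciality of $\rho$ applies legitimately. Everything else reduces to routine computations or direct appeals to Proposition~\ref{prop:permanence-properties-sep-family}.
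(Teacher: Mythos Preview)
Your argument is correct. The paper does not actually prove this proposition: it says only that ``the proofs are identical'' to the faithful-trace case and refers the reader to Murphy's textbook. What you have written is precisely the standard argument that this reference points to --- the trace computation $\tau(1-vv^{\ast})=0$ for the isometry part, and the matrix-unit computation showing that a bounded trace on $B\otimes\mathcal{K}$ must vanish --- spelled out carefully and with the permanence properties of Proposition~\ref{prop:permanence-properties-sep-family} invoked at the right places. Your explicit factorisation $b\otimes e_{ij}=(b^{1/2}\otimes e_{i1})(b^{1/2}\otimes e_{1j})$ inside $B\otimes\mathcal{K}$, rather than conjugation by multipliers, is indeed the clean way to make traciality apply, and the reduction to positive $b$ before passing to linear combinations and density is handled correctly.
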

We now give some counterexamples to other possible permanence properties.
\begin{example}
Having a separating family of tracial states does not pass to inductive limits.
\end{example}
\begin{proof}
Consider the unitization $\tilde{\mathbb{K}}$ of the compact operators. This is a unital AF-algebra, hence it is the inductive limit of finite-dimensional $C^{\ast}$-algebras admitting faithful tracial states, but it does not admit a faithful tracial state itself, since it contains $\mathbb{K}$ as an ideal. 
\end{proof}
\begin{example} \label{ex:faithful-tracial-not-passing-to-quotient}
Having a separating family of tracial state does not pass to quotients.
\end{example}
\begin{proof}
Any separable $C^{\ast}$-algebra can be realised as a quotient of the residually finite-dimensional $C^{\ast}$-algebra $C^{\ast}(\mathbb{F}_{\infty})$.
\end{proof}
It does not hold in general that admitting separating families of tracial states passes to \emph{maximal} tensor products. By following the proof of \citep[Proposition 3.13]{Kirchberg-Rordam-2015} in the case $D=A\otimes_{\mathrm{max}}B$, one obtains the next proposition.
\begin{proposition} \label{prop:max-min-faithful-tracial-state}
Let $A$ and $B$ be unital $C^{\ast}$-algebras. If $A\otimes_{\mathrm{max}}B$ has a separating family of tracial states, then $A\otimes_{\mathrm{max}} B = A\otimes_{\mathrm{min}} B$.
\end{proposition}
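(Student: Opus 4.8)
The plan is to prove the stronger statement that \emph{every} tracial state $\tau$ on $A\otimes_{\mathrm{max}}B$ factors through the canonical surjection $\pi\colon A\otimes_{\mathrm{max}}B\to A\otimes_{\mathrm{min}}B$. Granting this, if $A\otimes_{\mathrm{max}}B$ has a separating family of tracial states, then each such $\tau$ annihilates $\ker\pi$, i.e.\ $\ker\pi\subseteq I_\tau$, so $\ker\pi\subseteq\bigcap_{\tau}I_\tau=\{0\}$ and therefore $A\otimes_{\mathrm{max}}B=A\otimes_{\mathrm{min}}B$. This is the specialization to $D=A\otimes_{\mathrm{max}}B$ of the argument in \citep[Proposition 3.13]{Kirchberg-Rordam-2015}.

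To show a tracial state $\tau$ factors through $\pi$, I would pass to the GNS representation $(\pi_\tau,H_\tau,\xi_\tau)$. Since $\tau$ is tracial, $\xi_\tau$ is a cyclic trace vector and the vector state it induces restricts to a faithful normal tracial state on $M:=\pi_\tau(A\otimes_{\mathrm{max}}B)''$, so $M$ is a \emph{finite} von Neumann algebra. Put $M_A:=\pi_\tau(A\otimes 1)''$ and $M_B:=\pi_\tau(1\otimes B)''$; these are commuting von Neumann subalgebras with $M_A\vee M_B=M$, and on the algebraic tensor product $A\odot B$ one has $\pi_\tau=\mu\circ(\rho_A\odot\rho_B)$, where $\rho_A=\pi_\tau|_{A\otimes 1}$, $\rho_B=\pi_\tau|_{1\otimes B}$, and $\mu\colon M_A\odot M_B\to M$ is the multiplication map $a\otimes b\mapsto ab$. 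Hence, if $\mu$ is $\|\cdot\|_{\mathrm{min}}$-contractive, then for $x\in A\odot B$ we get $\|\pi_\tau(x)\|\le\|(\rho_A\odot\rho_B)(x)\|_{M_A\otimes_{\mathrm{min}}M_B}\le\|x\|_{A\otimes_{\mathrm{min}}B}$ (the second inequality since $\rho_A\otimes\rho_B$ is a $*$-homomorphism on the minimal tensor product). So $\pi_\tau$ extends to $A\otimes_{\mathrm{min}}B$, and therefore so does $\tau$, as required.

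The crux is thus the following lemma, which is where the finiteness of $M$ enters decisively: \emph{if $P$ and $Q$ are commuting von Neumann subalgebras of a finite von Neumann algebra $N$, then the multiplication map $P\odot Q\to N$ is $\|\cdot\|_{\mathrm{min}}$-contractive}. Here is the argument I would use. Replacing $N$ by the von Neumann algebra generated by the (finitely many) elements involved in a given $w\in P\odot Q$, we may assume $N$ has separable predual and a faithful normal tracial state, with trace-preserving conditional expectations $E_P,E_Q$. For $p\in P$ and $q\in Q$ we have $pE_P(q)=E_P(pq)=E_P(qp)=E_P(q)p$, so $E_P(Q)\subseteq Z(P)$; and $Z(P)$, being central in $P$ and contained in $P$, commutes with $P$ and with $Q$, hence with $N=P\vee Q$, so $Z(P)\subseteq Z(N)$, and likewise $Z(Q)\subseteq Z(N)$. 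Disintegrating $N$ over $Z(N)=L^\infty(Y,\nu)$ as $N=\int_Y^{\oplus}N_y\,d\nu(y)$ with $N_y$ finite factors, the subalgebras $P,Q$ disintegrate as $P=\int^{\oplus}P_y$, $Q=\int^{\oplus}Q_y$ with $P_y,Q_y$ again \emph{factors} (because $Z(P),Z(Q)$ lie in $L^\infty(Y)$ and so have scalar fibers) commuting inside $N_y$. By the first computation applied fiberwise, $E_{P_y}(Q_y)\subseteq Z(P_y)=\CC 1$, i.e.\ $P_y$ and $Q_y$ are trace-independent, so $\mu_y$ preserves the relevant $L^2$-inner products and extends to a trace-preserving normal isomorphism $P_y\,\bar{\otimes}\,Q_y\xrightarrow{\ \sim\ }P_y\vee Q_y$. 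Therefore $\|\mu(w)\|_N$ equals the essential supremum over $y$ of $\|w_y\|_{P_y\bar{\otimes}Q_y}=\|w_y\|_{P_y\otimes_{\mathrm{min}}Q_y}$, and since fibering is a $*$-homomorphism $P\otimes_{\mathrm{min}}Q\to P_y\otimes_{\mathrm{min}}Q_y$ for almost every $y$, this is $\le\|w\|_{P\otimes_{\mathrm{min}}Q}$.

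The main obstacle is this lemma, and inside it the two non-formal points: that commuting sub\emph{factors} of a finite factor are automatically trace-independent (so that their join is the von Neumann tensor product), and the direct-integral bookkeeping required to make ``fiberwise a $*$-homomorphism'' and ``norm $=$ essential supremum of fiber norms'' rigorous off a null set (this is where the reduction to separable predual is used). Everything else — the GNS reduction, the implication from $\mu$ being min-contractive to $\tau$ factoring through $\pi$, and the concluding separating-family argument — is routine.
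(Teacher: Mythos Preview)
Your approach is exactly the paper's: both reduce to \citep[Proposition~3.13]{Kirchberg-Rordam-2015} specialised to $D=A\otimes_{\mathrm{max}}B$, and your elaboration---GNS to a finite von Neumann algebra, then min-contractivity of the multiplication map for commuting subalgebras---is the standard way to unpack that reference.

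One technical point in your lemma deserves care. You disintegrate $P$ and $Q$ over $Z(N)=L^\infty(Y,\nu)$, but a von Neumann subalgebra only disintegrates in the usual fibrewise sense when it \emph{contains} the diagonal $L^\infty(Y)$. You proved $Z(P),Z(Q)\subseteq Z(N)$, not the reverse inclusion $Z(N)\subseteq P$, and indeed that inclusion can fail (take $P=\CC 1$, $Q=N=L^\infty[0,1]$). The fix is painless: replace $P,Q$ by $\hat P=P\vee Z(N)$ and $\hat Q=Q\vee Z(N)$. These still commute, now contain $Z(N)$, and one checks $Z(\hat P)=Z(\hat Q)=Z(N)$, so the fibres $\hat P_y,\hat Q_y$ are genuine factors; since the inclusion $P\otimes_{\mathrm{min}}Q\hookrightarrow\hat P\otimes_{\mathrm{min}}\hat Q$ is isometric, the desired bound for $w\in P\odot Q$ follows from the bound for $\hat P,\hat Q$. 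With this adjustment your argument goes through as written.
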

We can use this proposition to show that Proposition \ref{prop:permanence-properties-sep-family}(ii) fails for \emph{maximal} tensor products. It is well-known, see e.g. \citep{Takesaki-1964}, that $C^{\ast}_{r}(\mathbb{F}_2)\otimes_{\mathrm{max}}C^{\ast}_{r}(\mathbb{F}_2) \neq C^{\ast}_{r}(\mathbb{F}_2)\otimes_{\mathrm{min}}C^{\ast}_{r}(\mathbb{F}_2)$. By Proposition \ref{prop:max-min-faithful-tracial-state}, the maximal tensor product does not admit a separating family of tracial states. On the other hand, $C^{\ast}_{r}(\mathbb{F}_2)$ admits a faithful tracial state. Hence the maximal tensor product of two $C^{\ast}$-algebras admitting separating families of tracial states need not admit a separating family of tracial states. \\ \\
Another interesting usage of Proposition \ref{prop:max-min-faithful-tracial-state} is its relation to the Connes embedding problem, which is how it originally appears in \citep{Kirchberg-1993}. One equivalent formulation of the Connes embedding problem, see \citep[Theorem 13.3.1]{Brown-Ozawa-2008}, is that $C^{\ast}(\mathbb{F}_{\infty})\otimes_{\mathrm{max}} C^{\ast}(\mathbb{F}_{\infty}) = C^{\ast}(\mathbb{F}_{\infty})\otimes_{\mathrm{min}}C^{\ast}(\mathbb{F}_{\infty})$. If the maximal tensor product admitted a faithful tracial state, this equality would be true by Proposition \ref{prop:max-min-faithful-tracial-state}; see also \citep[Exercise 13.3.1-4]{Brown-Ozawa-2008}. By the announced negative answer to the Connes embedding problem \citep{MIP*=RE}, we would thus be able to conclude that $C^{\ast}(\mathbb{F}_{\infty}\times \mathbb{F}_{\infty})= C^{\ast}_{\mathbb{F}_{\infty}}\otimes_{\mathrm{max}}C^{\ast}_{\mathbb{F}_{\infty}}$ does not admit a faithful tracial state. \\ \\
Lastly, we examine some equivalent notions of having separating families of tracial states or admitting a faithful tracial state by using von Neumann terminology.
\begin{proposition}
Let $A$ be a unital $C^{\ast}$-algebra.
\begin{enumerate}
\item $A$ admits a separating family of tracial states if and only if $A$ unitally embeds into a finite von Neumann algebra.
\item $A$ has a faithful tracial state if and only if $A$ unitally embeds into a $\mathrm{II}_1$-factor.
\end{enumerate}
\end{proposition}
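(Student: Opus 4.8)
The plan is to deduce both equivalences from the GNS construction together with standard structure theory for von Neumann algebras, with the two directions that produce tracial states from an embedding being essentially immediate. In (i), if $A$ unitally embeds into a finite von Neumann algebra $M$, then $M$ admits a separating family of normal tracial states (equivalently to finiteness, via the centre-valued trace; see any standard reference), and the restriction of this family to $A$ is a separating family of tracial states on $A$. In (ii), a $\mathrm{II}_1$-factor $M$ has a unique tracial state, which is faithful and normal, and its restriction to a unital subalgebra $A$ remains faithful since $\tau_M(a^{\ast}a)=0$ for $a\in A\subseteq M$ forces $a^{\ast}a=0$ in $M$.

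For the converse in (i), let $(\tau_i)_{i\in I}$ be a separating family of tracial states and $(\pi_i,H_i,\xi_i)$ the associated GNS triples. As $\tau_i$ is tracial, the GNS vector state at $\xi_i$ defines a normal tracial state on $\pi_i(A)''$, faithful since a cyclic vector for a trace is separating; hence each $\pi_i(A)''$ is finite. Put $\pi=\bigoplus_i\pi_i$ acting on $\bigoplus_iH_i$. This is unital, and injective because $\pi(a)=0$ forces $\tau_i(a^{\ast}a)=\norm{\pi_i(a)\xi_i}^2=0$ for every $i$, so $a\in\bigcap_iI_{\tau_i}=\{0\}$. Since the coordinate projections $\prod_iB(H_i)\to B(H_j)$ are normal $\ast$-homomorphisms carrying $\pi(A)$ into $\pi_j(A)$, we obtain $\pi(A)''\subseteq\prod_i\pi_i(A)''$; the latter product is finite (its coordinatewise normal traces form a separating family) and finiteness passes to von Neumann subalgebras, so $\pi(A)''$ is a finite von Neumann algebra containing $A$ unitally.

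For the converse in (ii), let $\tau$ be a faithful tracial state on $A$. Its GNS representation $\pi_\tau$ is injective, as $\ker\pi_\tau\subseteq I_\tau=\{0\}$, so $A$ embeds unitally into the finite von Neumann algebra $M:=\pi_\tau(A)''$, equipped with the faithful normal tracial state $\overline{\tau}$ given by the GNS vector state. It remains to embed $(M,\overline{\tau})$ trace-preservingly into a $\mathrm{II}_1$-factor, and for this I would form the von Neumann algebra free product $\mathcal{M}:=(M,\overline{\tau})\ast(L(\mathbb{F}_2),\tau_0)$ relative to the two traces. Then $M$ sits trace-preservingly in $\mathcal{M}$, the free-product trace is faithful and normal so $\mathcal{M}$ is finite, $\mathcal{M}$ is infinite-dimensional since it contains $L(\mathbb{F}_2)$, and $\mathcal{M}$ is a factor: because $L(\mathbb{F}_2)$ is diffuse, $Z(\mathcal{M})\subseteq L(\mathbb{F}_2)'\cap\mathcal{M}=Z(L(\mathbb{F}_2))=\CC$. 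Thus $\mathcal{M}$ is a $\mathrm{II}_1$-factor into which $A$ embeds unitally.

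The main obstacle is the factoriality of $\mathcal{M}$ in the last step — equivalently, the fact that a diffuse von Neumann subalgebra of a reduced free product has trivial relative commutant in the ambient algebra — which I would quote from the literature on free products of von Neumann algebras (Voiculescu--Dykema--Nica, Dykema, Popa) rather than reprove. This route has the feature that it imposes no separability hypothesis on $A$, unlike arguments passing through known embeddings of separable finite von Neumann algebras into $\mathrm{II}_1$-factors. Everything else — the GNS computations, the normality of the coordinate maps, and the stability of finiteness under von Neumann subalgebras and $\ell^{\infty}$-products — is routine.
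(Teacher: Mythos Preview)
Your proof is correct and follows essentially the same route as the paper: direct sum of GNS representations for (i), and GNS followed by embedding the resulting finite von Neumann algebra with faithful normal trace into a $\mathrm{II}_1$-factor for (ii). The only difference is that where the paper simply cites a reference for this last embedding step, you sketch the free-product construction with $L(\mathbb{F}_2)$ explicitly --- which has the pleasant side effect of avoiding any separability hypothesis on $A$.
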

\begin{proof}
(i): Every finite von Neumann algebra has a separating family of tracial states by \citep[III.2.5.8]{Blackadar-Operator-Algebras-2006}, so we only need to prove the "only if" direction. For any tracial state $\tau\in T(A)$, we can consider the GNS-representation $\pi_{\tau}\colon A\to \mathbb{B}(H_{\tau})$. Since $\bigcap_{\tau \in T(A)}I_{\tau}=\{0\}$, it follows that the product $\pi\defeq \bigoplus_{\tau \in T(A)}\pi_{\tau}\colon A \to \mathbb{B}(H)$ is injective. Moreover,
\begin{align*}
A\cong \pi(A) \subseteq \bigoplus_{\tau\in T(A)}\pi_{\tau}(A)''
\end{align*}
and the right-hand side is a finite von Neumann algebra. \\ \\
(ii): A $\mathrm{II}_1$-factor immediately admits a faithful tracial state so to prove the other direction, let us assume that $A$ admits a faithful tracial state $\tau$. Then $A$ unitally embeds via the GNS representation of $\tau$ into a finite von Neumann algebra $M$ with a normal faithful tracial state. The claim now follows from the well-known fact that any von Neumann algebra with a normal faithful tracial state embeds into a $\mathrm{II}_1$-factor, see e.g.\ the proof of \citep[Theorem A.1]{Musat-Rordam-2020-non-closure-Ozawa-appendix}
\end{proof}
\section{Faithful tracial states on quotients}
In \citep{Murphy-2000}, Murphy initiated the study of $C^{\ast}$-algebras whose quotients all admit tracial states. He coined this notion QTS for \emph{quotient tracial states} and examples of $C^{\ast}$-algebras with the QTS property include unital strongly quasidiagonal $C^{\ast}$-algebras and group-$C^{\ast}$-algebras of amenable groups. In this section, we shall consider a stronger condition, namely that all quotients of the $C^{\ast}$-algebra admit \emph{faithful} tracial states. Continuing the terminology introduced by Murphy, we shall call this the \emph{QFTS property}. Let us look at a few examples of $C^{\ast}$-algebras with the QFTS property.
\begin{example}
The following $C^{\ast}$-algebras have the QFTS property:
\begin{enumerate}
\item Unital $C^{\ast}$-algebras with the QTS property and $T_1$ primitive ideal space,
\item $C^{\ast}(G)$ for virtually nilpotent groups $G$,
\item Subhomogeneous $C^{\ast}$-algebras.
\end{enumerate}
\end{example}
\begin{proof}
(i): Suppose $A$ is a unital $C^{\ast}$-algebra with the QTS property, and suppose that the primitive ideal space $\mathrm{Prim}(A)$ is $T_1$ or, equivalently, that all primitive quotients are simple. Since $A$ has the QTS property, all primitive quotients will therefore admit faithful tracial states. Now, if $I$ is any ideal in $A$, then $I$ is equal to the intersection of all primitive ideals containing it, and we get an embedding
\begin{align*}
A/I \hookrightarrow  \prod_{I\subseteq J \in \mathrm{Prim}(A)}A/J,
\end{align*}
and we have just proved that the right-hand side admits faithful tracial states. \\ \\
(ii): If $G$ is virtually nilpotent, then $C^{\ast}(G)$ has a $T_1$ primitive ideal space by \citep[Corollary 3.2]{Echterhoff-1990}, and moreover $C^{\ast}(G)$ has the QTS property. The result now follows from (i). \\ \\
(iii) Any quotient of a subhomogeneous $C^{\ast}$-algebra is again subhomogeneous, so it suffices to show that unital subhomogeneous $C^{\ast}$-algebras admit faithful tracial states, which follows from Example \ref{ex:separating-families-examples}.
\end{proof}
It is immediate that having the QFTS property implies admitting a faithful tracial state, and the converse fails in general as is shown in Example \ref{ex:faithful-tracial-not-passing-to-quotient}. One way of viewing the QFTS property is by the fact that the ideals are completely characterised by the tracial states. 
\begin{proposition}
Let $A$ be a separable, unital $C^{\ast}$-algebra. Then $A$ has the QFTS property if and only if all ideals of $A$ can be realised as the trace-kernel of a tracial state on $A$.
\end{proposition}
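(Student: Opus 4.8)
The plan is to prove both implications directly, since the statement is essentially a reformulation of how trace-kernel ideals interact with quotient maps; separability and unitality seem to play no essential role here beyond keeping the normalisations sensible.

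For the forward implication, I would assume $A$ has the QFTS property and fix a proper ideal $I\trianglelefteq A$ with quotient map $\pi\colon A\to A/I$. By hypothesis $A/I$ admits a faithful tracial state $\sigma$, and I would set $\tau\defeq\sigma\circ\pi$. This is a tracial state on $A$ (traciality passes through the $\ast$-homomorphism $\pi$, and $\tau(1)=\sigma(1)=1$). For $a\in A$ one has $\tau(a^{\ast}a)=\sigma(\pi(a)^{\ast}\pi(a))$, and since $\sigma$ is faithful this vanishes exactly when $\pi(a)=0$, i.e.\ when $a\in I$. Hence $I_{\tau}=I$, so $I$ is realised as a trace-kernel.

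For the converse, I would let $I$ be a proper ideal and choose $\tau\in T(A)$ with $I_{\tau}=I$. The Cauchy--Schwarz inequality $|\tau(a)|^{2}\leq\tau(1)\tau(a^{\ast}a)=\tau(a^{\ast}a)$ shows $I_{\tau}\subseteq\ker\tau$, so $\tau$ factors through $\pi$ as a functional $\bar\tau$ on $A/I$ with $\bar\tau(a+I)=\tau(a)$; positivity and traciality of $\bar\tau$ are inherited from $\tau$ (using that $\pi$ maps $A_{+}$ onto $(A/I)_{+}$), and $\bar\tau(1_{A/I})=1$, so $\bar\tau$ is a tracial state. Then $\bar\tau\big((a+I)^{\ast}(a+I)\big)=\tau(a^{\ast}a)$ is zero precisely when $a\in I_{\tau}=I$, i.e.\ when $a+I=0$, so $\bar\tau$ is faithful. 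As $I$ was an arbitrary proper ideal, every proper quotient of $A$ carries a faithful tracial state, which is the QFTS property. (The edge case $I=A$ is vacuous on both sides: the zero algebra admits no state, and $A$ is never a trace-kernel since $\tau(1)=1$.)

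I do not expect any real obstacle: the whole content is the pair of observations that pulling a faithful trace back along $A\to A/I$ produces a trace on $A$ with kernel exactly $I$, and that a trace with kernel $I$ descends to a faithful trace on $A/I$. The only point that deserves an explicit line is the well-definedness of the descended functional $\bar\tau$, which rests on the standard inclusion $I_{\tau}\subseteq\ker\tau$; everything else is bookkeeping, and if one prefers, the converse can instead be phrased through condition (iii) of Proposition \ref{prop:faithful-trace-iff-separating-family}.
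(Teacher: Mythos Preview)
Your proof is correct and follows essentially the same route as the paper: pull back a faithful trace along the quotient map to realise $I$ as a trace-kernel, and conversely descend a trace with $I_{\tau}=I$ to a faithful trace on $A/I$. Your version is slightly more careful in that you explicitly justify well-definedness of $\bar\tau$ via the Cauchy--Schwarz inclusion $I_{\tau}\subseteq\ker\tau$ and address the edge case $I=A$, both of which the paper leaves implicit.
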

\begin{proof}
Suppose that $A$ has the QFTS property and let $I$ be an ideal in $A$. Denote by $\pi\colon A\to A/I$ the quotient mapping. By the QFTS property, $A/I$ admits a faithful tracial state $\tilde{\tau}$, which, in turn, induces a tracial state $\tau=\tilde{\tau}\circ \pi$ on $A$. It is clear that $I\subseteq I_{\tau}$, so suppose that $a\in I_{\tau}$. Then
\begin{align*}
0 = \tau(a^{\ast}a) = \tilde{\tau}(\pi(a^{\ast}a)) = \tilde{\tau}(\pi(a)^{\ast}\pi(a)),
\end{align*}
and by faithfulness of $\tilde{\tau}$, we see that $\pi(a)=0$ and, hence, that $a\in I$. \\ \\
Now suppose that, for any ideal $I$ in $A$, there exists a tracial state $\tau\in T(A)$ such that $I_{\tau}=I$. We can hence induce a tracial state $\tilde{\tau}$ on $A/I$ via $\tilde{\tau}(\pi(a))=\tau(a)$, where $\pi\colon A \to A/I_{\tau}$ is the quotient map. It is easily verified that $\tilde{\tau}$ is a faithful tracial state, which completes the proof.
\end{proof}
Using the equivalences of Proposition \ref{prop:faithful-trace-iff-separating-family} and the fact that stable $C^{\ast}$-algebras cannot admit bounded traces, we obtain the following.
\begin{proposition} \label{prop:QFTS-iff-intermediate-quotients-bdd-trace}
A separable, unital $C^{\ast}$-algebra $A$ has the QFTS property if and only if every intermediate quotient\footnote{Recall that an \emph{intermediate quotient} of $A$ is a $C^{\ast}$-algebra of the form $I/J$, where $J\subseteq I \subseteq A$ are ideals in $A$. In other words, any intermediate quotient can be realised as the ideal of a quotient of $A$.} admits a bounded trace. In particular, if $A$ has the QFTS property, then $A$ has no stable intermediate quotient and no properly infinite quotients.
\end{proposition}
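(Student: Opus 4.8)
The plan is to combine the characterisation of the QFTS property via trace-kernels (or rather, via faithful tracial states on quotients) with Proposition \ref{prop:faithful-trace-iff-separating-family}(iii), which says that for a separable unital $C^{\ast}$-algebra, admitting a faithful tracial state is equivalent to every non-zero ideal carrying a non-zero bounded positive trace. First I would prove the forward direction. Suppose $A$ has the QFTS property and let $I/J$ be an intermediate quotient, with $J \subseteq I \subseteq A$ ideals of $A$. Then $A/J$ is a quotient of $A$, hence admits a faithful tracial state by the QFTS property; moreover $I/J$ is an ideal in $A/J$. Since $A$ is separable and unital, so is $A/J$, and by Proposition \ref{prop:faithful-trace-iff-separating-family} (specifically (i)$\Rightarrow$(iii) applied to $A/J$) every non-zero ideal of $A/J$ admits a non-zero bounded positive trace; in particular $I/J$ does. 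If $I/J = \{0\}$ the statement is trivial, so this handles every intermediate quotient.

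For the converse, suppose every intermediate quotient of $A$ admits a bounded trace. I must show every quotient $A/J$ admits a faithful tracial state. Fix an ideal $J$; then $A/J$ is separable and unital, so by Proposition \ref{prop:faithful-trace-iff-separating-family} it suffices to verify condition (iii) for $A/J$, i.e.\ that every non-zero ideal of $A/J$ admits a non-zero bounded positive trace. But a non-zero ideal of $A/J$ has the form $I/J$ for an ideal $I$ with $J \subsetneq I \subseteq A$, and this is precisely an intermediate quotient of $A$, which by hypothesis admits a bounded trace; such a trace may be taken non-zero and positive after the usual adjustments. Hence $A/J$ satisfies (iii), so it admits a faithful tracial state, and since $J$ was arbitrary, $A$ has the QFTS property.

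Finally, for the ``in particular'' clause: if $A$ has the QFTS property, then by the equivalence just proved every intermediate quotient admits a bounded trace. A stable $C^{\ast}$-algebra admits no non-zero bounded trace (this is the well-known obstruction invoked just before the statement, and also follows from Proposition \ref{prop:faithful-trace-implies-stably-finite-and-no-stable}), so no intermediate quotient — in particular no quotient — of $A$ can be stable. Likewise a properly infinite unital $C^{\ast}$-algebra admits no tracial state at all (it contains a unital copy of the Toeplitz algebra, or more directly has $1 \sim p$ for a proper projection $p$), so no quotient of $A$ can be properly infinite.

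I do not anticipate a serious obstacle here; the proposition is essentially a repackaging of Proposition \ref{prop:faithful-trace-iff-separating-family}. The one point requiring a little care is the bookkeeping identifying ``non-zero ideals of quotients of $A$'' with ``intermediate quotients of $A$'', together with the routine normalisation turning an arbitrary bounded trace into a non-zero bounded \emph{positive} trace; both are straightforward. It is also worth being careful that the separability and unitality hypotheses are genuinely used, since they are what make Proposition \ref{prop:faithful-trace-iff-separating-family} applicable to each quotient $A/J$.
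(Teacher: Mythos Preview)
Your proposal is correct and matches the paper's approach exactly: the paper presents this proposition as an immediate consequence of Proposition~\ref{prop:faithful-trace-iff-separating-family} together with the fact that stable $C^{\ast}$-algebras admit no bounded traces, and you have simply written out the details of that deduction. The only cosmetic point is that ``bounded trace'' in the statement is implicitly meant to be non-zero and positive (matching condition~(iii) of Proposition~\ref{prop:faithful-trace-iff-separating-family}), so your ``usual adjustments'' clause is not really needed.
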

One goal of this article is to provide a converse to the latter part of Proposition \ref{prop:QFTS-iff-intermediate-quotients-bdd-trace} and, consequence, give an equivalent reformulation of the QFTS property for some classes of $C^{\ast}$-algebras. We shall attack this problem in two different manners: One by using the connections between dimension functions on Cuntz semigroups and tracial states, and another by using a result on stability of hereditary $C^{\ast}$-subalgebras in \citep{Hirshberg-Rordam-Winter-2007}. \\ \\
Let $A$ be a separable, unital $C^{\ast}$-algebra. In the sequel, we will work with both the complete Cuntz semigroup $\mathrm{Cu}(A)$ and pre-complete Cuntz semigroup $\mathrm{W}(A)$. The reader may find information about these constructions in e.g.\ \citep{Antoine-Perera-Thiel-2019}. Our reason to be interested in these Cuntz semigroup is the intrinsic connection between dimension functions and (quasi)traces on $A$. On a general ordered Abelian semigroup $S$, a state $f$ is a linear functional preserving the order, and the collection of such is denoted by $\Sigma(S)$, and the collection of states normalised at some $t\in S$ is denoted by $\Sigma(S,t)$. By a \emph{dimension function} on $A$, we mean a state on $\mathrm{W}(A)$ normalized at $\braket{1_A}$, i.e., a state $d$ for which $d(\braket{1_A})=1$. For each $a\in M_{\infty}(A)_{+}$ and each $\varepsilon>0$, we define the $\varepsilon$-cutoff of $a$ by $(a-\varepsilon)_{+}=f_{\varepsilon}(a)$, where the right-hand side is the continuous functional calculus applied to the function $f_{\varepsilon}(t)=\max\{0,t-\varepsilon\}$. A dimension function $d$ is \emph{lower semi-continuous} if $d(\braket{(a-\varepsilon)_{+}})\to d(a)$ for all $a\in M_{\infty}(A)_{+}$ as $\varepsilon\to 0$. We denote by $\Sigma(\mathrm{W}(A),\braket{1_A})$ and $\Sigma_{\mathrm{lsc}}(\mathrm{W}(A),\braket{1_A})$ the collection of dimension functions and lower semi-continuous dimension functions on $A$, respectively. \\ \\
If $\tau$ is a tracial state (or, in general, a quasitrace) on $A$, one can associate to it a lower semi-continuous dimension function $d_{\tau}$ by $d_{\tau}(\braket{a}) = \lim_{n\to \infty}\tau(a^{1/n})$ for $a\in M_{\infty}(A)_{+}$ and, in fact, this association gives rise to an affine bijection between the collection of quasitraces $\mathrm{QT}(A)$ and lower semi-continuous dimension functions $\Sigma_{\mathrm{lsc}}(\mathrm{W}(A),\braket{1_A})$ \citep{Blackadar-Handelman-1982}. We can use this identification to give another characterisation when $C^{\ast}$-algebras have separating families of tracial states in terms of dimension functions.
\begin{proposition} \label{prop:sep-family-traces-iff-sep-family-dim-functions}
Let $A$ be a unital, exact $C^{\ast}$-algebra. Then $A$ has a separating family of tracial states if and only if $\mathrm{W}(A)$ has a separating family of lower semi-continuous dimension functions.
\end{proposition}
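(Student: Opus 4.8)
The plan is to run both implications through the affine bijection $\tau \mapsto d_\tau$ between normalized quasitraces and lower semi-continuous dimension functions, invoking exactness only at the one place where it is truly needed. The bridge is the following elementary observation, which I would isolate as a lemma: for a bounded trace (indeed, quasitrace) $\tau$ on $A$ and a positive $a \in M_{\infty}(A)_{+}$, one has $d_\tau(\braket a) = 0$ if and only if $\tau(a) = 0$; equivalently, the zero set of $d_\tau$ on $\mathrm{W}(A)$ records exactly the positive part of the trace-kernel ideal $I_\tau$ (and of its amplifications). To prove this, rescale so that $\norm{a} \le 1$; then $n \mapsto a^{1/n}$ is increasing, so $d_\tau(\braket a) = \sup_n \tau(a^{1/n}) = \bar\tau(s(a))$, where $\bar\tau$ is the normal extension of (the amplification of) $\tau$ to the bidual and $s(a)$ is the support projection of $a$. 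On the other hand, writing $a = \int_{(0,1]} \lambda \, de_\lambda$, we get $\bar\tau(a) = \int_{(0,1]}\lambda \, d\mu(\lambda)$, where $\mu$ is the spectral distribution of $a$ relative to $\bar\tau$, a finite measure on $(0,1]$ of total mass $\bar\tau(s(a))$; since the integrand is strictly positive there, this integral vanishes precisely when $\mu = 0$, i.e. when $\bar\tau(s(a)) = 0$. Combining, $\tau(a) = 0 \iff \bar\tau(s(a)) = 0 \iff d_\tau(\braket a) = 0$. Recall also that, by Proposition \ref{prop:permanence-properties-sep-family}(iii), $A$ has a separating family of tracial states if and only if for every nonzero $a \in M_{\infty}(A)_{+}$ some tracial state (on the appropriate matrix amplification of $A$) does not vanish on $a$; so both sides of the proposition become assertions about one and the same family of functionals on $M_{\infty}(A)_{+}$.

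For the forward implication, let $\{\tau_i\}_i$ be a separating family of tracial states on $A$. Each $d_{\tau_i}$ is a lower semi-continuous dimension function, normalized at $\braket{1_A}$ since $\tau_i(1) = 1$. Given a nonzero element of $\mathrm{W}(A)$, represent it as $\braket a$ with $a \in M_{\infty}(A)_{+} \setminus \{0\}$. If every $\tau_i$ vanished on $a$ then, by the lemma, $a$ would lie in $\bigcap_i I_{\tau_i}$ (after amplification), forcing $a = 0$; so some $\tau_i$ has $\tau_i(a) \neq 0$, hence $d_{\tau_i}(\braket a) > 0$. Thus $\{d_{\tau_i}\}_i$ is a separating family of lower semi-continuous dimension functions on $\mathrm{W}(A)$. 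This direction uses neither exactness nor (in an essential way) unitality.

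For the converse, suppose $\mathrm{W}(A)$ has a separating family $\{d_j\}_j$ of lower semi-continuous dimension functions. By the Blackadar--Handelman bijection recalled above, each $d_j$ is of the form $d_{\sigma_j}$ for a unique normalized quasitrace $\sigma_j$ on $A$. Here is the only point at which exactness is used: by Haagerup's theorem \citep{Haagerup-Quasitraces-2014}, every quasitrace on a unital exact $C^{\ast}$-algebra is a trace, so each $\sigma_j$ is a tracial state, which I rename $\tau_j$. Now, for any nonzero $a \in M_{\infty}(A)_{+}$, pick $j$ with $d_j(\braket a) > 0$; by the lemma $\tau_j(a) \neq 0$. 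Hence no nonzero positive element of any amplification lies in $\bigcap_j I_{\tau_j}$, and Proposition \ref{prop:permanence-properties-sep-family}(iii) yields that $\{\tau_j\}_j$ is a separating family of tracial states on $A$. The main — indeed essentially the only — obstacle is a matter of setting things up correctly: proving the lemma cleanly, keeping the matrix-amplification bookkeeping straight, and pinning down the precise role of exactness. Without it, $\mathrm{W}(A)$ could perfectly well be separated by dimension functions arising from quasitraces that are not traces, so the implication would fail; Haagerup's theorem is exactly what rules this out.
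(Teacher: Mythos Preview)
Your proof is correct and follows essentially the same route as the paper: both pass through the Blackadar--Handelman/Haagerup bijection $T(A)\cong \Sigma_{\mathrm{lsc}}(\mathrm{W}(A),\braket{1_A})$ and read off the equivalence. The paper's proof is a one-liner that calls this ``merely rewriting'' and leaves the verification implicit, whereas you actually supply the missing lemma $d_\tau(\braket{a})=0 \iff \tau(a)=0$ and track the matrix amplifications via Proposition~\ref{prop:permanence-properties-sep-family}(iii); your version is more detailed but not a different argument.
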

\begin{proof}
By a separating family of lower semi-continuous dimension functions we mean that, for any $a\in M_{\infty}(A)_{+}$, there exists $d\in \Sigma_{\mathrm{lsc}}(\mathrm{W}(A),\braket{1_A})$ for which $d(\braket{a})\neq 0$. It follows from \citep{Haagerup-Quasitraces-2014} and the aforementioned identification of quasitraces and dimension functions that we have an affine bijection between $T(A)$ and $\Sigma_{\mathrm{lsc}}(\mathrm{W}(A),\braket{1_A})$ via $\tau\mapsto d_{\tau}$. The statement of the proposition is merely rewriting the notion of separating family of tracial states via this association.
\end{proof}
Proposition \ref{prop:sep-family-traces-iff-sep-family-dim-functions} entails that understanding the structure of $\Sigma_{\mathrm{lsc}}(\mathrm{W}(A),\braket{1_A})$ will help in understanding when $C^{\ast}$-algebras admit faithful tracial states. The following proposition due to Goodearl and Handelman \citep[Proposition 4.2]{Goodearl-Handelman-1976} gives a characterisation of when certain states exists on ordered Abelian semigroups. For the statement, recall that an element $t$ on an ordered Abelian semigroup is \emph{properly infinite} if $2t\leq t$.
\begin{proposition}[Goodearl-Handelman, 1972]
Let $S$ be an ordered Abelian semigroup with a distinguished order unit $u$ and assume that no multiple of $u$ is properly infinite. Let $t\in S$ be arbitrary and set
\begin{align*}
\alpha_{\ast} &= \sup\{k/\ell \mid  k,\ell \in \NN \mbox{ and } ku\leq \ell t\}, \\
\alpha^{\ast} &= \inf\{k/\ell \mid  k,\ell \in \NN \mbox{ and } \ell t\leq ku\}.
\end{align*}
Then $0\leq \alpha_{\ast}\leq \alpha^{\ast}$ and there exists $d\in \Sigma(S,u)$ with $d(t)=\alpha$ if and only if $\alpha_{\ast}\leq \alpha \leq \alpha^{\ast}$.
\end{proposition}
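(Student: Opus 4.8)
The plan is to keep the ``only if'' direction routine and to spend the effort on the construction in the ``if'' direction. For the preliminaries and necessity: the assumption that no multiple of $u$ is properly infinite yields the cancellation rule $mu\le nu\Rightarrow m\le n$ for $m,n\in\NN$, since if $m>n$ then $(n{+}1)u\le mu\le nu$, and adding $u$ to both sides repeatedly gives $2nu\le nu$, i.e.\ $nu$ properly infinite, a contradiction. From this, if $ku\le\ell t$ and $\ell' t\le k'u$, then $\ell'ku\le\ell\ell' t\le\ell k'u$, whence $\ell'k\le\ell k'$, i.e.\ $k/\ell\le k'/\ell'$; taking the supremum over the first family and the infimum over the second gives $\alpha_*\le\alpha^*$ ($\alpha_*\ge0$ is immediate, and $\alpha^*<\infty$ since $u$ is an order unit). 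For necessity, any $d\in\Sigma(S,u)$ applied to $ku\le\ell t$ gives $k\le\ell\,d(t)$ and to $\ell t\le ku$ gives $\ell\,d(t)\le k$, so $d(t)\in[\alpha_*,\alpha^*]$.

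For sufficiency I would first reduce to the endpoints: $\{d(t):d\in\Sigma(S,u)\}$ is the image of the convex set $\Sigma(S,u)$ under the affine map $d\mapsto d(t)$, hence an interval, and it lies in $[\alpha_*,\alpha^*]$; so it is enough to produce $d_*,d^{*}\in\Sigma(S,u)$ with $d_*(t)=\alpha_*$ and $d^{*}(t)=\alpha^*$, as convex combinations then realise every intermediate value (and in particular $\Sigma(S,u)\neq\emptyset$). Fixing $\beta\in\{\alpha_*,\alpha^*\}$, I would set
\[
p_\beta(s)=\inf\Big\{\tfrac{a+\beta b}{n}\ :\ n\in\NN,\ a,b\in\NN_0,\ ns\le au+bt\Big\},\qquad s\in S,
\]
which is finite and nonnegative because $u$ is an order unit, together with the companion $q_\beta(s)=\sup\{\tfrac{a+\beta b}{n}:au+bt\le ns\}$. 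One checks routinely that $p_\beta$ is subadditive, order-preserving and $\NN$-homogeneous ($p_\beta(ns)=n\,p_\beta(s)$), and dually for $q_\beta$. Granting the two key facts $q_\beta\le p_\beta$ pointwise and $p_\beta(u)=q_\beta(u)=1$, $p_\beta(t)=q_\beta(t)=\beta$, a transfinite sandwich (Hahn--Banach type) argument produces an additive, order-preserving $d_\beta\colon S\to\RR$ with $q_\beta\le d_\beta\le p_\beta$; then $d_\beta(u)=1$, $d_\beta(t)=\beta$, and $d_\beta\ge 0$, so $d_\beta\in\Sigma(S,u)$.

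The step I expect to be the real obstacle is the inequality $q_\beta\le p_\beta$, which unwinds to the claim that
\[
au+bt\le cu+dt \ \Longrightarrow\ a+\beta b\le c+\beta d\qquad(a,b,c,d\in\NN_0),
\]
equivalently that the assignment $ku+\ell t\mapsto k+\beta\ell$ on the subsemigroup generated by $u,t$ is order-preserving; this, together with the values at $u$ and $t$, is exactly where the order of $S$, the order-unit hypothesis and the defining families for $\alpha_*,\alpha^*$ have to be used together. Since $a+\alpha b\le c+\alpha d$ is affine in $\alpha$, one only needs it at $\alpha=\alpha_*$ and $\alpha=\alpha^*$, and there one multiplies $au+bt\le cu+dt$ through by denominators coming from witnesses $ku\le\ell t$ (for the $\alpha_*$ estimate) and $\ell t\le ku$ (for the $\alpha^*$ estimate) and invokes the cancellation rule; the delicate point is to squeeze out the \emph{sharp} bound at each endpoint rather than a weaker mixed inequality. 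Once this and the values at $u,t$ are in hand, the remaining items---subadditivity, $\NN$-homogeneity, and the sandwich extension with its monotonicity---are routine; alternatively the whole extension step can be run in the group completion of $S$ with its induced order, where positivity, hence monotonicity, of the extended functional is built into the Hahn--Banach argument.
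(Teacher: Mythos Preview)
The paper does not prove this proposition; it is quoted as a known result and attributed to Goodearl--Handelman \citep[Proposition 4.2]{Goodearl-Handelman-1976}, with no argument supplied. So there is no ``paper's own proof'' to compare against, and your proposal is effectively an attempt to reconstruct the original Goodearl--Handelman argument.

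On its own merits: your preliminaries are correct (the cancellation rule $mu\le nu\Rightarrow m\le n$, the inequality $\alpha_*\le\alpha^*$, and the necessity direction are all fine), and the overall architecture --- reduce to the endpoints by convexity, build sub- and super-additive gauges $p_\beta$, $q_\beta$, and sandwich a state in between --- is the standard and correct strategy. However, you explicitly flag the crux, the implication
\[
au+bt\le cu+dt \ \Longrightarrow\ a+\beta b\le c+\beta d \qquad (\beta\in\{\alpha_*,\alpha^*\}),
\]
as ``the real obstacle'' and do not actually carry it out. This is a genuine gap, not a routine step: in a semigroup without cancellation one cannot pass from $au+bt\le cu+dt$ to something like $(a-c)u\le(d-b)t$, so the sketch ``multiply through by denominators of witnesses and invoke cancellation'' does not obviously close. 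The standard route (as in Goodearl's treatment) avoids this direct monotonicity check by first establishing that $\Sigma(S,u)$ is nonempty and weak-$*$ compact, so that $\{d(t):d\in\Sigma(S,u)\}$ is a closed interval $[m,M]\subseteq[\alpha_*,\alpha^*]$, and then proving $M=\alpha^*$ (resp.\ $m=\alpha_*$) by contradiction using the separation lemma that $d(x)<d(y)$ for all $d\in\Sigma(S,u)$ forces $(n{+}1)x\le ny$ for some $n$. Your alternative suggestion of passing to the group completion with its induced preorder is viable and is in fact how Goodearl--Handelman operate, but then the work shifts to checking that states on the completion restrict to states on $S$ and that the extremal values survive; as written, the proposal is a correct outline with the decisive step still to be filled in.
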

From this we immediately get the following corollary.
\begin{corollary} \label{cor:not-separating-iff-inf-property}
Let $S$ is an ordered Abelian semigroup with a distinguished order unit, and let $t\in S$. Then $d(t)=0$ for all $d\in \Sigma(S,u)$  if and only if
\begin{align*}
\alpha^{\ast} = \inf\{k/\ell \mid k,\ell\in \NN \mbox{ and } \ell t \leq ku\} = 0.
\end{align*}
\end{corollary}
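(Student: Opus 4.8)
The plan is to read the corollary off the Goodearl--Handelman proposition by specialising the free parameter $\alpha$, first to $\alpha=0$ and then to $\alpha=\alpha^{\ast}$. I retain the standing hypothesis of that proposition, namely that no multiple of $u$ is properly infinite; note that this already forces $\Sigma(S,u)\neq\emptyset$ (apply the proposition with $t=u$, where $\alpha_{\ast}=\alpha^{\ast}=1$) and $0\leq\alpha_{\ast}\leq\alpha^{\ast}$, and recall that $\Sigma(S,u)$ consists of additive, order-preserving functionals $d\colon S\to[0,\infty]$ with $d(u)=1$.

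For the ``if'' direction, suppose $\alpha^{\ast}=0$; then $\alpha_{\ast}=0$ as well since $0\leq\alpha_{\ast}\leq\alpha^{\ast}$. Fix $d\in\Sigma(S,u)$. For any $k,\ell\in\NN$ with $\ell t\leq ku$, additivity and order-preservation give $\ell\,d(t)\leq k\,d(u)=k$, hence $d(t)\leq k/\ell$; taking the infimum over all such pairs yields $d(t)\leq\alpha^{\ast}=0$, and since $d$ is $[0,\infty]$-valued this forces $d(t)=0$. As $d$ was arbitrary, $d(t)=0$ for all $d\in\Sigma(S,u)$. (This direction uses only the definition of a state, not the proposition.) For the ``only if'' direction, suppose $d(t)=0$ for every $d\in\Sigma(S,u)$, and suppose toward a contradiction that $\alpha^{\ast}>0$. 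Then $\alpha\defeq\alpha^{\ast}$ satisfies $\alpha_{\ast}\leq\alpha\leq\alpha^{\ast}$, so the Goodearl--Handelman proposition supplies a state $d\in\Sigma(S,u)$ with $d(t)=\alpha^{\ast}>0$, contradicting the hypothesis. Hence $\alpha^{\ast}=0$.

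There is essentially no obstacle here: the entire content is carried by the (hard, existence) direction of the Goodearl--Handelman proposition, which guarantees that the image of the evaluation map $d\mapsto d(t)$ on $\Sigma(S,u)$ is exactly the interval $[\alpha_{\ast},\alpha^{\ast}]$, so that ``$d(t)=0$ for all $d$'' is equivalent to ``$[\alpha_{\ast},\alpha^{\ast}]=\{0\}$'', which in turn is equivalent to $\alpha^{\ast}=0$ because $\alpha_{\ast}\geq 0$. The only point requiring a little care is the bookkeeping around the empty-set conventions for $\alpha_{\ast}$ and $\alpha^{\ast}$ and the fact that $\Sigma(S,u)$ is nonempty, both of which are handled by the standing no-proper-infiniteness hypothesis as noted above.
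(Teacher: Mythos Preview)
Your proof is correct and matches the paper's approach: the paper simply declares the corollary immediate from the Goodearl--Handelman proposition, and your argument is precisely the natural unpacking of that implication. Your explicit retention of the standing hypothesis that no multiple of $u$ is properly infinite is a worthwhile clarification, since the corollary's statement in the paper omits it but the ``only if'' direction genuinely requires it.
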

Observe that while this corollary seems almost directly applicable to Proposition \ref{prop:sep-family-traces-iff-sep-family-dim-functions}, there is an important subtlety, namely that in that proposition we are interested in separating families of \emph{lower semi-continuous} dimension functions. To remedy this, we need to examine more properties that Cuntz semigroups may have. If $S$ is an ordered Abelian semigroup and $x,y\in S$, we write that $x<_{s} y$ if there exists $n\in \NN$ such that $(n+1)x\leq ny$ or, equivalently, if there exists $m\in \NN$ such that $x\leq my$ and $d(x)<d(y)$ for all $d\in \Sigma(S,y)$ \citep{Ortega-Perera-Rordam-2012}.
\begin{definition}
Let $S$ be an ordered Abelian semigroup, and let $n\in \NN_0$. We say that $S$ has \emph{n-comparison} if, for any $x,y_0,\ldots, y_n\in S$ satisfying $x<_{s}y_j$ for all $j=0,\ldots,n$, we have $x\leq y_0+\ldots+y_n$.
\end{definition}
Clearly $n$-comparison implies $m$-comparison for $n\leq m$. Observe that $0$-comparison is equivalent to the property that $(n+1)x\leq ny$ for some $n \in \NN$ implies $x\leq y$, which is also known as \emph{almost unperforation}. If $A$ is a $C^{\ast}$-algebra, then $\mathrm{W}(A)$ is almost unperforated if and only if $\mathrm{Cu}(A)$ is almost unperforated \citep{Antoine-Perera-Thiel-2019}. Examples of $C^{\ast}$-algebras with almost unperforated Cuntz semigroups include $A\otimes B$, where $A$ is any unital $C^{\ast}$-algebra and $B$ is either an infinite-dimensional UHF-algebra \citep[Lemma 5.1]{Rordam-1992-II}, or $B=\mathcal{Z}$ is the Jiang-Su algebra \citep[Theorem 4.5]{Rordam-Stable-2004}. Another related, but weaker, notion is that of $\omega$-comparison; here we use the relation $\ll$ defined as follows: We say $s\ll t$ if, whenever $t\leq \sup_{n} t_n$ for an increasing sequence $(t_n)_{n\in \NN}$, there exists $n_0\in \NN$ such that $s\leq t_{n_0}$. 
\begin{definition}
A complete ordered Abelian semigroup $S$ has $\omega$-comparison if, for any $x,x',y_0,y_1,\ldots \in S$ with $x<_{s}y_j$ and $x'\ll x$, there exists $n\in \NN$ such that $x'\leq y_0+\ldots+y_n$.
\end{definition}
It is clear that $n$-comparison for any $n\in \NN$ implies $\omega$-comparison. \\ \\
We rewrite Corollary \ref{cor:not-separating-iff-inf-property} assuming almost unperforation.
\begin{lemma} \label{lemma:inf=0-iff-lt-leq-u}
Let $S$ be an ordered Abelian semigroup with a distinguished order unit $u$, and suppose that $S$ is almost unperforated. Let $t\in S$. The following are equivalent:
\begin{enumerate}
\item $d(t)=0$ for all $d\in \Sigma(S,u)$,
\item $\inf\{k/\ell \mid k,\ell \in \NN \mbox{ and } \ell t \leq ku\} = 0$,
\item There exists $k\in \NN$ such that $\ell t \leq ku$ for all $\ell \in \NN$,
\item $\ell t \leq u$ for all $\ell \in \NN$.
\end{enumerate}
\end{lemma}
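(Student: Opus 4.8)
The plan is to run the cycle of implications (i)$\Leftrightarrow$(ii)$\Rightarrow$(iv)$\Rightarrow$(iii)$\Rightarrow$(ii), with almost unperforation entering only in the step (ii)$\Rightarrow$(iv); the remaining steps are essentially formal.

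First I would note that (i)$\Leftrightarrow$(ii) is exactly Corollary~\ref{cor:not-separating-iff-inf-property} applied to the order unit $u$ — here one uses that $u$ is an order unit to see that the set $\{k/\ell \mid \ell t \le ku\}$ is nonempty (take $\ell=1$), so that its infimum is a genuine element of $[0,\infty)$. Next, (iv)$\Rightarrow$(iii) is immediate with $k=1$, and (iii)$\Rightarrow$(ii) is immediate since if $\ell t\le ku$ holds for a fixed $k$ and all $\ell$, then $k/\ell\to 0$.

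The substantive step is (ii)$\Rightarrow$(iv). Fix $\ell\in\NN$; the goal is to show $\ell t\le u$. Since the infimum in (ii) equals $0$, I can choose $k,p\in\NN$ with $pt\le ku$ and $k/p<\tfrac{1}{2\ell}$, i.e.\ $p>2k\ell$. Because $k\ge1$ we have $2k\ell\ge(k+1)\ell$, hence $(k+1)\ell\le p$ and therefore
\[
(k+1)(\ell t) \;=\; (k+1)\ell\cdot t \;\le\; pt \;\le\; ku .
\]
Applying almost unperforation with $n=k$, $x=\ell t$ and $y=u$ — i.e.\ $(n+1)x\le ny$ implies $x\le y$ — then yields $\ell t\le u$, as desired.

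I expect the only delicate point to be the bookkeeping in this last step: almost unperforation needs an inequality of the shape $(k+1)x\le ku$, so one must secure $p\ge(k+1)\ell$; but $k$ is only revealed after the threshold on $k/p$ has been chosen, and the way around this is to exploit the uniform bound $(k+1)/k\le 2$ and pick the threshold $\tfrac1{2\ell}$ independently of $k$. It is also worth recording that if some multiple of $u$ happens to be properly infinite, then $\Sigma(S,u)=\emptyset$ and all four conditions hold vacuously or trivially, so there is no tension with the hypotheses underlying Corollary~\ref{cor:not-separating-iff-inf-property}.
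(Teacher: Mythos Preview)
Your proof is correct and slightly different from the paper's. Both arguments agree on the easy implications (iv)$\Rightarrow$(iii)$\Rightarrow$(ii) and invoke Corollary~\ref{cor:not-separating-iff-inf-property} for (i)$\Leftrightarrow$(ii). For the substantive step, the paper proves (i)$\Rightarrow$(iv) by passing through the relation $<_s$: from $d(\ell t)=0<1=d(u)$ for all $d\in\Sigma(S,u)$ together with $\ell t\le mu$ (order unit) one gets $\ell t<_s u$ via the state characterisation of $<_s$ cited from \citep{Ortega-Perera-Rordam-2012}, and then almost unperforation ($0$-comparison) gives $\ell t\le u$. You instead prove (ii)$\Rightarrow$(iv) by a direct arithmetic argument, manufacturing an inequality of the shape $(k+1)(\ell t)\le ku$ from a single witness $pt\le ku$ with $k/p$ small enough, and then applying almost unperforation in its raw form. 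Your route is a touch more elementary in that it avoids the nontrivial equivalence between the two descriptions of $<_s$; the paper's route is shorter once that equivalence is taken as given. Note that your step $(k+1)\ell\cdot t\le pt$ tacitly uses that $mt\le nt$ whenever $m\le n$, i.e.\ positivity of the ordering; this is harmless here (and the paper's use of the $<_s$ equivalence relies on the same), but it is worth saying explicitly.
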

\begin{proof}
It is clear that (iv)$\Rightarrow$(iii)$\Rightarrow$(ii) and (ii)$\Leftrightarrow$(i) is the content of Corollary \ref{cor:not-separating-iff-inf-property}, so let us prove (i)$\Rightarrow$(iv). Suppose $d(t)=0$ for all $d\in \Sigma(S,u)$. Since $d(\ell t)=0$ for all $\ell \in \NN$ and $d(u)=1$, whenever $d\in \Sigma(S,u)$, and as $u$ is an order unit for $S$, we find that $\ell t <_{s} u$, which implies $\ell t \leq u$ by almost unperforation of $S$. 
\end{proof}
Note that this proposition needs almost unperforation to be true. Consider, as constructed in \citep[Example 4.13]{Bosa-Petzka-2018}, the Cuntz semigroup $S=\{0, 1, \infty\}$ equipped with $1+1=\infty$ and the usual ordering. Observe that $S$ has $1$-comparison, but it is not almost unperforated. Note that $u=1$ is an order unit, and that $\ell t \leq 2u = \infty$ for all $t\in S$ and $\ell \in \NN$, such that (i) above holds. However, (iv) is false, since $\infty \not \leq 1$. \\ \\
An interesting related property to the equivalent conditions of Lemma \ref{lemma:inf=0-iff-lt-leq-u} is the notion of $\beta$-comparison introduced by Bosa-Petzka in \citep{Bosa-Petzka-2018}.
\begin{definition}
Let $S$ be an ordered Abelian semigroup. Define for each $x,y\in S$ the quantity
\begin{align*}
\beta(x,y) = \inf\{k/\ell \mid k,\ell \in \NN \mbox{ and } \ell x \leq ky\}.
\end{align*}
We say that $S$ has \emph{$\beta$-comparison} if, whenever $x,y\in S$ satisfies $\beta(x,y)=0$, then $x\leq y$.
\end{definition}
It holds in general that $\beta$-comparison implies $\omega$-comparison, but the above example shows that they are not equivalent properties for ordered Abelian semigroups, cf.\ \citep{Bosa-Petzka-2018}. For simple $C^{\ast}$-algebras, however, they are equivalent by the arguments in \citep[Section 5]{Bosa-Petzka-2018}. Note that almost unperforation implies $\beta$-comparison; the easiest way of seeing this is to note that $\beta(x,y)<1$ if and only if $x<_{s} y$.
\begin{lemma}
Let $S$ be an Abelian semigroup with a distinguished order unit $u$ and let $t\in S$ be arbitrary. Suppose that $S$ has $\beta$-comparison. Then (i)--(iv) in Lemma \ref{lemma:inf=0-iff-lt-leq-u} are equivalent.
\end{lemma}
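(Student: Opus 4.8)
The plan is to recycle the proof of Lemma~\ref{lemma:inf=0-iff-lt-leq-u} almost verbatim, replacing the single place where almost unperforation was invoked by an appeal to $\beta$-comparison. The implications (iv)$\Rightarrow$(iii)$\Rightarrow$(ii) need no hypothesis on $S$: for (iv)$\Rightarrow$(iii) one takes $k=1$, and for (iii)$\Rightarrow$(ii) a single $k$ witnessing (iii) gives $k/\ell\to 0$ as $\ell\to\infty$. Likewise (i)$\Leftrightarrow$(ii) is exactly the content of Corollary~\ref{cor:not-separating-iff-inf-property} and requires nothing beyond $u$ being an order unit. So the only implication whose proof actually uses that $S$ has $\beta$-comparison is (ii)$\Rightarrow$(iv).

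To prove (ii)$\Rightarrow$(iv), I would first observe that (ii) is literally the assertion $\beta(t,u)=0$. The key elementary point is that this forces $\beta(\ell t,u)=0$ for every $\ell\in\NN$: given $\varepsilon>0$, pick $k,m\in\NN$ with $mt\le ku$ and $k/m<\varepsilon/\ell$; then $m(\ell t)=\ell(mt)\le \ell(ku)=(\ell k)u$ while $\ell k/m<\varepsilon$, so $\beta(\ell t,u)<\varepsilon$, and letting $\varepsilon\to 0$ gives $\beta(\ell t,u)=0$. Now $\beta$-comparison applied to the pair $(\ell t,u)$ yields $\ell t\le u$, which is precisely (iv).

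I do not expect a real obstacle: the whole argument rests on the one-line observation that $\beta(\,\cdot\,,u)$ rescales under $x\mapsto\ell x$, after which $\beta$-comparison does all the work. The one point worth a remark is the degenerate possibility that $\Sigma(S,u)=\varnothing$ (equivalently, some multiple of $u$ is properly infinite), in which case (i) holds vacuously; but this case is already absorbed into Corollary~\ref{cor:not-separating-iff-inf-property}, so it needs no separate treatment. It is also worth noting why $\beta$-comparison suffices where the proof of Lemma~\ref{lemma:inf=0-iff-lt-leq-u} needed almost unperforation: that proof passed through the relation $\ell t<_s u$ and then cancelled, whereas here $\beta(\ell t,u)=0$ feeds directly into the defining property of $\beta$-comparison.
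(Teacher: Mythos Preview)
Your proof is correct and follows essentially the same route as the paper: both reduce to showing (ii)$\Rightarrow$(iv), observe that (ii) is precisely $\beta(t,u)=0$, note that this entails $\beta(\ell t,u)=0$ for every $\ell\in\NN$, and then invoke $\beta$-comparison to conclude $\ell t\le u$. Your version is slightly more expansive in justifying the rescaling step and in flagging the degenerate case $\Sigma(S,u)=\varnothing$, but the argument is the same.
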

\begin{proof}
We only need to prove (ii)$\Rightarrow$(iv). It is easily seen that, if $\beta(x,y)=0$, then $\beta(\ell x,y)=0$ for all $\ell\in \NN$. Observe that (ii) is equivalent to $\beta(t,u)=0$, and that this implies $\beta(\ell t, u)=0$ for all $\ell \in \NN$. Since $S$ has $\beta$-comparison, we conclude that $\ell t\leq u$ for any $\ell \in \NN$.
\end{proof}
The next proposition follows immediately from \citep[Lemma 2.4(ii)]{Robert-Rordam-2013}, and we extend the result in the subsequent lemma assuming stable rank one.
\begin{proposition} \label{prop:lt-leq-u-implies-finitely-many-mutually-orth}
Let $A$ be a unital $C^{\ast}$-algebra and assume that $\ell \braket{a}\leq \braket{1_A}$ for some $\ell \in \NN$ and $a\in A$. Then, for any $\varepsilon>0$, there exists mutually orthogonal, mutually equivalent positive elements $e_1,\ldots, e_{\ell}\in A$ such that $e_i \sim (a-\varepsilon)_{+}$ for all $i=1,\ldots, \ell$.
\end{proposition}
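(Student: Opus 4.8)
The plan is to unwind the Cuntz-semigroup inequality $\ell\braket{a}\leq \braket{1_A}$ into a concrete relation between positive elements of $A$ and then read off the elements $e_i$. We may assume $a\in A_+$, as only $\braket{a}$ and the cutoffs $(a-\varepsilon)_+$ are involved (if $a$ is not already positive, replace it by $a^{\ast}a$). By definition of the order on $\mathrm{W}(A)$, the hypothesis says exactly that $\mathrm{diag}(a,\ldots,a)\in M_{\ell}(A)_+$, with $a$ repeated $\ell$ times, is Cuntz subequivalent to $1_A$, the latter regarded as sitting in the top-left corner of $M_{\ell}(A)$.

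First I would apply Rørdam's characterisation of Cuntz subequivalence in the sharp form: for the given $\varepsilon>0$ there are $\delta\in(0,1)$ and $v\in M_{\ell}(A)$ with
\begin{align*}
\mathrm{diag}\big((a-\varepsilon)_+,\ldots,(a-\varepsilon)_+\big) = v\,(1_A-\delta)_+\,v^{\ast},
\end{align*}
where I have used $\big(\mathrm{diag}(a,\ldots,a)-\varepsilon\big)_+ = \mathrm{diag}\big((a-\varepsilon)_+,\ldots,(a-\varepsilon)_+\big)$. Since $(1_A-\delta)_+=(1-\delta)\,1_A$ is a scalar multiple of the projection supported in the top-left corner, only the first column of $v$ contributes to the right-hand side. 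Writing $r_i\defeq\sqrt{1-\delta}\,v_{i1}\in A$ for $i=1,\ldots,\ell$, the displayed identity becomes $r_ir_i^{\ast}=(a-\varepsilon)_+$ for each $i$, and $r_ir_j^{\ast}=0$ for $i\neq j$.

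Then I would set $e_i\defeq r_i^{\ast}r_i\in A$. These are positive; for $i\neq j$ one has $e_ie_j=r_i^{\ast}(r_ir_j^{\ast})r_j=0$, so the $e_i$ are pairwise orthogonal; and $e_i=r_i^{\ast}r_i\sim r_ir_i^{\ast}=(a-\varepsilon)_+$ for every $i$, since $x^{\ast}x$ and $xx^{\ast}$ always represent the same element of $\mathrm{W}(A)$. In particular $e_i\sim e_j$ for all $i,j$, so the $e_i$ are mutually equivalent, which completes the argument. This is, in essence, \citep[Lemma 2.4(ii)]{Robert-Rordam-2013} specialised to $b=1_A$, for which $\overline{1_AA1_A}=A$.

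I do not anticipate a genuine obstacle: the argument is a short computation once the right tool is invoked. The two points that need care are using the sharp version of Rørdam's lemma — the one producing an honest equality $(x-\varepsilon)_+=v(y-\delta)_+v^{\ast}$ rather than mere subequivalence — and the bookkeeping that collapses the matrix $v\in M_{\ell}(A)$ to its first column, which is exactly what ensures the $e_i$ lie in $A$ itself rather than in a matrix amplification.
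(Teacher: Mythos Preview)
Your argument is correct and coincides with the paper's approach: the paper simply cites \citep[Lemma 2.4(ii)]{Robert-Rordam-2013} without proof, and what you have written is precisely that lemma specialised to $b=1_A$, with the details spelled out. No divergence in method.
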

\begin{lemma} \label{lemma:lt-leq-u-implies-mutually-orth-seq}
Let $A$ be a unital $C^{\ast}$-algebra with stable rank one and assume that there exists $a\in A$ such that $\ell \braket{a}\leq \braket{1_A}$ for all $\ell \in \NN$. Then, for any $\varepsilon>0$, there exists a sequence mutually orthogonal, mutually equivalent positive elements $(e_{\ell})_{\ell \in \NN}\subseteq A$ such that $e_{\ell}\sim (a-\varepsilon)_{+}$ for all $\ell \in \NN$.
\end{lemma}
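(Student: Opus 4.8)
The plan is to extract, from the hypothesis, a single \emph{stable} hereditary $C^{\ast}$-subalgebra of $A$ into which a copy of $(a-\varepsilon)_+$ can be placed in each of countably many mutually orthogonal corners, and then to read the sequence $(e_\ell)$ off the stability. First I would reduce: set $d\defeq(a-\varepsilon/2)_+$, so that $d\precsim a$ (hence $\ell\braket{d}\le\ell\braket{a}\le\braket{1_A}$ for every $\ell$) while $(a-\varepsilon)_+=(d-\varepsilon/2)_+$. Thus it is enough to prove the following: \emph{if $d\in A_+$ satisfies $\ell\braket{d}\le\braket{1_A}$ for all $\ell\in\NN$, then for every $\delta>0$ there is a sequence $(e_\ell)_{\ell\in\NN}$ of mutually orthogonal positive elements of $A$ with $e_\ell\sim(d-\delta)_+$} --- the case $d=0$ being trivial, and mutual equivalence being automatic once each $e_\ell$ is Cuntz equivalent to the one element $(d-\delta)_+$.

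Next I would manufacture the absorbing element. Since $\mathrm{Cu}(A)$ is closed under suprema of increasing sequences, $s\defeq\sup_{\ell\in\NN}\ell\braket{d}$ exists in $\mathrm{Cu}(A)$ and $s\le\braket{1_A}$; one checks at once that $s=\braket{d}+s=\infty\cdot s$, so $2s=s$. Because $A$ has stable rank one, every element of $\mathrm{Cu}(A)$ dominated by $\braket{1_A}$ is represented by a positive element of $A$ (see \citep{Antoine-Perera-Thiel-2019}), so I may fix $c\in A_+$ with $\braket{c}=s$ and put $C\defeq\overline{cAc}$: a $\sigma$-unital hereditary $C^{\ast}$-subalgebra of $A$ with strictly positive element $c$, for which $\braket{c}=\infty\braket{d}=2\braket{c}$ in $\mathrm{Cu}(A)$.

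The main step would be to prove that $C$ is stable, and this is where stable rank one is used in an essential way. The identity $\braket{c}=\infty\braket{d}$ says that the strictly positive element $c$ absorbs arbitrarily many mutually orthogonal copies of a fixed positive element at the level of the Cuntz semigroup, and stable rank one provides enough cancellation in $\mathrm{Cu}(A)$ to upgrade this to genuine stability of $C$ via the Hjelmborg--Rørdam criterion \citep{Hjelmborg-Rordam-1998}: given $b\in C_+$ and $\eta>0$, one combines $\braket{b}\le\braket{c}$, the resulting $2\braket{b}\le2\braket{c}=\braket{c}$, and the splitting (refinement) property of $\mathrm{Cu}(A)$ which also underlies Proposition~\ref{prop:lt-leq-u-implies-finitely-many-mutually-orth} (cf.\ \citep[Lemma~2.4]{Robert-Rordam-2013}) to produce a positive element of $C$ that is orthogonal to $b$ and Cuntz-dominates $(b-\eta)_+$. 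I expect this to be the hard part: the splitting lemma invariably costs an $\varepsilon$-cutoff, and the subtlety is to carry out the bookkeeping so as to land an element genuinely orthogonal to the prescribed $b$, not merely orthogonal to some element that dominates it. (A direct inductive construction of the $e_\ell$ with a "reserve" element, avoiding stability, seems to fail precisely because the reserve is forced to lose Cuntz comparison at every step, which is consistent with $\braket{c}$ never being compact, as $A$ is stably finite.)

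Finally, once $C$ is stable we have $C\cong C\otimes\mathbb{K}$. Choosing mutually orthogonal rank-one projections $(p_\ell)_{\ell\in\NN}$ in $\mathbb{K}$ gives mutually orthogonal hereditary subalgebras $C_\ell\defeq C\otimes p_\ell$ of $C$, each isomorphic to $C$. Since $\braket{(d-\delta)_+}\le\braket{d}\le\braket{c}$ and $C=\overline{cAc}$, stable rank one of $A$ lets me realize the Cuntz class of $(d-\delta)_+$ inside $C$, i.e.\ there is $e\in C_+$ with $e\sim(d-\delta)_+$. Setting $e_\ell\defeq e\otimes p_\ell$ (viewed in $C\otimes\mathbb{K}\cong C$) yields mutually orthogonal positive elements of $A$ with $e_\ell\sim(d-\delta)_+$ for all $\ell$, mutually equivalent even in the strong sense, being intertwined by the partial isometries $e^{1/2}\otimes u_{\ell m}$ with $u_{\ell m}\in\mathbb{K}$ a partial isometry from $p_m$ to $p_\ell$. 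Applying this with $d=(a-\varepsilon/2)_+$ and $\delta=\varepsilon/2$ produces the required sequence.
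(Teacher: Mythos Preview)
Your strategy is genuinely different from the paper's: you aim to manufacture a single stable hereditary subalgebra $C=\overline{cAc}$ and then read the orthogonal sequence off the tensor factor $\mathbb{K}$, whereas the paper builds the sequence directly by an inductive construction. Your preliminary steps are fine: the element $s=\sup_\ell \ell\langle d\rangle$ does exist in $\mathrm{Cu}(A)$, satisfies $2s=s\le\langle 1_A\rangle$, and (by stable rank one) is represented by some $c\in A_+$; and once $C$ is known to be stable, your extraction of the $e_\ell$ is correct.

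The gap is exactly where you place it. Your sketch for stability --- from $2\langle b\rangle\le\langle c\rangle$ apply the Robert--R\o rdam splitting lemma --- only produces two mutually orthogonal copies of $(b-\eta)_+$ \emph{somewhere} in $C$, not a copy orthogonal to the \emph{prescribed} $b$; and the Hjelmborg--R\o rdam criterion demands the latter. You name this subtlety but do not resolve it, and it does not follow from the ingredients you list. (A separate argument that $2\langle c\rangle=\langle c\rangle$ forces $\overline{cAc}$ to be stable in stable rank one is plausible, for instance via the Coward--Elliott--Ivanescu identification of $\mathrm{Cu}(A)$ with Hilbert modules and the relation $X\cong X\oplus X$, but this is substantial extra input you have not supplied.)

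More importantly, you dismiss the direct inductive route too quickly. The paper shows it \emph{does} work: given orthogonal $e_1,\ldots,e_n$ with $e_i\sim(a-\varepsilon)_+$, set $b=\sum e_i$ and use that $(b-\delta)_+ + h_\delta(b)$ is invertible to get
\[
n\langle(a-\varepsilon)_+\rangle+\langle h_{\delta}(b)\rangle \;\ge\; \langle 1_A\rangle \;\ge\; n\langle a\rangle+\langle a\rangle.
\]
The R\o rdam--Winter cancellation theorem for stable rank one then cancels $n\langle(a-\varepsilon)_+\rangle$ against $n\langle a\rangle$ to give $\langle a\rangle\le\langle h_{\delta}(b)\rangle$, so the next element can be placed inside $\overline{h_{\delta}(b)Ah_{\delta}(b)}$, orthogonal to the $(e_i-\delta)_+$. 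Each step costs only a cutdown $\delta_n$; taking $\sum_n\delta_n<\varepsilon$ and replacing $e_n$ by $(e_n-\sum_{k\ge n}\delta_k)_+$ at the end yields a genuinely orthogonal, mutually equivalent sequence. So the ``reserve'' does not lose Cuntz size in any fatal way --- cancellation is precisely what prevents that --- and the paper's approach is both complete and more elementary than going through stability.
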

\begin{proof}
Suppose that $\ell\braket{a}\leq \braket{1_A}$ for all $\ell \in \NN$. Let $n\in \NN$ be arbitrary, then we can use Proposition \ref{prop:lt-leq-u-implies-finitely-many-mutually-orth} to construct $n$ pairwise orthogonal and pairwise equivalent positive elements $e_1,\ldots, e_n$ such that $e_i\sim (a-\varepsilon)_{+}$ for some $\varepsilon>0$. Let $b=\sum_{i=1}^{n}e_i$ and define for each $\delta>0$ the function
\begin{align*}
h_{\delta}(t) = \begin{cases}
\frac{\delta-t}{\delta} &\mbox{ if } 0\leq t \leq \delta \\
0 &\mbox{ if } t\geq \delta
\end{cases}.
\end{align*}
Observe that $(b-\delta)_{+}\perp h_{\delta}(b)$ and that $b+h_{\delta}(b)$ is invertible. Let $\delta_{n}>0$ be arbitrary, then we obtain the following inequalities:
\begin{align*}
n\braket{(a-\varepsilon)_{+}}+\braket{h_{\delta_{n}}(b)} \geq \braket{b}+\braket{h_{\delta_{n}}(b)}\geq \braket{u} \geq (n+1)\braket{a} = n\braket{a} + \braket{a}.
\end{align*}
By the cancellation theorem of \citep[Theorem 4.3]{Rordam-Winter-2010}, we find that $\braket{a}\leq \braket{h_{\delta_{n}}(b)}$. Using \citep[Lemma 2.4]{Robert-Rordam-2013}, we construct $c\in \overline{h_{\delta_{n}}(b)Ah_{\delta_{n}}(b)}$ such that $c\sim (a-(\varepsilon+\delta_{n}))_{+}$. In particular, we find that $c\perp (e_i-\delta_{n})_{+}$ for all $i=1,\ldots, n$, and that, for any $i=1,\ldots, n$,
\begin{align*}
c\sim (a-(\varepsilon+\delta_{n}))_{+} = ((a-\varepsilon)_{+}-\delta_{n})_{+} \sim (e_i-\delta_{n})_{+}.
\end{align*}
We have hence shown that, if we can construct $n$ pairwise orthogonal and pairwise equivalent positive elements $e_1,\ldots, e_n$, then we can, for any $\delta>0$, construct a positive element $e_{n+1}\in A$ such that $e_{n+1}\perp (e_{i}-\delta)_{+}$ and $e_{n+1}\sim (e_{i}-\delta)_{+}$ for any $i=1,\ldots, n$. Now we use this process inductively. Start with any positive element $e_1\in A$ and let $\delta_{1}>0$. Use the above method to construct an element $e_{2}\in A$ with the properties that $e_{2}\perp (e_{1}-\delta_{1})_{+}$ and $e_{2}\sim (e_{1}-\delta_{1})_{+}$. Now let $\delta_{2}>\delta_{1}$ and construct $e_{3}\in A$ such that $e_{3}\sim (e_{2}-\delta_{2})_{+}\sim (e_{1}-(\delta_{1}+\delta_{2}))_{+}$ and that these three elements are mutually orthogonal. Continuing this process, we can thus for any increasing sequence $0<\delta_{1}<\delta_{2}<\cdots$ construct a sequence of positive elements $(e_{n})_{n\in \NN}\subseteq A$ with the property that, for any $m>n$, 
\begin{align*}
e_{m} \sim (e_{n}-\sum_{k=n}^{m-1}\delta_{k})_{+}
\end{align*}
and that these are orthogonal. Since we have no restrictions on the choice of the sequence $(\delta_{n})_{n\in \NN}$, we may take it to be any sequence such that the series $\delta=\sum_{n=1}^{\infty}\delta_{n}$ converges with limit $\delta < \norm{e_1}$. Define $e_{n}' = (e_{n}-\sum_{k=n}^{\infty}\delta_{k}))_{+}$ for each $n\in \NN$, then we see for any $m>n$ that
\begin{align*}
e_{n}' = (e_{n}-\sum_{k=n}^{\infty}\delta_{k})_{+} \sim (e_{n}-(\sum_{k=n}^{m-1}\delta_{k}+\sum_{k=m}^{\infty}\delta_{k}))_{+} \sim (e_{m}-\sum_{k=m}^{\infty}\delta_{k})_{+} = e_{m}'.
\end{align*}
The proof of mutual orthogonality goes in a similar fashion. We have thus proved the existence of a sequence of pairwise orthogonal and pairwise equivalent elements in $A$.
\end{proof}
\begin{theorem} \label{thm:no-faithful-trace-iff-no-stable-subalgebra}
Let $A$ be a separable, unital, exact $C^{\ast}$-algebra with stable rank one. Assume that $\mathrm{W}(A)$ is almost unperforated. The following conditions are equivalent:
\begin{enumerate}
\item $A$ admits no faithful tracial states,
\item $A$ contains a non-zero stable $C^{\ast}$-subalgebra,
\item $A$ contains a non-zero stable hereditary $C^{\ast}$-subalgebra.
\end{enumerate}
\end{theorem}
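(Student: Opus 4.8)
I would first dispose of the easy implications. That $\mathrm{(iii)}\Rightarrow\mathrm{(ii)}$ is trivial, since a hereditary $C^{\ast}$-subalgebra is in particular a $C^{\ast}$-subalgebra. For $\mathrm{(ii)}\Rightarrow\mathrm{(i)}$ I would argue by contraposition: a faithful tracial state is in particular a separating family of tracial states, so if $A$ has one then, by Proposition \ref{prop:faithful-trace-implies-stably-finite-and-no-stable}, it has no non-zero stable $C^{\ast}$-subalgebra. The content of the theorem is thus $\mathrm{(i)}\Rightarrow\mathrm{(iii)}$, and here the plan is to locate inside $A$ a sequence of non-zero, mutually orthogonal, mutually equivalent positive elements and to show that the hereditary $C^{\ast}$-subalgebra they generate is stable.

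So assume $A$ has no faithful tracial state. As $A$ is separable and unital, Proposition \ref{prop:faithful-trace-iff-separating-family} shows that $A$ has no separating family of tracial states, so there is a non-zero $b\in A_{+}$ with $\tau(b)=0$ for every $\tau\in T(A)$. For such $\tau$ one has $b\in I_{\tau}$, hence $b^{1/n}\in I_{\tau}$ and $\tau(b^{1/n})=0$ for every $n$ (as $I_{\tau}$ is a $C^{\ast}$-subalgebra containing $b$), so that $d_{\tau}(\braket{b})=\lim_{n}\tau(b^{1/n})=0$. Since $A$ is exact, every lower semi-continuous dimension function on $A$ is of the form $d_{\tau}$ for some $\tau\in T(A)$, and I would conclude that $d(\braket{b})=0$ for every $d\in\Sigma_{\mathrm{lsc}}(\mathrm{W}(A),\braket{1_A})$.

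To invoke Lemma \ref{lemma:inf=0-iff-lt-leq-u} I need this vanishing for \emph{all} states of $\mathrm{W}(A)$, not only the lower semi-continuous ones, and the device is to pass to a cut-down. Given an arbitrary $d\in\Sigma(\mathrm{W}(A),\braket{1_A})$, set $\bar d(\braket{x})=\sup_{\eps>0}d(\braket{(x-\eps)_{+}})$. Using the standard identities $(x\oplus y-\eps)_{+}=(x-\eps)_{+}\oplus(y-\eps)_{+}$ and $((x-\eps)_{+}-\delta)_{+}=(x-\eps-\delta)_{+}$ together with the fact that $x\precsim y$ if and only if for each $\eps>0$ there is $\delta>0$ with $(x-\eps)_{+}\precsim(y-\delta)_{+}$ (cf.\ \citep{Antoine-Perera-Thiel-2019}), one checks that $\bar d$ is a well-defined, additive, order-preserving, lower semi-continuous dimension function normalised at $\braket{1_A}$ with $\bar d\le d$. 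By the previous paragraph $\bar d(\braket{b})=0$, i.e.\ $d(\braket{(b-\eps)_{+}})=0$ for every $\eps>0$. Fixing $\eps_0\in(0,\norm{b})$ and putting $a=(b-\eps_0)_{+}\neq 0$, this gives $d(\braket{a})=0$ for all $d\in\Sigma(\mathrm{W}(A),\braket{1_A})$. Since $\mathrm{W}(A)$ is almost unperforated and $\braket{1_A}$ is an order unit, Lemma \ref{lemma:inf=0-iff-lt-leq-u} then yields $\ell\braket{a}\le\braket{1_A}$ for all $\ell\in\NN$.

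Because $A$ has stable rank one, Lemma \ref{lemma:lt-leq-u-implies-mutually-orth-seq} applied to $a$ produces, once $\eps>0$ is small enough that $(a-\eps)_{+}\neq 0$, a sequence $(e_{\ell})_{\ell\in\NN}$ of non-zero, mutually orthogonal, mutually equivalent positive elements of $A$ with $e_{\ell}\sim(a-\eps)_{+}$. I would then take $B$ to be the hereditary $C^{\ast}$-subalgebra of $A$ generated by $\{e_{\ell}:\ell\in\NN\}$, which is non-zero and $\sigma$-unital, and the remaining task --- which I expect to be the main obstacle --- is to verify that $B$ is stable. Morally $B\cong\overline{e_1Ae_1}\otimes\mathbb{K}$, and I would derive this from the standard characterisations of stability for $\sigma$-unital $C^{\ast}$-algebras (as in \citep{Hirshberg-Rordam-Winter-2007}) combined with $N\braket{e_1}=\braket{e_{N+1}}+\dots+\braket{e_{2N}}$ and the Robert--Rørdam-type translation results valid in stable rank one (\citep{Robert-Rordam-2013}); the care required is in pinning down the precise form of equivalence of the $e_{\ell}$ furnished by Lemma \ref{lemma:lt-leq-u-implies-mutually-orth-seq} --- strong enough to link up the corners of $B$ rather than merely Cuntz equivalence --- and in handling elements of $B$ whose support meets infinitely many of the $e_{\ell}$. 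Granting this, $B$ is a non-zero stable hereditary $C^{\ast}$-subalgebra of $A$, which gives $\mathrm{(iii)}$ and closes the cycle $\mathrm{(i)}\Rightarrow\mathrm{(iii)}\Rightarrow\mathrm{(ii)}\Rightarrow\mathrm{(i)}$.
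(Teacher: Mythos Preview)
Your argument tracks the paper's closely: find a positive element on which all tracial states vanish, upgrade this to vanishing under all dimension functions, apply Lemma~\ref{lemma:inf=0-iff-lt-leq-u} to get $\ell\braket{a}\le\braket{1_A}$ for all $\ell$, feed this into Lemma~\ref{lemma:lt-leq-u-implies-mutually-orth-seq}, and take the hereditary subalgebra generated by the resulting orthogonal equivalent sequence. The one genuine deviation is in the upgrade step: the paper invokes de~Silva's density theorem (lower semi-continuous dimension functions are dense in $\Sigma(\mathrm{W}(A),\braket{1_A})$ when $\mathrm{W}(A)$ is almost unperforated), whereas you build the lower semi-continuous regularisation $\bar d$ by hand and observe $\bar d\le d$. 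Your route is correct and arguably more self-contained; the paper's is shorter given the citation.

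Where you diverge from the paper is in treating the stability of $B=\mathrm{her}(\{e_\ell\})$ as ``the main obstacle'' to be attacked via \citep{Hirshberg-Rordam-Winter-2007} and \citep{Robert-Rordam-2013}. The paper disposes of this in one line by citing \citep{Hjelmborg-Rordam-1998}: a $\sigma$-unital $C^{\ast}$-algebra generated as a hereditary subalgebra by a sequence of mutually orthogonal, mutually equivalent (in the sense $e_i=x^{\ast}x$, $e_j=xx^{\ast}$) positive elements is stable---this is exactly the kind of structural criterion Hjelmborg--R\o rdam establish, and the equivalence $\sim$ produced by Lemma~\ref{lemma:lt-leq-u-implies-mutually-orth-seq} is of this strong type (coming from \citep[Lemma~2.4]{Robert-Rordam-2013}), not mere Cuntz equivalence. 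So your worry about ``pinning down the precise form of equivalence'' is legitimate but already handled, and you should replace the speculative final paragraph with a direct appeal to \citep{Hjelmborg-Rordam-1998}. The paper also uses the same reference for the equivalence $\mathrm{(ii)}\Leftrightarrow\mathrm{(iii)}$, which is slightly more than your trivial $\mathrm{(iii)}\Rightarrow\mathrm{(ii)}$, though your cycle does not require the reverse direction.
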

\begin{proof}
The equivalence (ii)$\Leftrightarrow$(iii) holds by \citep{Hjelmborg-Rordam-1998}. We saw that (ii)$\Rightarrow$(i) in Proposition \ref{prop:faithful-trace-implies-stably-finite-and-no-stable}, so we only need to show (i)$\Rightarrow$(ii). By Proposition \ref{prop:sep-family-traces-iff-sep-family-dim-functions}, there exists some $\braket{a}\in \mathrm{W}(A)$ such that $d(\braket{a})=0$ for all $d\in \Sigma_{\mathrm{lsc}}(\mathrm{W}(A),\braket{1_A})$. Since $\mathrm{W}(A)$ is assumed to be almost unperforated, the set $\Sigma_{\mathrm{lsc}}(\mathrm{W}(A),\braket{1_A})$ is dense in $\Sigma(\mathrm{W}(A),\braket{1_A})$ in the topology of pointwise convergence \citep[Theorem 3.3]{de-Silva-2016}, and hence $d(\braket{a})=0$ for all $d\in \Sigma(\mathrm{W}(A),\braket{1_A})$. Combining Corollary \ref{cor:not-separating-iff-inf-property} and Lemma \ref{lemma:inf=0-iff-lt-leq-u} gives us that $\ell \braket{a}\leq \braket{1_A}$ for all $\ell \in \NN$. By Lemma \ref{lemma:lt-leq-u-implies-mutually-orth-seq}, this implies the existence of a sequence of mutually orthogonal, mutually equivalent positive elements $(e_\ell)_{\ell \in \NN}$ in $A$. It then follows from \citep{Hjelmborg-Rordam-1998} that the hereditary $C^{\ast}$-algebra generated by $\{e_{\ell}\}_{\ell \in \NN}$ is a stable $C^{\ast}$-subalgebra of $A$.
\end{proof}
Note that Theorem \ref{thm:no-faithful-trace-iff-no-stable-subalgebra} provides a converse result to Proposition \ref{prop:faithful-trace-implies-stably-finite-and-no-stable}. It is, hence, of independent interest, but one can also use the theorem to give an equivalent formulation of QFTS. We shall, however, use a different method, with which we can avoid the assumption of stable rank one; do note, however, that this assumption would make the assumption of no properly infinite unital quotients vacuously true in the results to come. \\ \\
We shall in the sequel use the following result due to Hirshberg-Rørdam-Winter, \citep[Theorem 3.6]{Hirshberg-Rordam-Winter-2007}, which here includes a slight modification due to Haagerup in that all quasitraces on exact $C^{\ast}$-algebras are tracial states.
\begin{theorem}[Hirshberg-Rørdam-Winter, 2007] \label{thm:hirchberg-rordam-winter-stable-hereditary}
Let $A$ be a separable, unital, exact $C^{\ast}$-algebra for which $\mathrm{Cu}(A)$ has $\omega$-comparison. Let $B\subseteq A$ be a hereditary $C^{\ast}$-subalgebra. Then $B$ is stable if and only if $B$ admits no non-zero tracial states and no quotient of $B$ is unital.
\end{theorem}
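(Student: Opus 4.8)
The plan is to prove the easy implication by hand and to reduce the hard one to a Cuntz-theoretic stability criterion. First, suppose $B$ is stable, say $B\cong B_0\otimes\mathbb{K}$. Every ideal of $B_0\otimes\mathbb{K}$ has the form $I\otimes\mathbb{K}$, so every quotient of $B$ is again stable, and a nonzero stable $C^{\ast}$-algebra has no unit; and if $\tau$ were a nonzero bounded trace on $B$, I would pick $a\in B_+$ with $\tau(a)>0$ and infinitely many mutually orthogonal copies $a_1,a_2,\dots$ of $a$ obtained as translates in the $\mathbb{K}$-direction, so that $\tau\bigl(\sum_{i=1}^{N}a_i\bigr)=N\tau(a)\to\infty$ while $\norm{\sum_{i=1}^{N}a_i}=\norm{a}$ stays bounded --- a contradiction. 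This is the mechanism behind Proposition \ref{prop:faithful-trace-implies-stably-finite-and-no-stable}. So the content lies in the converse: assuming $B$ has no nonzero tracial state and no unital quotient, show that $B$ is stable.

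For the converse I would invoke the stability criterion of Hjelmborg and Rørdam \citep{Hjelmborg-Rordam-1998}: a separable $C^{\ast}$-algebra $B$ is stable if and only if, for every $a\in B_+$ and every $\varepsilon>0$, there is $b\in B_+$ with $\norm{ab}<\varepsilon$ and $\braket{(a-\varepsilon)_+}\le\braket{b}$ in $\mathrm{Cu}(B)$. Fixing $a$ and $\varepsilon$, I would try to build such a $b$ inside a hereditary subalgebra of $B$ that is approximately orthogonal to $a$. The hypothesis that $B$ has no nonzero tracial state turns, via exactness and Haagerup's theorem \citep{Haagerup-Quasitraces-2014} together with the Blackadar-Handelman correspondence \citep{Blackadar-Handelman-1982} between quasitraces and lower semicontinuous dimension functions, into the statement that $\mathrm{Cu}(B)$ has no nonzero lower semicontinuous functional; exploiting $\omega$-comparison I would aim to conclude that $\mathrm{Cu}(B)$ has in fact no nonzero functional at all, in parallel with the use of \citep{de-Silva-2016} under almost unperforation in Theorem \ref{thm:no-faithful-trace-iff-no-stable-subalgebra}. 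Feeding this into the Goodearl-Handelman dichotomy (Corollary \ref{cor:not-separating-iff-inf-property}) relative to an order unit of the relevant hereditary subalgebra would give, for each $n$, a comparison $n\braket{(a-\varepsilon)_+}\le\braket{c}$ for a suitable $c$, with $\omega$-comparison converting the vanishing-dimension-function data into a genuine inequality in $\mathrm{Cu}(B)$ --- the role almost unperforation plays in Lemma \ref{lemma:inf=0-iff-lt-leq-u}. From such comparisons I would then extract the orthogonal dominating $b$ by the inductive ``adjoin one more orthogonal equivalent copy'' argument of Lemma \ref{lemma:lt-leq-u-implies-mutually-orth-seq}, now carried out uniformly in $a$ and $\varepsilon$ and without invoking stable rank one.

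The hard part will be securing enough room orthogonal to $a$ at every step of this induction, and this is exactly where the no-unital-quotient hypothesis enters and where $\omega$-comparison --- rather than mere almost unperforation --- is the correct assumption. I would need to rule out that in some quotient $B/J$ the image of $(a-\varepsilon)_+$ behaves like (a direct summand of) a unit, since then no suitable orthogonal $b$ could be found there; equivalently, I would need that every hereditary subalgebra in which a $b$ is to be built again has no nonzero functional and no unital quotient, so that a ``halving'' step can always be iterated, a unital quotient being precisely the obstruction. Making this dichotomy precise in the non-unital, relative Cuntz-semigroup setting while tracking the hereditary subalgebras involved is the technical core of \citep[Theorem 3.6]{Hirshberg-Rordam-Winter-2007}; the only additional input here is Haagerup's identification of quasitraces with tracial states on exact $C^{\ast}$-algebras, which is what allows the no-trace hypothesis to be stated for tracial states.
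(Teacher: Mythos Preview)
The paper does not prove this theorem. It is quoted verbatim as \citep[Theorem 3.6]{Hirshberg-Rordam-Winter-2007}, with the sole modification being that the original hypothesis ``no non-zero bounded 2-quasitrace'' is replaced by ``no non-zero tracial state'' via Haagerup's theorem that quasitraces on exact $C^{\ast}$-algebras are traces. There is therefore nothing in the present paper to compare your argument against; your final paragraph already correctly identifies that the technical core is outsourced to \citep{Hirshberg-Rordam-Winter-2007} and that the only added input here is Haagerup's result, which is exactly what the paper says in the sentence introducing the theorem.
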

The next result is well-known, but we supply a proof for the sake of completeness.
\begin{proposition} \label{prop:regularity-of-prop-inf}
Let $A$ be a unital $C^{\ast}$-algebra such that $\mathrm{W}(A)$ is almost unperforated. If $M_n(A)$ is properly infinite for some $n\in \NN$, then $A$ is properly infinite.
\end{proposition}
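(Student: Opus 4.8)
The plan is to pass from $C^{\ast}$-algebraic proper infiniteness to inequalities between multiples of $\braket{1_A}$ in $\mathrm{W}(A)$ and then to invoke almost unperforation. Write $x=\braket{1_A}\in \mathrm{W}(A)$ and note that $\braket{1_{M_n(A)}}=nx$. Recalling that a unital $C^{\ast}$-algebra $B$ is properly infinite precisely when $1_B\oplus 1_B$ is Cuntz-subequivalent to $1_B$, and that for projections Cuntz subequivalence coincides with Murray--von Neumann subequivalence to a subprojection, the hypothesis that $M_n(A)$ is properly infinite translates into the single inequality $2nx\le nx$ in $\mathrm{W}(A)$, while the conclusion that $A$ is properly infinite is exactly $2x\le x$.

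First I would iterate the given inequality. Since addition and the order on $\mathrm{W}(A)$ are compatible, adding $2nx\le nx$ to itself gives $4nx\le 2nx$, hence $4nx\le 2nx\le nx$. Next I would use the trivial numerical estimate $2n+2\le 4n$, valid because $n\ge 1$, to deduce $(n+1)(2x)=(2n+2)x\le 4nx\le nx$. By the definition of the relation $<_{s}$ recalled just before the notion of $n$-comparison, this says exactly that $2x<_{s}x$, with witness $n$.

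Finally, I would apply almost unperforation of $\mathrm{W}(A)$ — equivalently, $0$-comparison, i.e.\ the property that $(m+1)y\le my'$ for some $m\in\NN$ implies $y\le y'$ — to the inequality $(n+1)(2x)\le nx$ to conclude $2x\le x$, that is, $\braket{1_A}+\braket{1_A}\le \braket{1_A}$. As noted above, this means $1_A$ is a properly infinite projection, so $A$ is properly infinite.

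I do not expect a serious obstacle here. The only steps requiring (routine) care are the dictionary between proper infiniteness of $M_n(A)$ and the Cuntz-semigroup inequality $2nx\le nx$ — in particular the reduction of Cuntz comparison of projections to Murray--von Neumann comparison — and the elementary bookkeeping with integer multiples needed to bring the inequality into the form $(n+1)(2x)\le nx$ to which $0$-comparison directly applies.
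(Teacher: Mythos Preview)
Your proof is correct and follows essentially the same route as the paper: translate proper infiniteness of $M_n(A)$ into an inequality between multiples of $\braket{1_A}$, amplify it to the form $(n+1)(2\braket{1_A})\le n\braket{1_A}$, and apply almost unperforation. The only cosmetic difference is that the paper starts from the weaker consequence $(n+1)\braket{1_A}\le n\braket{1_A}$ and reaches $(2n+2)\braket{1_A}\le n\braket{1_A}$ by induction on $k$ in $(n+k)\braket{1_A}\le n\braket{1_A}$, whereas you start from $2n\braket{1_A}\le n\braket{1_A}$ and double once; both arrive at the same inequality before invoking $0$-comparison.
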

\begin{proof}
As $M_n(A)$ is properly infinite, we have that $(n+1)\braket{1_A}\leq n\braket{1_A}$. By induction , $(n+k)\braket{1_A}\leq n\braket{1_A}$ for all $k\in \NN$, and putting $k=n+2$ gives us $2(n+1)\braket{1_A}\leq n\braket{1_A}$. Almost unperforation of $\mathrm{W}(A)$ implies that $2\braket{1_A}=\braket{1_A}$, i.e., $A$ is properly infinite.
\end{proof}
\begin{proposition} \label{prop:no-faithful-trace}
Let $A$ be a separable, unital, exact $C^{\ast}$-algebra such that $\mathrm{Cu}(A)$ is almost unperforated. If $A$ has no faithful tracial state, then either $A$ has a stable ideal or a unital, properly infinite quotient (or both).
\end{proposition}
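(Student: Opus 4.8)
The plan is to obtain the desired stable ideal or unital properly infinite quotient from a single positive element that is annihilated by every tracial state, using the Hirshberg--Rørdam--Winter dichotomy (Theorem~\ref{thm:hirchberg-rordam-winter-stable-hereditary}) in place of the stable rank one argument used for Theorem~\ref{thm:no-faithful-trace-iff-no-stable-subalgebra}.

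Since $A$ is separable and unital and has no faithful tracial state, Proposition~\ref{prop:faithful-trace-iff-separating-family} shows that $A$ has no separating family of tracial states, i.e.\ $\bigcap_{\tau\in T(A)}I_\tau\neq\{0\}$. Picking a nonzero element there and replacing it by $a^{\ast}a$, we obtain $a\in A_{+}\setminus\{0\}$ with $\tau(a)=0$ for every $\tau\in T(A)$. For such $\tau$ and any $b\in A$ we then have $\tau(b^{\ast}ab)=\tau(abb^{\ast})\leq\norm{bb^{\ast}}\,\tau(a)=0$, so $\tau$ vanishes on $\overline{aAa}$; and since $\tau(xay)=\tau(a^{1/2}yxa^{1/2})$ with $a^{1/2}yxa^{1/2}\in\overline{aAa}$ for all $x,y\in A$, we conclude that $\tau$ vanishes on the ideal $I\defeq\overline{AaA}$. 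Hence $I$ carries no nonzero bounded trace: a normalisation of any such trace would extend canonically (as recalled before Proposition~\ref{prop:permanence-properties-sep-family}) to a tracial state on $A$ not vanishing on $I$, contradicting the above.

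Almost unperforation of $\mathrm{Cu}(A)$ gives $0$-comparison and hence $\omega$-comparison, so Theorem~\ref{thm:hirchberg-rordam-winter-stable-hereditary} applies to the hereditary subalgebra $I$: it is stable if and only if it admits no nonzero tracial state and no unital quotient. The first condition has just been verified. If $I$ is stable we are done, since $I\neq\{0\}$ is then a stable ideal of $A$. Otherwise $I$ has a unital quotient $D\defeq I/J$; here $J$ is automatically an ideal of $A$ (ideals of ideals of $C^{\ast}$-algebras are ideals), and since $D$ is a unital ideal of the unital algebra $A/J$ it is a direct summand, $A/J\cong D\oplus A/I$, so $D$ is in particular a quotient of $A$. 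It thus suffices to prove that $D$ is properly infinite.

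This last point is the crux. First, $D$ is separable, unital and exact, and $\mathrm{Cu}(D)$ is again almost unperforated: this passes to the ideal $I$ because $\mathrm{Cu}(I)\hookrightarrow\mathrm{Cu}(A)$ is an order-embedding, and then to the quotient $D$ because, lifting a relation $(n+1)\braket{\bar x}\leq n\braket{\bar y}$ in $\mathrm{Cu}(D)$ cut-off by cut-off to $(n+1)\braket{(x-\eps)_{+}}\leq n\braket{y}+\braket{j}$ in $\mathrm{Cu}(I)$ with $\braket{j}$ supported in $J$, one has $(n+1)\braket{(x-\eps)_{+}}\leq n(\braket{y}+\braket{j})$, so almost unperforation of $\mathrm{Cu}(I)$ yields $\braket{(x-\eps)_{+}}\leq\braket{y}+\braket{j}$, which maps down to $\braket{(\bar x-\eps)_{+}}\leq\braket{\bar y}$. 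Next, $D$ has no tracial state, since a tracial state on $D$ would pull back to a nonzero bounded trace on $I$; by Haagerup's theorem \citep{Haagerup-Quasitraces-2014} $D$ then has no quasitrace, so the affine bijection $T(D)\cong\Sigma_{\mathrm{lsc}}(\mathrm{W}(D),\braket{1_{D}})$ of Proposition~\ref{prop:sep-family-traces-iff-sep-family-dim-functions} (valid as $D$ is unital and exact) forces $\Sigma_{\mathrm{lsc}}(\mathrm{W}(D),\braket{1_{D}})=\emptyset$; by \citep[Theorem~3.3]{de-Silva-2016} this set is dense in $\Sigma(\mathrm{W}(D),\braket{1_{D}})$, which is therefore also empty. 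By the Goodearl--Handelman criterion stated above, a distinguished order unit with no normalised state must have a properly infinite multiple, so $M_{n}(D)$ is properly infinite for some $n\in\NN$, and Proposition~\ref{prop:regularity-of-prop-inf} upgrades this to proper infiniteness of $D$. The main obstacle is exactly this chain: one must check that exactness and almost unperforation genuinely survive the passages to $I$ and to $D$ before Haagerup's theorem, de Silva's density result, and Proposition~\ref{prop:regularity-of-prop-inf} can all be brought to bear.
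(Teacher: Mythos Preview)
Your proof is correct and follows essentially the same route as the paper: obtain a nonzero ideal $I$ of $A$ carrying no bounded trace, apply the Hirshberg--R{\o}rdam--Winter dichotomy (Theorem~\ref{thm:hirchberg-rordam-winter-stable-hereditary}) to conclude that either $I$ is stable or $I$ has a unital quotient $D=I/J$, observe that $D$ has no tracial state and is a direct summand of $A/J$ (hence a quotient of $A$), and finally deduce that $D$ is properly infinite via exactness, Haagerup, and Proposition~\ref{prop:regularity-of-prop-inf}. The only cosmetic differences are that you construct $I$ explicitly as $\overline{AaA}$ rather than appealing to Proposition~\ref{prop:faithful-trace-iff-separating-family}(iii), and that you reach ``some $M_n(D)$ is properly infinite'' through the chain \emph{no trace $\Rightarrow$ no quasitrace $\Rightarrow$ no lsc dimension function $\Rightarrow$ (de Silva density) no dimension function $\Rightarrow$ (Goodearl--Handelman) some $n\braket{1_D}$ properly infinite}, whereas the paper cites \citep{Haagerup-Quasitraces-2014} directly for this step; your detour is valid (you correctly check that almost unperforation survives to $D$ before invoking de Silva), just slightly longer than needed.
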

\begin{proof}
Suppose that $A$ has no faithful tracial state. By Proposition \ref{prop:faithful-trace-iff-separating-family} there exists an ideal $I$ in $A$ with no tracial state. Note that almost unperforation passes to ideals and quotients. Suppose that $I$ is not stable, then Theorem \ref{thm:hirchberg-rordam-winter-stable-hereditary} implies that $I$ has a unital quotient $I/J$. Observe that if $I/J$ had a tracial state, then so would $I$ via the quotient map $I\to I/J$. Since $I/J$ has no tracial state there exists $n\in \NN$ such that $M_n(I/J)$ is properly infinite \citep{Haagerup-Quasitraces-2014}. It follows from Proposition \ref{prop:regularity-of-prop-inf} that $I/J$ is properly infinite. Now observe that we, from unitality of $I/J$, obtain an isomorphism $A/J \cong A/I \oplus I/J$, and hence $I/J$ is a unital and properly infinite quotient of $A$.
\end{proof}
We cannot immediately turn Proposition \ref{prop:no-faithful-trace} into an equivalent reformulation of admitting a faithful tracial state, since admitting a faithful tracial state does not inhibit the existence of properly infinite quotients; for example, the full group-$C^{\ast}$-algebra $C^{\ast}(\mathbb{F}_2)$ has a faithful tracial state as it is residually finite-dimensional, but any separable $C^{\ast}$-algebra can be realised as a quotient of it. One way of salvaging this is to assume some more properties on $A$, e.g., QTS or stable rank one, which would give a necessary condition for having a faithful tracial state.
\begin{corollary}
Suppose $A$ is a separable, unital, exact $C^{\ast}$-algebra such that $\mathrm{Cu}(A)$ is almost unperforated. Assume that either $A$ has the QTS property, or that $A$ has stable rank one. Then $A$ has a faithful tracial state if and only if $A$ has no stable ideal.
\end{corollary}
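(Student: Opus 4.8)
The plan is to read this off Proposition~\ref{prop:no-faithful-trace} together with Proposition~\ref{prop:faithful-trace-implies-stably-finite-and-no-stable}; no new Cuntz-semigroup input is needed beyond what those results already use, and the real task is simply to check that each of the two extra hypotheses rules out a unital, properly infinite quotient. One implication is immediate: if $A$ has a faithful tracial state then it has a separating family of tracial states, so Proposition~\ref{prop:faithful-trace-implies-stably-finite-and-no-stable} says $A$ has no stable $C^{\ast}$-subalgebra, and in particular no stable ideal.

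For the converse I would argue by contradiction: assume $A$ has no stable ideal but no faithful tracial state. Since $\mathrm{Cu}(A)$ is almost unperforated, Proposition~\ref{prop:no-faithful-trace} applies and yields that $A$ has either a stable ideal or a unital, properly infinite quotient. The first alternative is excluded by hypothesis, so fix an ideal $J$ with $A/J$ unital and properly infinite. The key elementary observation is that such a quotient admits no tracial state: if $s_1, s_2 \in A/J$ are isometries with orthogonal range projections, then a tracial state $\tau$ would force $1 = \tau(1) \geq \tau(s_1 s_1^{\ast}) + \tau(s_2 s_2^{\ast}) = \tau(s_1^{\ast} s_1) + \tau(s_2^{\ast} s_2) = 2$, which is absurd. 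Hence if $A$ has the QTS property we already have a contradiction, since QTS forces $A/J$ to admit a tracial state.

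In the remaining case $A$ has stable rank one. Here I would first record that stable rank one passes to quotients — given $\bar a \in A/J$, lift it to $a \in A$, approximate $a$ by an invertible element of $A$, and push forward along the quotient map — so $A/J$ has stable rank one. But a unital $C^{\ast}$-algebra of stable rank one is finite: an isometry $v$ is a norm-limit of invertibles $x_n$, hence of the unitaries $u_n = x_n (x_n^{\ast} x_n)^{-1/2}$, so $v v^{\ast} = \lim_n u_n u_n^{\ast} = 1$ and $v$ is unitary. This contradicts proper infiniteness of $A/J$. In either case we reach a contradiction, so $A$ has a faithful tracial state, completing the proof.

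I do not expect a serious obstacle: the substantive content is already contained in Proposition~\ref{prop:no-faithful-trace}. The only places demanding a little care are the two structural facts invoked in the last step — that stable rank one descends to quotients, and that a finite unital $C^{\ast}$-algebra cannot be properly infinite — which are exactly the mechanism by which the extra hypothesis does its job, matching the remark preceding the statement that stable rank one renders the "no properly infinite unital quotient" condition vacuous.
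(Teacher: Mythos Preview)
Your argument is correct and follows exactly the paper's approach: apply Proposition~\ref{prop:no-faithful-trace} and observe that neither the QTS property nor stable rank one is compatible with a properly infinite unital quotient. The paper's proof is a single sentence that leaves the verifications you spell out (that properly infinite kills tracial states, that stable rank one passes to quotients and forces finiteness) as implicit standard facts; you have simply supplied those details.
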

\begin{proof}
Apply Proposition \ref{prop:no-faithful-trace} and note that $A$ cannot have any properly infinite quotients, since it either has QTS or stable rank one.
\end{proof}
Since almost unperforation of Cuntz semigroups is easily seen to pass to quotients, we can also apply Proposition \ref{prop:no-faithful-trace} to the quotients and obtain the following result.
\begin{theorem} \label{thm:QFTS-iff-no-prop-inf-and-no-stable}
Let $A$ be a separable, unital, exact $C^{\ast}$-algebra satisfying that $\mathrm{Cu}(A)$ is almost unperforated. Then $A$ has the QFTS property if and only if $A$ has no stable intermediate quotients and no unital, properly infinite quotients.
\end{theorem}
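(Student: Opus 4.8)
The forward implication is immediate from Proposition \ref{prop:QFTS-iff-intermediate-quotients-bdd-trace}: its final sentence already records that a $C^{\ast}$-algebra with the QFTS property has no stable intermediate quotient and no properly infinite quotient, and the latter is a fortiori no \emph{unital} properly infinite quotient. All of the content therefore lies in the converse, which I would obtain by applying Proposition \ref{prop:no-faithful-trace} not only to $A$ but to each of its quotients.

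Thus assume $A$ has no stable intermediate quotient and no unital, properly infinite quotient, fix an arbitrary ideal $I \subseteq A$, and aim to produce a faithful tracial state on $A/I$. First I would verify that the hypotheses of Proposition \ref{prop:no-faithful-trace} are inherited by $A/I$: it is again separable, unital and exact, and $\mathrm{Cu}(A/I)$ is again almost unperforated, since almost unperforation of the Cuntz semigroup passes to quotients (as used in the remark preceding the theorem; concretely, $\mathrm{Cu}(A) \to \mathrm{Cu}(A/I)$ is surjective and one transports the defining implication $(n+1)x \leq ny \Rightarrow x \leq y$ along it). Proposition \ref{prop:no-faithful-trace} then says that, were $A/I$ to have no faithful tracial state, it would contain a stable ideal or admit a unital, properly infinite quotient. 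But every ideal of $A/I$ has the form $J/I$ for an ideal $I \subseteq J \subseteq A$, so a stable ideal of $A/I$ is precisely a stable intermediate quotient of $A$, which is excluded; and every quotient of $A/I$ has the form $A/J$ for an ideal $I \subseteq J \subseteq A$, hence is a quotient of $A$, so a unital, properly infinite quotient of $A/I$ is one of $A$, also excluded. Hence $A/I$ admits a faithful tracial state, and since $I$ was arbitrary, $A$ has the QFTS property.

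The argument is therefore little more than bookkeeping once Proposition \ref{prop:no-faithful-trace} and Theorem \ref{thm:hirchberg-rordam-winter-stable-hereditary} are available. The one step deserving genuine care --- and the only place I anticipate friction --- is the permanence claim that almost unperforation of $\mathrm{Cu}(A)$ (equivalently of $\mathrm{W}(A)$) descends to every quotient; I would either supply the short transport-of-structure argument along the surjection $\mathrm{Cu}(A) \twoheadrightarrow \mathrm{Cu}(A/I)$ or invoke the relevant functoriality from \citep{Antoine-Perera-Thiel-2019}. A secondary, routine point worth stating explicitly is the dictionary identifying ideals of $A/I$ with the ideals of $A$ containing $I$, and quotients of $A/I$ with the quotients of $A$ by ideals containing $I$; this is exactly what lets one re-express the obstructions produced by Proposition \ref{prop:no-faithful-trace} as intermediate quotients and quotients of $A$ in the sense of the statement.
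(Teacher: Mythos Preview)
Your proposal is correct and follows exactly the paper's approach: the paper merely remarks that almost unperforation of the Cuntz semigroup passes to quotients and then applies Proposition \ref{prop:no-faithful-trace} to each quotient $A/I$, which is precisely your argument with the bookkeeping spelled out. One small caution on the step you rightly flag: bare surjectivity of $\mathrm{Cu}(A) \to \mathrm{Cu}(A/I)$ does not by itself let you ``transport'' the implication $(n+1)x \leq ny \Rightarrow x \leq y$, since inequalities in the quotient need not lift to the same preimages; the clean route is your second option, via the description of $\mathrm{Cu}(A/I)$ as a Cu-quotient of $\mathrm{Cu}(A)$ by $\mathrm{Cu}(I)$ in \citep{Antoine-Perera-Thiel-2019}, or a direct argument at the level of positive elements and cut-downs.
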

We now look at some ways of how one may apply the above theorem. Firstly, as mentioned previously, by assuming the QTS property one may disregard the possibility of unital, properly infinite quotients; since we are interested in the existence of \emph{faithful} tracial states on the quotients, this is clearly not a big assumption. There are several classes of $C^{\ast}$-algebras with the QTS property such as the class of exact, unital $C^{\ast}$-algebras with stable rank one, and the class of group-$C^{\ast}$-algebras of amenable groups. In general, unital, nuclear $C^{\ast}$-algebras have the QTS property if and only if they are hypertracial, cf.\ \citep{Bedos-1995-hypertraces}. Assuming almost unperforation on the level of Cuntz semigroups might seem like a strong assumption, but we can luckily invoke various results to weaken this to just $\omega$-comparison on all quotients which, by Robert \citep{Robert-Munster-2011}, is provided given finite nuclear dimension.
\begin{corollary} \label{cor:fin-nuc-QTS-then-QFTS-iff-no-SIQ}
Let $A$ be a separable, unital $C^{\ast}$-algebra with finite nuclear dimension and QTS. Then $A$ has the QFTS property if and only if $A$ has no stable intermediate quotient.
\end{corollary}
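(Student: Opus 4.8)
The plan is to reduce everything to the machinery already built in Section~3 — in particular Theorem~\ref{thm:hirchberg-rordam-winter-stable-hereditary} and the argument of Proposition~\ref{prop:no-faithful-trace} — after checking that finite nuclear dimension supplies all the needed regularity at the level of \emph{every} quotient.

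First I would collect the standard permanence statements. A $C^{\ast}$-algebra of finite nuclear dimension is nuclear, hence exact, and finite nuclear dimension passes to ideals and to quotients; the QTS property trivially passes to quotients, since a quotient of a quotient of $A$ is again a quotient of $A$; and separability and unitality descend to every $A/K$. By Robert's theorem \citep{Robert-Munster-2011}, finite nuclear dimension forces $\mathrm{Cu}$ to have $\omega$-comparison, so $\mathrm{Cu}(A/K)$ has $\omega$-comparison for every ideal $K\subseteq A$. Thus each quotient $A/K$ is a separable, unital, exact $C^{\ast}$-algebra with $\omega$-comparison and the QTS property, which is exactly the setting in which Theorem~\ref{thm:hirchberg-rordam-winter-stable-hereditary} can be applied internally.

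The forward implication, QFTS $\Rightarrow$ no stable intermediate quotient, is immediate from Proposition~\ref{prop:QFTS-iff-intermediate-quotients-bdd-trace}. For the converse, assume $A$ has no stable intermediate quotient and fix an ideal $K\subseteq A$; it suffices to produce a faithful tracial state on $A/K$. If there were none, Proposition~\ref{prop:faithful-trace-iff-separating-family} (applicable as $A/K$ is separable and unital) yields a non-zero ideal of $A/K$ — necessarily of the form $I/K$ with $K\subseteq I\subseteq A$ — admitting no non-zero bounded trace. Now I would rerun the argument of Proposition~\ref{prop:no-faithful-trace} inside $A/K$: the ideal $I/K$ is a hereditary $C^{\ast}$-subalgebra of $A/K$, and since $\mathrm{Cu}(A/K)$ has $\omega$-comparison, Theorem~\ref{thm:hirchberg-rordam-winter-stable-hereditary} forces either that $I/K$ is stable — contradicting the hypothesis, since $I/K$ is an intermediate quotient of $A$ — or that $I/K$ has a unital quotient $I/L$ with $K\subseteq L\subseteq I$. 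In the latter case unitality of $I/L$ gives $A/L\cong A/I\oplus I/L$, so $I/L$ is a quotient of $A$ and hence, by QTS, carries a tracial state; pulling it back along the quotient map $I/K\twoheadrightarrow I/L$ produces a non-zero bounded trace on $I/K$, a contradiction. Hence $A/K$ admits a faithful tracial state, and $A$ has the QFTS property.

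The one point that needs care — and the reason one cannot simply quote Theorem~\ref{thm:QFTS-iff-no-prop-inf-and-no-stable} — is that finite nuclear dimension yields $\omega$-comparison but not almost unperforation, so Proposition~\ref{prop:regularity-of-prop-inf} and the properly-infinite alternative are unavailable. The role of the QTS hypothesis is precisely to make that alternative vacuous: a unital quotient of $A$ automatically has a tracial state, so the unital-quotient branch of Theorem~\ref{thm:hirchberg-rordam-winter-stable-hereditary} collapses at once. The main thing to get right is therefore the bookkeeping of the iterated quotients $K\subseteq L\subseteq I\subseteq A$ and the verification that exactness, $\omega$-comparison, and QTS genuinely descend to each $A/K$; everything after that is a direct transcription of the proof of Proposition~\ref{prop:no-faithful-trace}.
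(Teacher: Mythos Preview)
Your argument is correct, and it is genuinely different from --- in fact more direct than --- the paper's own proof.

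The paper does not work directly with $\omega$-comparison on $A/K$. Instead, given a quotient $A/I$ without a faithful tracial state, it tensors with the Jiang--Su algebra $\mathcal{Z}$ so that $\mathrm{Cu}(A/I\otimes\mathcal{Z})$ is almost unperforated, invokes Proposition~\ref{prop:no-faithful-trace} as a black box to obtain a stable ideal $J\otimes\mathcal{Z}$, and then has to ``untensor'': it argues that $J\otimes\mathcal{Z}$ being stable forces $J$ to have no bounded traces and no unital quotients, and finally appeals to Robert's $\omega$-comparison result together with the Ortega--Perera--R\o rdam stability criterion to conclude that $J$ itself is stable. Your route sidesteps this entirely by observing that Theorem~\ref{thm:hirchberg-rordam-winter-stable-hereditary} already only demands $\omega$-comparison, which Robert's theorem supplies directly for every quotient $A/K$; you then rerun the dichotomy of Proposition~\ref{prop:no-faithful-trace} inside $A/K$ and use QTS to kill the unital-quotient branch immediately, never needing almost unperforation or the $\mathcal{Z}$-tensor trick. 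The paper's approach has the minor expository advantage of quoting Proposition~\ref{prop:no-faithful-trace} verbatim, but yours is shorter and makes clearer exactly which hypothesis is doing which work.
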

\begin{proof}
One direction is immediate, so assume that $A/I$ is a quotient of $A$ without a faithful tracial state. Since $A/I$ is unital, it holds by Proposition \ref{prop:permanence-properties-sep-family} that $A/I\otimes_{\mathrm{min}} \mathcal{Z}$ has no faithful tracial state. Since $A/I\otimes_{\mathrm{min}} \mathcal{Z}$ has QTS and an almost unperforated Cuntz semigroup, it follows from Proposition \ref{prop:no-faithful-trace} that $A/I\otimes_{\mathrm{min}} \mathcal{Z}$ has a stable ideal. We thus find an ideal $J$ in $A/I$ (i.e., an intermediate quotient of $A$) such that $J\otimes_{\mathrm{min}} \mathcal{Z}$ is stable. Since the Cuntz semigroup of $J\otimes_{\mathrm{min}} \mathcal{Z}$ is almost unperforated, it follows from \citep{Hirshberg-Rordam-Winter-2007} that $J\otimes_{\mathrm{min}} \mathcal{Z}$ has no bounded traces and no unital quotients. In particular, neither does $J$, since any such would be easily extended from $J$ to $J\otimes_{\mathrm{min}} \mathcal{Z}$. Since $A$ has finite nuclear dimension, which is preserved by taking ideals and quotients, $J$ has finite nuclear dimension and thus $\omega$-comparison by \citep[Theorem 1]{Robert-Munster-2011}. A direct application of \citep[Proposition 4.7]{Ortega-Perera-Rordam-2011} then proves that $J$ is stable, which finalises the proof.
\end{proof}
It is not immediate how one can use Corollary \ref{cor:fin-nuc-QTS-then-QFTS-iff-no-SIQ} to prove that specific $C^{\ast}$-algebras have the QFTS property, since proving the lack of stable intermediate quotients is a difficult task. However, we can use the result to prove a dichotomy for certain classes of $C^{\ast}$-algebras.
\begin{example}
Let $X$ be a compact Hausdorff space with finite covering dimension and consider an action $\alpha$ of $\ZZ$ on $C(X)$. Since $C(X)$ is Abelian, it follows from \citep[Proposition 3.7]{Bedos-1995-hypertraces} that the crossed product $C(X)\rtimes_{\alpha} \ZZ$ has the QTS property. Moreover, by \citep{Hirshberg-Wu-2017}, $C(X)\rtimes_{\alpha} \ZZ$ has finite nuclear dimension. It thus follows from Corollary \ref{cor:fin-nuc-QTS-then-QFTS-iff-no-SIQ} that $C(X)\rtimes_{\alpha}\ZZ$ has the QFTS property if and only if it has no stable intermediate quotients. In particular, this result holds for groups $G\rtimes_{\alpha} \ZZ$ whenever $G$ is an Abelian group with finite-dimensional Pontryagin dual.
\end{example}
Recall that if $A$ is a unital $C^{\ast}$-algebra with a non-unitary isometry $v$, then we can embed the Toeplitz algebra $\mathcal{T}=C^{\ast}(v)$ in $A$, and we can identify $\mathbb{K}$ as an ideal in $\mathcal{T}$. Therefore, any infinite, unital $C^{\ast}$-algebra contains a stable $C^{\ast}$-subalgebra. By Corollary \ref{cor:fin-nuc-QTS-then-QFTS-iff-no-SIQ}, we may extend this to prove the existence of stable \emph{ideals} of certain $C^{\ast}$-algebras and their quotients.
\begin{example}
Consider the Lamplighter group $G=\ZZ/\ZZ_2 \wr \ZZ$, which is an amenable group and whose $C^{\ast}$-algebra has finite nuclear dimension by \citep{Hirshberg-Wu-2017}. The $C^{\ast}$-algebra $C^{\ast}(G)$ admits an infinite quotient \citep[Corollary 3.5]{Carrion-Dadarlat-Eckhardt-Quasidiagonal-2013} such that is does not have the QFTS property, and Corollary \ref{cor:fin-nuc-QTS-then-QFTS-iff-no-SIQ} then implies the existence of a stable intermediate quotient of $C^{\ast}(G)$.
\end{example}
By the exact same line of reasoning as in Corollary \ref{cor:fin-nuc-QTS-then-QFTS-iff-no-SIQ}, we obtain the following equivalent reformulation of when certain $C^{\ast}$-algebras have faithful tracial states.
\begin{corollary} \label{cor:faithful-trace-iff-no-stable-ideals}
Let $A$ be a separable, unital $C^{\ast}$-algebra. Assume that $\mathrm{Cu}(A)$ has $\omega$-comparison (e.g., $A$ has finite nuclear dimension), and that $A$ has no properly infinite quotients (e.g., $A$ has the QTS property or stable rank one). Then $A$ has a faithful tracial state if and only if $A$ has no stable ideals.
\end{corollary}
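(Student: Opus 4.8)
The plan is to repeat, almost verbatim, the argument of Corollary~\ref{cor:fin-nuc-QTS-then-QFTS-iff-no-SIQ}, but applied to $A$ itself instead of to a quotient: $\omega$-comparison of $\mathrm{Cu}(A)$ plays the role of finite nuclear dimension, and the hypothesis on properly infinite quotients plays the role of QTS. (As throughout this section I assume $A$ exact; this is automatic in the example of finite nuclear dimension.) The "only if" direction is immediate: if $\tau$ is a faithful tracial state on $A$ and $I$ is a nonzero ideal of $A$, then $\tau|_{I}$ is a bounded positive trace on $I$ that is nonzero by faithfulness, so $I$ cannot be stable, since a stable $C^{\ast}$-algebra carries no nonzero bounded trace.

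For the "if" direction I argue by contraposition: assuming $A$ has no faithful tracial state I produce a stable ideal. First pass to $A\otimes_{\mathrm{min}}\mathcal{Z}$. By Proposition~\ref{prop:faithful-trace-iff-separating-family}, $A$ has no separating family of tracial states, so since $\mathcal{Z}$ has a faithful tracial state, Proposition~\ref{prop:permanence-properties-sep-family}(ii) shows $A\otimes_{\mathrm{min}}\mathcal{Z}$ has no separating family of tracial states, hence no faithful tracial state; moreover it is separable, unital, exact, with an almost unperforated Cuntz semigroup. Proposition~\ref{prop:no-faithful-trace} then applies and gives one of two alternatives: either $A\otimes_{\mathrm{min}}\mathcal{Z}$ has a stable ideal, or it has a unital, properly infinite quotient. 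In the first case, since $\mathcal{Z}$ is simple and nuclear the stable ideal is of the form $I\otimes_{\mathrm{min}}\mathcal{Z}$ for an ideal $I$ of $A$; a stable $C^{\ast}$-algebra has no nonzero bounded trace and no nonzero unital quotient, and both properties pass down to $I$ (a bounded trace, resp.\ a unital quotient, of $I$ would tensor up to $I\otimes_{\mathrm{min}}\mathcal{Z}$). As $I$ is a hereditary $C^{\ast}$-subalgebra of $A$ and $\mathrm{Cu}(A)$ has $\omega$-comparison, Theorem~\ref{thm:hirchberg-rordam-winter-stable-hereditary} now shows $I$ is stable, completing this case.

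The second alternative is the main obstacle, and it is here that the no-properly-infinite-quotients hypothesis must be invoked. In this case $A\otimes_{\mathrm{min}}\mathcal{Z}$ has a unital, properly infinite quotient; since $\mathcal{Z}$ is simple and nuclear, every quotient of $A\otimes_{\mathrm{min}}\mathcal{Z}$ has the form $(A/I)\otimes_{\mathrm{min}}\mathcal{Z}$ for an ideal $I$ of $A$, so $(A/I)\otimes_{\mathrm{min}}\mathcal{Z}$ is properly infinite for some such $I$. A unital, properly infinite $C^{\ast}$-algebra admits no tracial state, so $(A/I)\otimes_{\mathrm{min}}\mathcal{Z}$ is traceless, and hence so is the quotient $A/I$ of $A$ (a tracial state on $A/I$ would tensor with the trace on $\mathcal{Z}$ to a tracial state on $(A/I)\otimes_{\mathrm{min}}\mathcal{Z}$). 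I then use the hypothesis to reach a contradiction: if $A$ has QTS then $A/I$ carries a tracial state; while if $A$ has stable rank one, then so does $A/I$, since stable rank does not increase under quotients, whence $A/I$ is stably finite and, being unital and exact, admits a tracial state by Blackadar-Handelman-Haagerup. Either way the second alternative is impossible, so $A$ has a stable ideal. The delicate point I would want to isolate is exactly this reduction: Proposition~\ref{prop:no-faithful-trace} visibly produces only a \emph{traceless} unital quotient of $A$ rather than an evidently properly infinite one, so closing the argument requires reading the no-properly-infinite-quotients hypothesis, in each of its concrete incarnations, as the absence of traceless unital quotients.
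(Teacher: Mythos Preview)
Your proposal is correct and follows precisely the route the paper intends: the paper's proof is literally ``by the exact same line of reasoning as in Corollary~\ref{cor:fin-nuc-QTS-then-QFTS-iff-no-SIQ}'', i.e., tensor with $\mathcal{Z}$, apply Proposition~\ref{prop:no-faithful-trace}, and then descend back to $A$ using Theorem~\ref{thm:hirchberg-rordam-winter-stable-hereditary} with the assumed $\omega$-comparison on $\mathrm{Cu}(A)$. Your closing observation is also well taken: the argument genuinely produces a \emph{traceless} unital quotient of $A$ rather than a visibly properly infinite one, so closing the contradiction needs the concrete incarnations (QTS, or stable rank one plus exactness via Blackadar--Handelman--Haagerup) rather than the bare hypothesis ``no properly infinite quotients''---a subtlety the paper's one-line proof elides.
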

Blackadar showed \citep{Blackadar-Traces-on-AF-1980} that an AF-algebra is stable if and only if no ideals admit a bounded trace. Since being an AF-algebra is preserved by taking ideals, it thus follows from Proposition \ref{prop:faithful-trace-iff-separating-family} that an AF-algebra has a faithful tracial state if and only if it admits no stable ideals. As AF-algebras have finite nuclear dimension (in fact, a $C^{\ast}$-algebra has zero nuclear dimension if and only if it is an AF-algebra \citep{Winter-Zacharias-Nuclear-Dim-2010}) and stable rank one, Corollary \ref{cor:faithful-trace-iff-no-stable-ideals} can be seen as a generalisation of this result. \\ \\
Another use of the QFTS property is to prove strong quasidiagonality of $C^{\ast}$-algebras under the assumption that the UCT problem is true, as we shall demonstrate in this section. Recall that a *-representation $\pi\colon A\to \mathbb{B}(H)$ of a $C^{\ast}$-algebra $A$ is \emph{quasidiagonal} if there exists a net of finite rank projections $P_i$ in $\mathbb{B}(H)$, which strongly converges to the identity and for which $[\pi(a),P_i]\to 0$ for all $a\in A$. A $C^{\ast}$-algebra is then said to be \emph{quasidiagonal} if it admits a faithful quasidiagonal *-representation, and it is \emph{strongly quaisidiagonal} if all *-representations are quasidiagonal. Any quotient of a strongly quasidiagonal $C^{\ast}$-algebra is trivially (strongly) quasidiagonal, but a $C^{\ast}$-algebra whose quotients are all quasidiagonal need not be strongly quasidiagonal \citep[Example 20]{Brown-Dadarlat-1996}. The obstruction is the possible existence of a quotient with a faithful *-representation, which intersects the compacts, and hence the *-representation need not be quasidiagonal by \citep[Theorem 7.2.5]{Brown-Ozawa-2008}. By assuming the QFTS property, we can avoid this issue, and the difficulties now revolve around when $C^{\ast}$-algebras have quasidiagonal quotients. One way of achieving this is by the following, which also uses the tracial states, but with the caveat that the $C^{\ast}$-algebras need to be residually UCT.
\begin{theorem} \label{thm:sep-nuc-QD-faithful-trace-implies-SQD}
Let $A$ be a separable, nuclear, quasidiagonal $C^{\ast}$-algebra. Suppose that $A$ has the QFTS property, and that all quotients of $A$ satisfy the UCT. Then $A$ is strongly quasidiagonal.
\end{theorem}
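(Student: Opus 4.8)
The strategy is to combine the QFTS hypothesis with the celebrated Tikuisis–White–Winter quasidiagonality theorem. Recall that theorem states: any faithful tracial state on a separable, nuclear $C^{\ast}$-algebra satisfying the UCT is quasidiagonal, in the sense that the associated GNS-style representation (more precisely, the conclusion is that the trace is a pointwise limit of finite-dimensional traces, and hence amenable traces with quasidiagonal associated representations). The key reduction is that strong quasidiagonality is equivalent to \emph{every quotient} of $A$ having a faithful quasidiagonal representation, which — since a quotient of a nuclear UCT algebra is again nuclear — will follow once we know every quotient admits a faithful quasidiagonal trace.

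\medskip
\noindent\textbf{Step 1: Reduce to quotients.} First I would recall the standard fact that a separable $C^{\ast}$-algebra $A$ is strongly quasidiagonal if and only if every quotient $A/I$ is quasidiagonal (this is because an arbitrary representation $\pi$ of $A$ factors through $A/\ker\pi$, and one may then pass to the essential part; the subtlety about representations hitting the compacts is precisely what is being sidestepped). So it suffices to prove that every quotient $A/I$ is quasidiagonal. Fix an ideal $I\subseteq A$ and write $B = A/I$. Then $B$ is separable, nuclear (quotients of nuclear algebras are nuclear), and by hypothesis $B$ satisfies the UCT and admits a \emph{faithful} tracial state $\tau$.

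\medskip
\noindent\textbf{Step 2: Apply Tikuisis–White–Winter.} Since $B$ is separable, nuclear, unital, satisfies the UCT, and $\tau$ is a faithful tracial state, the main theorem of \citep{Tikuisis-White-Winter-2017} (quasidiagonality of traces) tells us that $\tau$ is a quasidiagonal trace: there is a sequence of c.p.c.\ maps $\varphi_n\colon B\to M_{k(n)}(\CC)$ which are asymptotically multiplicative, asymptotically isometric on the $\|\cdot\|_2$-level, and satisfy $\mathrm{tr}_{k(n)}\circ\varphi_n\to\tau$ pointwise. From such a sequence one builds a quasidiagonal representation: amplify and take a direct sum of the finite-dimensional representations $\pi_n$ into which the $\varphi_n$ approximately factor, obtaining $\pi=\bigoplus_n\pi_n$ acting on $H=\bigoplus_n H_n$, with the block-diagonal finite-rank projections witnessing quasidiagonality. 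The associated trace on $\pi(B)''$ restricts to $\tau$ on $B$, so since $\tau$ is faithful, $\pi$ is a faithful $\ast$-representation. Hence $B=A/I$ is quasidiagonal.

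\medskip
\noindent\textbf{Step 3: Conclude.} Since the ideal $I$ was arbitrary, every quotient of $A$ is quasidiagonal, and therefore by Step 1, $A$ is strongly quasidiagonal.

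\medskip
\noindent\textbf{Main obstacle.} The genuine content is entirely imported from the Tikuisis–White–Winter theorem; the only real work on our side is the reduction in Step 1 — making precise that "all quotients quasidiagonal $+$ all quotients have faithful traces" upgrades to strong quasidiagonality, and checking that passing to $A/\ker\pi$ and then to the faithful trace does not run into the compacts-in-the-image pathology described in the paragraph preceding the theorem. This is where the QFTS hypothesis (rather than merely "all quotients quasidiagonal") is used: it guarantees a \emph{faithful} trace on each quotient, hence a faithful quasidiagonal representation, which is exactly what is needed to rule out that obstruction. One should also note that quasidiagonality of $A$ itself is among the hypotheses but is in fact subsumed, since $A=A/\{0\}$ is one of the quotients handled above; it is listed only because the conclusion of strong quasidiagonality is vacuous-sounding without it and it matches the form of Li's result being generalized.
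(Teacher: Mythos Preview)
Your overall strategy matches the paper's --- use QFTS plus Tikuisis--White--Winter (the paper actually cites Gabe's version) to get each quotient quasidiagonal, then upgrade to strong quasidiagonality --- and Step~2 is fine. The gap is in Step~1. The ``standard fact'' you invoke, that strong quasidiagonality is equivalent to every quotient being quasidiagonal, is \emph{false}: the paper says so explicitly just before the theorem, citing the Brown--Dadarlat example. You acknowledge the compacts-in-the-image subtlety but do not resolve it; saying that QFTS yields ``a faithful quasidiagonal representation'' of $B=A/\ker\pi$ does not help, because the existence of \emph{some} faithful quasidiagonal representation of $B$ does not force the \emph{given} $\tilde\pi$ to be quasidiagonal.

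The paper fills this gap as follows. First reduce to \emph{irreducible} representations (it suffices to show these are quasidiagonal). For irreducible $\pi$ with kernel $I$, the induced faithful $\tilde\pi\colon A/I\to\mathbb{B}(H)$ is itself irreducible, so either it is essential or $\mathbb{K}(H)$ sits inside $\tilde\pi(A/I)$ as an ideal. In the latter case the faithful tracial state on $\tilde\pi(A/I)$ coming from QFTS restricts to a bounded trace on $\mathbb{K}(H)$, which is impossible. Hence $\tilde\pi$ is essential, and since $A/I$ is quasidiagonal (your Step~2), Voiculescu's theorem makes $\tilde\pi$ quasidiagonal. The reduction to irreducibles is not cosmetic: it is what gives ``hits the compacts $\Rightarrow$ contains $\mathbb{K}(H)$ as an ideal'', and that is precisely where the faithful trace is spent.
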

\begin{proof}
It suffices to show that all irreducible *-representations of $A$ are quasidiagonal Let $\pi\colon A\to \mathbb{B}(H)$ be an irreducible *-representation of $A$ with kernel $I$, and let $\tilde{\pi}\colon A/I \to \mathbb{B}(H)$ be the corresponding faithful *-representation on the quotient. Obviously, $\pi$ and $\tilde{\pi}$ have the same image, and, by assumption, there exists a faithful tracial state on $A/I$, hence also on $\tilde{\pi}(A/I)$. Suppose that $\tilde{\pi}$ is not essential, that is, suppose that $\mathbb{K}(H)\cap \tilde{\pi}(A/I)\neq \{0\}$. As $\tilde{\pi}$ is an irreducible *-representation, this implies that $\mathbb{K}(H)$ is an ideal in $\tilde{\pi}(A/I)$. However, this is impossible, as we would then obtain a bounded trace on $\mathbb{K}(H)$. Therefore, $\tilde{\pi}$ is a faithful and essential *-representation. Since $A$ is assumed to have the QFTS property, there exists a faithful tracial state $\tau$ on $A/I$, which is quasidiagonal by \citep[Theorem 4.1]{Gabe-JoFA-2017}. Therefore, $\tilde{\tau}$ is a faithful quasidiagonal tracial state on $A/I$, and $A/I$ is therefore a quasidiagonal $C^{\ast}$-algebra by \citep[Proposition 1.4]{Tikuisis-White-Winter-Ann-2017}. Since $\tilde{\pi}$ is a faithful, essential *-representation of a quasidiagonal $C^{\ast}$-algebra, it is a quasidiagonal *-representation. Consequently, $\pi$ is a quasidiagonal *-representation, and as $\pi$ was an arbitrary irreducible *-representation of $A$, we conclude that $A$ is strongly quasidiagonal, which completes the proof.
\end{proof}
The assumption of all quotients satisfying the UCT is a strong assumption, and even for relatively well-known classes of $C^{\ast}$-algebras it is unclear when this occurs. In \citep{Eckhardt-Gillaspy-2016} it is proven that it holds true for (primitive) quotients of $C^{\ast}(G)$, whenever $G$ is finitely generated and nilpotent. However, it is seemingly very difficult to extend these results, and the authors of \citep{Eckhardt-Gillaspy-McKenney-2019} argue that resolving it for virtually nilpotent groups might not be any easier than resolving the UCT-problem for nuclear $C^{\ast}$-algebras. If all nuclear $C^{\ast}$-algebras were shown to be in the UCT-class, then the assumption of all quotients satisfying the UCT would, of course, be vacuous in Theorem \ref{thm:sep-nuc-QD-faithful-trace-implies-SQD} for nuclear $C^{\ast}$-algebras, but \citep[Proposition 5.1]{Gabe-JoFA-2017} would also resolve this. In particular, we can for group-$C^{\ast}$-algebras achieve the following, which modulo the UCT assumption includes all previously known examples of strongly quasidiagonal groups.
\begin{proposition}
Let $G$ be a countable, amenable group for which $C^{\ast}(G)$ has the QFTS property. If the UCT conjecture holds true, then $C^{\ast}(G)$ is strongly quasidiagonal.
\end{proposition}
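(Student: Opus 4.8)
The plan is simply to check that $C^{\ast}(G)$ satisfies every hypothesis of Theorem \ref{thm:sep-nuc-QD-faithful-trace-implies-SQD} and then invoke that theorem; the proposition is, in effect, a corollary of it.

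First I would record the standard structural facts. Since $G$ is countable, $C^{\ast}(G)$ is separable; since $G$ is amenable, $C^{\ast}(G)=C^{\ast}_{r}(G)$ is nuclear. Moreover, the full group $C^{\ast}$-algebra of a countable amenable group is quasidiagonal: amenable groups lie in the UCT class, so the canonical faithful trace on $C^{\ast}(G)$ is quasidiagonal by the Tikuisis--White--Winter theorem \citep{Tikuisis-White-Winter-Ann-2017}, whence $C^{\ast}(G)$ is a quasidiagonal $C^{\ast}$-algebra. The QFTS property is assumed outright in the statement.

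The only remaining hypothesis of Theorem \ref{thm:sep-nuc-QD-faithful-trace-implies-SQD} is that every quotient of $C^{\ast}(G)$ satisfies the UCT. Here I would use that a quotient of a separable $C^{\ast}$-algebra is separable and a quotient of a nuclear $C^{\ast}$-algebra is nuclear, so every quotient of $C^{\ast}(G)$ is a separable nuclear $C^{\ast}$-algebra. Hence, under the assumption that the UCT conjecture holds, that is, that every separable nuclear $C^{\ast}$-algebra lies in the UCT class, all quotients of $C^{\ast}(G)$ satisfy the UCT. Applying Theorem \ref{thm:sep-nuc-QD-faithful-trace-implies-SQD} now yields that $C^{\ast}(G)$ is strongly quasidiagonal.

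There is no genuine obstacle to overcome at this stage: the substantive content has already been packaged into Theorem \ref{thm:sep-nuc-QD-faithful-trace-implies-SQD} and into the deep fact that countable amenable groups have quasidiagonal group $C^{\ast}$-algebras. The only point deserving a sentence of care is the passage from the global UCT conjecture to the UCT for all quotients of the single algebra $C^{\ast}(G)$, which is handled by the permanence of separability and nuclearity under quotients noted above.
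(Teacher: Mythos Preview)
Your proposal is correct and matches the paper's intended argument: the proposition is stated there without explicit proof, immediately after the discussion that the UCT conjecture renders the ``residually UCT'' hypothesis of Theorem~\ref{thm:sep-nuc-QD-faithful-trace-implies-SQD} vacuous for nuclear $C^{\ast}$-algebras, so it is meant precisely as the application you carry out. Your verification of the hypotheses (separability from countability, nuclearity from amenability, quasidiagonality via Tikuisis--White--Winter, and UCT for all quotients from the UCT conjecture plus permanence of nuclearity) is exactly what is required.
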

Using the result on QFTS $C^{\ast}$-algebra from Corollary \ref{cor:fin-nuc-QTS-then-QFTS-iff-no-SIQ}, we get the following result.
\begin{corollary} \label{cor:no-stable-intermediate-implies-SQD}
Let $A$ be a separable, unital, quasidiagonal, QTS $C^{\ast}$-algebra with finite nuclear dimension, and assume all quotients of $A$ satisfy the UCT. If $A$ has no stable intermediate quotients, then $A$ is strongly quasidiagonal.
\end{corollary}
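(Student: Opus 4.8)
The plan is simply to chain together Corollary~\ref{cor:fin-nuc-QTS-then-QFTS-iff-no-SIQ} and Theorem~\ref{thm:sep-nuc-QD-faithful-trace-implies-SQD}; all the substantial work has already been carried out, and what remains is to verify that the hypotheses match up. First I would record the elementary observation that finite nuclear dimension implies nuclearity, so that $A$ is a separable, nuclear, quasidiagonal $C^{\ast}$-algebra. This is what will allow us to appeal to Theorem~\ref{thm:sep-nuc-QD-faithful-trace-implies-SQD} at the end, once the QFTS property has been secured.

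Next I would establish that $A$ has the QFTS property. Here the point is that $A$ is separable, unital, has finite nuclear dimension, and has the QTS property, which are precisely the hypotheses of Corollary~\ref{cor:fin-nuc-QTS-then-QFTS-iff-no-SIQ}. That corollary asserts that, under these conditions, $A$ has QFTS if and only if $A$ has no stable intermediate quotient; since the latter is assumed, we conclude that $A$ has the QFTS property. (Implicitly this uses Robert's theorem that finite nuclear dimension forces $\omega$-comparison on all quotients, which is the regularity ingredient feeding Corollary~\ref{cor:fin-nuc-QTS-then-QFTS-iff-no-SIQ}, but that is already baked into the cited statement.)

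Finally, I would apply Theorem~\ref{thm:sep-nuc-QD-faithful-trace-implies-SQD} directly: $A$ is separable, nuclear and quasidiagonal, it has the QFTS property by the previous step, and by hypothesis all quotients of $A$ satisfy the UCT, so the theorem yields that $A$ is strongly quasidiagonal. There is no real obstacle in this argument; the only thing worth double-checking is that "finite nuclear dimension" is indeed strong enough to serve simultaneously as the regularity hypothesis of Corollary~\ref{cor:fin-nuc-QTS-then-QFTS-iff-no-SIQ} and as the nuclearity hypothesis of Theorem~\ref{thm:sep-nuc-QD-faithful-trace-implies-SQD}, which it is. The corollary is thus a straightforward repackaging of the two main results of this section.
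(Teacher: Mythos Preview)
Your proposal is correct and follows exactly the route the paper intends: the corollary is stated without proof precisely because it is obtained by combining Corollary~\ref{cor:fin-nuc-QTS-then-QFTS-iff-no-SIQ} (to upgrade ``no stable intermediate quotients'' to QFTS) with Theorem~\ref{thm:sep-nuc-QD-faithful-trace-implies-SQD}, using that finite nuclear dimension implies nuclearity. There is nothing to add.
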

Proving that a given $C^{\ast}$-algebra is not strongly quasidiagonal often revolves around constructing a quotient with an infinite projection, see e.g.\ \citep[Section 3]{Carrion-Dadarlat-Eckhardt-Quasidiagonal-2013}. Since the existence of an infinite projection in the quotient would imply the existence of a stable $C^{\ast}$-\emph{subalgebra} on the quotient, one can hence view Corollary \ref{cor:no-stable-intermediate-implies-SQD} as a partial converse result to this method.
\section{Just-infinity and quasidiagonality}
In \citep{Grigorchuk-Musat-Rordam-2018}, Grigorchuk, Musat and Rørdam introduced the notion of just-infiniteness for $C^{\ast}$-algebra analogously to the group theoretical property of the same name. We say that a $C^{\ast}$-algebra $A$ is \emph{just-infinite} if it is infinite-dimensional and all its proper quotients are finite-dimensional. While these have a certain almost finite-dimensional flavour, they can admit quite exotic behaviour; for example, any infinite-dimensional and simple $C^{\ast}$-algebra is trivially just-infinite, e.g., the Cuntz algebra $\mathcal{O}_2$. Li initiated in \citep{Li-2019-Arxiv} the analysis of quasidiagonality of just-infinite $C^{\ast}$-algebras and proved, among other things, that quasidiagonality and inner quasidiagonality coincide among separable just-infinite $C^{\ast}$-algebra. This proof uses the classification of just-infinite $C^{\ast}$-algebras of \citep[Theorem 3.10]{Grigorchuk-Musat-Rordam-2018}, but there is an even stronger and more elementary proof without resorting to this classification as seen below.
\begin{theorem} \label{thm:just-infinite-and-qd-implies-sqd}
A just-infinite $C^{\ast}$-algebra is quasidiagonal if and only if it is strongly quasidiagonal.
\end{theorem}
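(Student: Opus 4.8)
Since a strongly quasidiagonal $C^{*}$-algebra has a faithful quasidiagonal representation (indeed, any faithful one), the implication ``strongly quasidiagonal $\Rightarrow$ quasidiagonal'' is immediate and uses nothing about just-infiniteness. For the converse I would fix a just-infinite, separable, quasidiagonal $C^{*}$-algebra $A$ and an arbitrary $*$-representation $\pi\colon A\to\mathbb{B}(H)$, and show $\pi$ is quasidiagonal by distinguishing cases according to $\ker\pi$ and $\pi(A)\cap\mathbb{K}(H)$. \emph{Case 1: $\ker\pi\neq\{0\}$.} Then $A/\ker\pi$ is finite-dimensional by just-infiniteness and $\pi$ factors through it, so it suffices that every representation of a finite-dimensional $C^{*}$-algebra is quasidiagonal — which follows by splitting $H$ into the standard multiplicity subspaces of the matrix summands and using projections of the form (identity on the matrix block)$\,\otimes\,$(finite rank). \emph{Case 2: $\pi$ faithful and essential}, i.e.\ $\pi(A)\cap\mathbb{K}(H)=\{0\}$. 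Choose a faithful quasidiagonal representation $\rho_{0}$ of $A$; then $\rho_{0}^{(\infty)}$ is faithful, essential and quasidiagonal, and by Voiculescu's noncommutative Weyl--von Neumann theorem it is approximately unitarily equivalent to $\pi$; since quasidiagonality of representations is preserved under approximate unitary equivalence, $\pi$ is quasidiagonal.

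The substance lies in \emph{Case 3: $\pi$ faithful but not essential.} Set $J=\{a\in A:\pi(a)\in\mathbb{K}(H)\}$, a nonzero closed ideal of $A$. Since $\pi|_{J}$ is a faithful representation of $J$ with image inside $\mathbb{K}(H)$, $J$ is a $C^{*}$-algebra of compact operators, hence $J\cong\bigoplus_{i}^{c_{0}}\mathbb{K}(H_{i})$ by the structure theorem for such algebras. The canonical $*$-homomorphism $A\to M(J)\cong\prod_{i}\mathbb{B}(H_{i})$ exhibits each $\mathbb{K}(H_{i})$ as an ideal of $A$; playing different summands off against one another via just-infiniteness ($A/\mathbb{K}(H_{1})$ must be finite-dimensional, and symmetrically) forces a single summand, and since $A$ is infinite-dimensional while $A/J$ is finite-dimensional (or $0$), $J$ must be infinite-dimensional, so in fact $J\cong\mathbb{K}(\mathcal{H}_{0})$ for a separable infinite-dimensional $\mathcal{H}_{0}$. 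Now decompose $\pi=\pi_{1}\oplus\pi_{2}$ along $H_{1}=\overline{\pi(J)H}$ and $H_{1}^{\perp}$, both $\pi(A)$-invariant because $J\trianglelefteq A$. The summand $\pi_{2}$ annihilates $J$, hence factors through the finite-dimensional $A/J$ and is quasidiagonal by Case 1. On $H_{1}$ the ideal $J$ acts non-degenerately, so by simplicity of $\mathbb{K}(\mathcal{H}_{0})$ we may write $H_{1}\cong\mathcal{H}_{0}\otimes L$ with $\pi_{1}|_{J}\cong\mathrm{id}_{\mathbb{K}(\mathcal{H}_{0})}\otimes 1_{L}$; since $\pi_{1}(A)$ idealizes $\pi_{1}(J)$ and the idealizer of $\mathbb{K}(\mathcal{H}_{0})\otimes 1_{L}$ in $\mathbb{B}(\mathcal{H}_{0}\otimes L)$ is exactly $\mathbb{B}(\mathcal{H}_{0})\otimes 1_{L}$, it follows that $\pi_{1}=\tilde{\pi}_{1}\otimes 1_{L}$ for a $*$-representation $\tilde{\pi}_{1}\colon A\to\mathbb{B}(\mathcal{H}_{0})$ with $\mathbb{K}(\mathcal{H}_{0})=\tilde{\pi}_{1}(J)\subseteq\tilde{\pi}_{1}(A)$. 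It therefore suffices to show $\tilde{\pi}_{1}$ is quasidiagonal.

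To finish Case 3, compose $\tilde{\pi}_{1}$ with the quotient $\mathbb{B}(\mathcal{H}_{0})\to\mathcal{Q}(\mathcal{H}_{0})$ onto the Calkin algebra: this kills $J$, so by just-infiniteness its image $F\subseteq\mathcal{Q}(\mathcal{H}_{0})$ is finite-dimensional. Since every $*$-homomorphism from a finite-dimensional $C^{*}$-algebra into a Calkin algebra lifts to $\mathbb{B}(\mathcal{H}_{0})$ (equivalently, $\mathrm{Ext}$ of a finite-dimensional $C^{*}$-algebra vanishes), the inclusion $F\hookrightarrow\mathcal{Q}(\mathcal{H}_{0})$ lifts to a $*$-homomorphism $\sigma\colon F\to\mathbb{B}(\mathcal{H}_{0})$; then $\nu:=\sigma\circ(\text{Calkin map})\circ\tilde{\pi}_{1}$ is a representation of $A$ with $\ker\nu\supseteq J\neq\{0\}$, hence quasidiagonal by Case 1, and $\tilde{\pi}_{1}(a)-\nu(a)\in\mathbb{K}(\mathcal{H}_{0})$ for all $a\in A$. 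Since $\|[c,P_{n}]\|\to 0$ whenever $c$ is compact and $(P_{n})$ is a sequence of finite-rank projections converging strongly to $1$, choosing such $P_{n}$ that quasidiagonalise $\nu$ gives $\|[\tilde{\pi}_{1}(a),P_{n}]\|\le\|[\nu(a),P_{n}]\|+\|[\tilde{\pi}_{1}(a)-\nu(a),P_{n}]\|\to 0$. Hence $\tilde{\pi}_{1}$ is quasidiagonal, and therefore so are $\pi_{1}=\tilde{\pi}_{1}\otimes 1_{L}$ (use $P_{n}\otimes Q_{m}$ for finite-rank $Q_{m}\uparrow 1_{L}$) and $\pi=\pi_{1}\oplus\pi_{2}$.

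The step I expect to be the main obstacle is the structural analysis opening Case 3: deducing from the mere existence of a faithful non-essential representation that $J=\pi^{-1}(\mathbb{K}(H))$ collapses to a single elementary algebra $\mathbb{K}(\mathcal{H}_{0})$ with $A/J$ finite-dimensional, and then recognising the resulting $\tilde{\pi}_{1}$ — morally the identity representation of $\mathbb{K}$ ``fattened'' by a finite-dimensional quotient — as one that can be quasidiagonalised by the Calkin-lifting plus compact-perturbation device. The remaining ingredients — finite-dimensional $C^{*}$-algebras are strongly quasidiagonal, Voiculescu's theorem, invariance of quasidiagonality of representations under approximate unitary equivalence and under countable direct sums, and vanishing of $\mathrm{Ext}$ for finite-dimensional $C^{*}$-algebras — are all standard.
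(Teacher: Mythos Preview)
Your proof is correct, but you work much harder in Case~3 than necessary, and the paper's route is considerably shorter. Once you have established that $J\cong\mathbb{K}(\mathcal{H}_{0})$ is an ideal of $A$ with $A/J$ finite-dimensional, you are already done: $A$ is then an extension of the AF-algebra $\mathbb{K}$ by a finite-dimensional $C^{*}$-algebra, and since the class of AF-algebras is closed under extensions, $A$ itself is AF and hence strongly quasidiagonal. This single observation disposes of \emph{every} representation of $A$ at once---no decomposition $\pi=\pi_{1}\oplus\pi_{2}$, no tensor factorisation $\tilde\pi_{1}\otimes 1_{L}$, no Calkin lifting, no compact-perturbation estimate. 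The paper takes exactly this shortcut, and moreover restricts from the outset to \emph{irreducible} representations, where ``faithful and non-essential'' immediately gives $\mathbb{K}(H)\subseteq\pi(A)$, so the structural analysis of $J$ that opens your Case~3 collapses to one line.

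What your longer approach buys is self-containment: you handle arbitrary (not necessarily irreducible) faithful representations directly rather than relying on the reduction to irreducibles, and your Calkin-lifting plus compact-perturbation device is in effect a hands-on proof that extensions of $\mathbb{K}$ by finite-dimensional algebras are strongly quasidiagonal, avoiding any appeal to the AF extension theorem. So the argument is sound and the ingredients are all standard, but the ``main obstacle'' you flag is not really an obstacle---it dissolves the moment you recognise $A$ as AF.
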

\begin{proof}
It is clear that strongly quasidiagonal $C^{\ast}$-algebras are quasidiagonal, so suppose that $A$ is a quasidiagonal $C^{\ast}$-algebra. Observe that all non-faithful *-representations of $A$ are quasidiagonal since all the proper quotients of $A$ are finite-dimensional. Hence we only need to prove quasidiagonality of faithful irreducible *-representations. So assume that $\pi\colon A \to \mathbb{B}(H)$ is a faithful, irreducible *-representation, then there are two possibilities: Either $\pi$ is essential, or $\mathbb{K}(H)$ is an ideal in $\pi(A)$. If $\pi$ is essential, then as $A$ is quasidiagonal and $\pi$ is faithful, $\pi$ is quasidiagonal. So suppose that $\mathbb{K}(H)$ is an ideal in $\pi(A)$. Then either $\pi(A)\cong \mathbb{K}(H)$, or $\pi(A)$ is an extension of the AF-algebra $\mathbb{K}(H)$ and the finite-dimensional $C^{\ast}$-algebra $\pi(A)/\mathbb{K}(H)$. Consequently, $\pi(A)$ is an AF-algebra and, thus, strongly quasidiagonal. Hence $A$ is strongly quasidiagonal (and, in fact, an AF-algebra).
\end{proof}
\begin{corollary}
Let $G$ be a group such that $C^{\ast}(G)$ is just-infinite. Then $C^{\ast}(G)$ is strongly quasidiagonal.
\end{corollary}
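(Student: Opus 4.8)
The plan is to reduce the statement to the preceding Theorem~\ref{thm:just-infinite-and-qd-implies-sqd}. Since that theorem asserts that a just-infinite $C^{\ast}$-algebra is strongly quasidiagonal as soon as it is quasidiagonal, it is enough to prove that $C^{\ast}(G)$ is quasidiagonal.

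First I would observe that just-infiniteness of $C^{\ast}(G)$ already forces $G$ to be amenable. Indeed, $C^{\ast}(G)$ is infinite-dimensional, so $G$ is infinite and hence the group algebra $\CC[G]$ is infinite-dimensional; since the left regular representation is injective on $\CC[G]$, the canonical quotient map $C^{\ast}(G)\to C^{\ast}_{r}(G)$ has infinite-dimensional image. But every proper quotient of the just-infinite algebra $C^{\ast}(G)$ is finite-dimensional, so this map cannot be a proper quotient; that is, its kernel is $\{0\}$. Hence $C^{\ast}(G)\to C^{\ast}_{r}(G)$ is an isomorphism, which holds precisely when $G$ is amenable.

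The remaining step is to invoke the resolution of Rosenberg's conjecture by Tikuisis, White and Winter \citep{Tikuisis-White-Winter-Ann-2017}: the full group $C^{\ast}$-algebra of a discrete amenable group, which here coincides with the reduced one, is quasidiagonal. Combined with the reduction above and Theorem~\ref{thm:just-infinite-and-qd-implies-sqd}, this yields that $C^{\ast}(G)$ is strongly quasidiagonal.

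I do not anticipate a serious obstacle here: the work is carried entirely by the two cited inputs, and the only new content is the short amenability argument. The one technical point worth a remark is separability. Either one notes that just-infiniteness of $C^{\ast}(G)$ also forces $G$ to be a just-infinite group --- each nontrivial normal subgroup $N$ of $G$ gives a quotient $C^{\ast}(G)\twoheadrightarrow C^{\ast}(G/N)$, which is infinite-dimensional unless $G/N$ is finite --- and then applies the appropriate formulation of Rosenberg's conjecture; or, more robustly, one uses the local (Voiculescu-type) characterisation of quasidiagonality, which passes to the inductive limit $C^{\ast}(G)=\varinjlim_{H}C^{\ast}(H)$ over countable subgroups $H\leq G$ and makes Theorem~\ref{thm:just-infinite-and-qd-implies-sqd} applicable with no countability assumption.
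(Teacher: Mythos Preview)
Your proposal is correct and follows essentially the same route as the paper: show that the canonical surjection $C^{\ast}(G)\to C^{\ast}_r(G)$ must be an isomorphism by just-infiniteness, conclude amenability, invoke Tikuisis--White--Winter for quasidiagonality, and then apply Theorem~\ref{thm:just-infinite-and-qd-implies-sqd}. Your additional remarks on separability are a welcome refinement that the paper's proof leaves implicit.
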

\begin{proof}
Consider the canonical surjection $C^{\ast}(G)\to C^{\ast}_{r}(G)$. As $C^{\ast}(G)$ is just-infinite, we must either have that this is an isomorphism or that $C^{\ast}_{r}(G)$ is finite-dimensional (in which case the surjection is also an isomorphism). In particular, $G$ is amenable and hence quasidiagonal by \citep[Corollary C]{Tikuisis-White-Winter-Ann-2017}. Therefore, by Theorem \ref{thm:just-infinite-and-qd-implies-sqd}, $C^{\ast}(G)$ is strongly quasidiagonal.
\end{proof}
Following the terminology of \citep[Theorem 3.10]{Grigorchuk-Musat-Rordam-2018}, any just-infinite $C^{\ast}$-algebra of type ($\gamma$) is residually-finite dimensional and, hence, strongly quasidiagonal by Theorem \ref{thm:just-infinite-and-qd-implies-sqd}. Since type ($\alpha$) consists of all simple and infinite-dimensional $C^{\ast}$-algebras, there are plenty of quasidiagonal and non-quasidiagonal examples in this type. It is unknown which just-infinite $C^{\ast}$-algebras of type ($\beta$) are quasidiagonal, but the following trivial observation, which is contained in the proof of Theorem \ref{thm:just-infinite-and-qd-implies-sqd}, provides a subclass of quasidiagonal just-infinite $C^{\ast}$-algebras.
\begin{proposition}
If $A$ is a non-simple just-infinite $C^{\ast}$-algebra which has a non-essential irreducible *-representation, then $A$ is of type ($\beta$) and (strongly) quasidiagonal.
\end{proposition}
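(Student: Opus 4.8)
The plan is to read the ideal structure of $A$ off the hypothesis. Let $\pi\colon A\to\mathbb{B}(H)$ be a non-essential irreducible $*$-representation, so $\pi(A)\cap\mathbb{K}(H)\neq\{0\}$. Since $\pi$ is irreducible, the standard argument --- the ideal of $\pi(A)$ generated by any non-zero compact operator together with irreducibility already contains every rank-one projection --- shows that $\mathbb{K}(H)\subseteq\pi(A)$ and that $\mathbb{K}(H)$ is an ideal of $\pi(A)\cong A/\ker\pi$. I would first reduce to the case that $\pi$ is faithful: if $\ker\pi\neq\{0\}$, then $A/\ker\pi$ is a non-zero proper quotient of the just-infinite algebra $A$, hence finite-dimensional, which forces $\pi(A)$ and $H$ to be finite-dimensional, and then the representation carries no information; the substance of the statement --- and the situation appearing inside the proof of Theorem~\ref{thm:just-infinite-and-qd-implies-sqd} --- is that $\pi$ is faithful, equivalently $H$ is infinite-dimensional. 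So we identify $A=\pi(A)\subseteq\mathbb{B}(H)$ with $\mathbb{K}(H)$ a non-zero ideal.

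Next, $\mathbb{K}(H)$ is proper since $A$ is not simple, so $A/\mathbb{K}(H)$ is a non-zero proper quotient of the just-infinite algebra $A$ and is therefore finite-dimensional; thus $A$ sits in an extension
\[ 0\longrightarrow\mathbb{K}(H)\longrightarrow A\longrightarrow F\longrightarrow 0 \]
with $F$ a finite-dimensional $C^{\ast}$-algebra (incidentally $\mathbb{K}(H)$ is then the unique minimal non-zero, essential ideal of $A$, already giving $A$ the shape of a type-($\beta$) algebra). The one external input is that such an extension is an AF-algebra --- a standard fact, and precisely the one invoked inside the proof of Theorem~\ref{thm:just-infinite-and-qd-implies-sqd} --- so $A$ is AF, in particular quasidiagonal, and then Theorem~\ref{thm:just-infinite-and-qd-implies-sqd} (or simply that AF-algebras are strongly quasidiagonal) shows $A$ is strongly quasidiagonal.

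Finally, to place $A$ in type~($\beta$): being non-simple, $A$ is of type~($\beta$) or type~($\gamma$), and type~($\gamma$) means residually finite-dimensional; but $A$ contains the ideal $\mathbb{K}(H)$ with $H$ infinite-dimensional, and $\mathbb{K}(H)$, being simple and infinite-dimensional, has no non-zero finite-dimensional representation, hence is not residually finite-dimensional, while residual finite-dimensionality passes to $C^{\ast}$-subalgebras. So $A$ is not residually finite-dimensional, hence of type~($\beta$). I do not expect a real obstacle: as the author says this is a ``trivial observation'', and the only steps needing care are the reduction to an infinite-dimensional (faithful) representation and the invocation that an extension of a finite-dimensional algebra by $\mathbb{K}(H)$ is AF; once the extension $0\to\mathbb{K}(H)\to A\to F\to 0$ is identified, both strong quasidiagonality and membership in type~($\beta$) follow from material already in the paper.
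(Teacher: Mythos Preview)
Your argument is correct and follows exactly the route the paper indicates --- it gives no separate proof, only the remark that the proposition is ``contained in the proof of Theorem~\ref{thm:just-infinite-and-qd-implies-sqd}'': obtain $\mathbb{K}(H)$ as an ideal in $\pi(A)$, use just-infiniteness to make $A/\mathbb{K}(H)$ finite-dimensional, conclude $A$ is AF and hence strongly quasidiagonal. Your careful reduction to a faithful $\pi$ (observing that a non-faithful irreducible representation of a non-simple just-infinite algebra is automatically finite-dimensional, hence trivially non-essential, so carries no content) and your explicit argument that $\mathbb{K}(H)\subseteq A$ obstructs residual finite-dimensionality --- ruling out type~($\gamma$) --- supply details the paper leaves entirely implicit.
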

It is worth noting that, since separable just-infinite $C^{\ast}$-algebras are primitive \citep[Lemma 3.2]{Grigorchuk-Musat-Rordam-2018}, the existence of a faithful irreducible *-representation is guaranteed. Hence one only needs to examine this representation to determine strong quasidiagonality of just-infinite $C^{\ast}$-algebras: If it is non-essential \emph{or} it is essential \emph{and} quasidiagonal, the $C^{\ast}$-algebra is strongly quasidiagonal. We can summarise this in the following corollary, which gives the quasidiagonality dichotomy of just-infinite $C^{\ast}$-algebras.
\begin{corollary}
Let $A$ be a separable just-infinite $C^{\ast}$-algebra with a faithful irreducible *-representation $\pi\colon A\to \mathbb{B}(H)$. Then $A$ is strongly quasidiagonal if and only if $\pi$ is quasidiagonal.
\end{corollary}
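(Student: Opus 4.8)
The plan is to deduce this directly from Theorem~\ref{thm:just-infinite-and-qd-implies-sqd} together with the very definition of quasidiagonality, so almost no new work is required. The ``only if'' direction is immediate: if $A$ is strongly quasidiagonal, then by definition every $*$-representation of $A$ is quasidiagonal, and in particular so is $\pi$.

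For the ``if'' direction, I would argue as follows. Suppose $\pi$ is quasidiagonal. Since $\pi$ is also faithful by hypothesis, $A$ admits a faithful quasidiagonal $*$-representation, which is precisely the definition of $A$ being a quasidiagonal $C^{\ast}$-algebra. At this point I invoke Theorem~\ref{thm:just-infinite-and-qd-implies-sqd}, which says that a just-infinite $C^{\ast}$-algebra is quasidiagonal if and only if it is strongly quasidiagonal; hence $A$ is strongly quasidiagonal, as desired.

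The main obstacle --- indeed essentially the only content --- has already been absorbed into Theorem~\ref{thm:just-infinite-and-qd-implies-sqd}, whose proof carries out the relevant case analysis (the representation is either essential, or has $\mathbb{K}(H)$ as an ideal and the image is then an AF-algebra). The present corollary is thus a repackaging statement: it records that, for a separable just-infinite $C^{\ast}$-algebra, strong quasidiagonality is detected by the single faithful irreducible $*$-representation whose existence is guaranteed by primitivity \citep[Lemma 3.2]{Grigorchuk-Musat-Rordam-2018}. The only minor point worth noting is that ``quasidiagonal'' for a $*$-representation means the same thing regardless of whether the representation is irreducible, so no reconciliation of definitions is needed when passing between the hypothesis on $\pi$ and the definition of a quasidiagonal $C^{\ast}$-algebra.
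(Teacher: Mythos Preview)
Your proof is correct and matches the paper's approach: the corollary is stated there as a summary of the preceding discussion without a separate proof, and that discussion reduces everything to Theorem~\ref{thm:just-infinite-and-qd-implies-sqd} exactly as you do. Your packaging of the ``if'' direction---$\pi$ faithful and quasidiagonal $\Rightarrow$ $A$ quasidiagonal by definition $\Rightarrow$ $A$ strongly quasidiagonal by the theorem---is in fact slightly cleaner than the paper's informal case split (non-essential versus essential), but the content is the same.
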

\renewcommand{\bibname}{References}
\bibliography{qfts-bibtex} 
\bibliographystyle{abbrv}

\vspace{1cm}

\noindent Henning Olai Milhøj \\
Department of Mathematical Sciences\\
University of Copenhagen\\ 
Universitetsparken 5, DK-2100, Copenhagen \O\\
Denmark \\
milhoj@math.ku.dk\\

\end{document}